\newtheorem{Lemma}{Lemma}[section]
\newtheorem{Theorem}[Lemma]{Theorem}
\newtheorem{Proposition}[Lemma]{Proposition}
\newtheorem{Corollary}[Lemma]{Corollary}
\newtheorem{remark}[Lemma]{Remark}
\newtheorem{definition}[Lemma]{Definition}
\newtheorem{example}[Lemma]{Example}
\newtheorem{Fact}[Lemma]{Fact}
	\newcommand{\C}{\mathbb{C}}
 \def\bt{\begin{Theorem}}
\def\et{\end{Theorem}}
\def\bl{\begin{Lemma}}
\def\el{\end{Lemma}}
\def\bp{\begin{Proposition}}
\def\ep{\end{Proposition}}
\def\bcor{\begin{Corollary}}
\def\ecor{\end{Corollary}}
\def\bpf{\begin{proof}}
\def\epf{\end{proof}}
\def\brem{\begin{remark}}
\def\erem{\end{remark}}
\def\bedef{\begin{definition}\rm }
\def\endef{\end{definition}}
\def\beg{\begin{example}}
\def\eeg{\end{example}}
\def\bef{\begin{Fact}}
\def\eef{\end{Fact}}
\def\bc{\begin{center}}
\def\ec{\end{center}}
\def\beq{\begin{equation}}
\def\eeq{\end{equation}}
\def\beqarray{\begin{eqnarray*}}
\def\eeqarray{\end{eqnarray*}}
\def\<{\leftangle}
\def\>{\rightangle}
\def\({\left(}
\def\){\right)}
\def\<{\langle}
\def\>{\rangle}
\def\a{\alpha}
\def\t{\tau}
\def\e{\varepsilon}
\def\O{\Omega}
\def\z{\zeta}
\def\w.r.t.{with respect to}
\def\R{{\mathbb{R}}}
\def\N{{\mathbb{N}}}
\def\Z{{\mathbb{Z}}}
\def\C{{\mathbb{C}}}
\def\bq{\begin{quote}}
\def\eq{\end{quote}}
\def\bit{\begin{itemize}}
\def\eit{\end{itemize}}
\def\i{\item}
\def\ben{\begin{enumerate}}
\def\een{\end{enumerate}}
\begin{document}

\title{ \vspace{-1.2cm} \bf On the dimension of the \texorpdfstring{$p$}{p}-Bergman spaces\rm}
\author{Shreedhar Bhat and Achinta Kumar Nandi}
\date{}

\maketitle

\begin{abstract}
The investigation of the dimension of Bergman spaces has long been a central topic in several complex variables, uncovering profound connections with potential theory and function theory since the pioneering work of Carleson, Wiegerinck, and others in the 1960s. We investigate the dimension of $p$-Bergman spaces associated with pseudoconvex domains in $\mathbb{C}^n$. By constructing $L^p$-versions of the extension theorems of Ohsawa and Ohsawa-Takegoshi, we establish several geometric and potential-theoretic criteria that ensure the spaces are infinite-dimensional. Sufficient conditions for the infinite dimensionality of $p$-Bergman spaces of complete N-circled fibered Hartogs domains, balanced domains, and weighted $p$-Fock spaces are obtained by applying the mentioned $L^p$-analogs of extension theorems and generalizing a sufficient condition of Jucha.

\end{abstract}

\renewcommand{\thefootnote}{\fnsymbol{footnote}}
\footnotetext{\hspace*{-7mm} 
\begin{tabular}{@{}r@{}p{16.5cm}@{}}
& Keywords. $p$-Bergman space, Wiegerinck Problem.\\
& Mathematics Subject Classification. Primary 32A36; Secondary 32A70; \end{tabular}}
\section{Introduction} 

For a domain (open connected set)  $\O$ in $\C^n$ and for $p\in [1,\infty)$, the weighted $p$-Bergman space comprises holomorphic functions that are $L^p$ integrable with respect to the weight function $\varphi$ on $\O$,  
$$
A^p(\O,\varphi)=\set{f:\O\rightarrow\C\big\vert \,f\text{ is holomorphic in }\O \,\text{ and }\int_\O \abs{f}^pe^{- p \varphi} <\infty }.
$$
If the weight function is uniform, i.e. $\varphi\equiv 0$, then we just refer to it as $p$-Bergman space, denoted $A^p(\O)$. When $p=2$, this space coincides with the classical Bergman space that possesses a Hilbert structure and a reproducing kernel. For $p\neq2$, $A^p(\O)$ retains rich geometric and functional analytic properties but lacks a Hilbert space structure, making the study of its dimensionality and its basis particularly complex. One of the major themes in Bergman theory is the Wiegerinck problem \cite{wiegerinck1984domains}, which, in its classical form, asks whether the non-triviality of a domain’s Bergman space implies infinite dimensionality—a dichotomy that always holds in one complex variable but is known to fail in higher dimensions. Several works have contributed to the $p=2$ case, such as Wiegerinck’s counterexamples \cite{wiegerinck1984domains}, Gallagher’s explorations of infinite dimensionality in various settings \cite{gallagher2017dimension}, \cite{gallagher2022dimension}, B{\l}ocki, Pflug, Zwonek and their coauthors \cite{blocki2020generalizations}, \cite{pflug2017h}, \cite{WlodzimierzZwonek2000} (also see the references therein). The purpose of this paper is to systematically investigate to what extent the dichotomy of dimensions persists for $p$-Bergman spaces. More specifically, it asks under what conditions on the domain and the integrability parameter $p$, the $p$-Bergman space can contain nontrivial holomorphic functions or be rich enough to have infinitely many linearly independent functions. This problem intertwines theory from functional analysis and potential theory, and remains challenging especially in the setting of non-Hilbert space (i.e. $p\neq 2$).

The present work is devoted to questions of a similar nature concerning the dimension of 
$p$-Bergman spaces on pseudoconvex domains. The dimension of the Bergman space 
has been extensively studied for various classes of pseudoconvex domains. In particular, 
Jucha \cite{jucha2012remark} and Boudreaux \cite{boudreaux2021dimension} investigated 
the Bergman spaces of Hartogs domains with planar and higher-dimensional bases. 
A fundamental result due to Jucha \cite{jucha2012remark} establishes a necessary and 
sufficient condition for the infinite dimensionality of the Bergman space of Hartogs 
domains $D_{\phi}(\mathbb{C})$ with base $\mathbb{C}$, thereby precisely proving the 
\textit{dichotomy of dimensions}. The phenomenon of non-trivial Bergman spaces being infinite dimensional is referred as dichotomy of dimension in the literature (see \cite{jucha2012remark}, \cite{gallagher2017dimension}, \cite{pflug2017h}). Boudreaux, in turn, obtained several sufficient 
conditions for infinite dimensionality by means of extension theorems. Related problems 
have also been studied by Pflug and Zwonek \cite{pflug2017h}, who analyzed balanced 
domains and, among other significant contributions, gave a complete characterization of 
the balanced domains inside $\mathbb{C}^2$ whose Bergman spaces are trivial. We consider questions of a similar spirit for $p$-Bergman spaces.

The dimension of Bergman spaces of planar domains are thoroughly understood. Wiegerinck \cite{wiegerinck1984domains} proved that Bergman spaces of planar domains are either trivial or infinite dimensional. Carleson \cite{Carleson1998SelectedPO} characterized domains with infinite dimensional Bergman spaces in terms of the logarithmic capacity of the complement. These equivalent characterizations of a planar domain $\Omega$ with infinite dimensional Bergman space are summarized (see Theorem $1.4$ of \cite{gallagher2021closed}) as
\begin{enumerate}
    \item  Bergman space $A^2(\Omega)$ is nontrivial,
    \item Bergman space is an infinite dimensional Banach space,
    \item logarithmic capacity of the complement of $\O$ is positive, i.e. $c_0( \Omega^c) >0$,
    \item there exists smooth, strictly subharmonic function $\varphi$ (i.e. $\Delta \varphi > 0$) that is bounded on $\O$.
\end{enumerate}
Several authors obtained sufficient conditions for infinite dimensionality of the Bergman spaces of planar domains (see for instance, Theorem~1.3 in \cite{gallagher2021closed}). The dichotomy of dimension has also been studied for Bergman spaces of holomorphic sections of domains in $\mathbb{P}^1$, and for bundle-valued Bergman spaces of compact Riemann surfaces by Sz\H{o}ke \cite{szHoke2022theorem}, Gallagher-Gupta-Vivas \cite{gallagher2022dimension}, \cite{Gallagher2024}.

Similar questions for $p$-Bergman spaces of planar domains have been considered before, and are perhaps known to experts. To make the article self-contained we study the dimension of $p$-Bergman spaces of planar domains in section $\ref{sec:planar}$. We summarize the sufficient criteria characterizing domains with infinite-dimensional $p$-Bergman space in the following theorem, presented as theorem \ref{th:main.planar}:\\

\textbf{Theorem \ref{th:main.planar}.}
Let $\Omega \subset \mathbb{C}$ be a domain. Then the following statements hold:
\begin{enumerate}
    \item For $p\geq 2$,  $A^p(\Omega)$ is either trivial or it is an infinite-dimensional Banach space.
    \item  \textbf{Carleson\cite{Carleson1998SelectedPO}:} Suppose that  $c_{2-q} (\Omega^c) > 0$  then $A^p(\Omega)$ is non-trivial, where $\frac{1}{p} + \frac{1}{q}=1$ and $p > 2$.
    \item Suppose there exists subharmonic  $\phi: \Omega \to [-\infty,0)$ with $\phi - \e_0\norm{\cdot}^2 \in SH(\Omega)$ for some $\e_0>0$ and $\{z \in \Omega: \nu(\phi,z)=0\}$ contains a non-empty open subset of $\Omega$, where $\nu(\phi,z)$ is the Lelong number of $\phi$ at $z$. Then for $1 \leq p <2$, $A^p(\Omega)$ is infinite dimensional.
\end{enumerate}   
A complete description of $p$-Bergman spaces of planar domains is given next. Non-trivial $p$-Bergman spaces of planar domains are shown to be infinite dimensional when $p \geq 2$. In the remaining case $1 \leq p <2$, planar domains with $k$-dimensional $p$-Bergman spaces are constructed in theorem \ref{th:planardomain}, where is an arbitrary positive integer $k$.
 
The remaining part of the paper concerns $p$-Bergman spaces of pseudoconvex domains in higher dimensions. Apart from certain specific families of pseudoconvex domains little is known about the \textit{dichotomy of dimension} of their Bergman spaces. A reasonable approach to the problem is perhaps by characterizing pseudoconvex domains with infinite dimensional Bergman space. Much effort has been dedicated in this pursuit and several significant geometric, function theoretic sufficient conditions for infinite dimensionality have been obtained (see \cite{gallagher2017dimension}, \cite{gallagher2021closed}, \cite{gallagher2022dimension}, \cite{Gallagher2024}, \cite{boudreaux2021dimension}, \cite{jucha2012remark}, \cite{borichev2021dimension}, etc). A common theme in finding sufficient conditions for infinite dimensionality is to make use of extension theorems to relate the Bergman spaces of co-dimension one hyperplanes, or hypervarities inside the pseudoconvex domains, with the Bergman space of the domain. H\"ormander-Skoda-Bombieri's result, extension theorems in the spirit of Ohsawa-Takegoshi thus become indispensable tools in the pursuit of sufficient conditions for infinite dimensionality. $L^p$-analogs of H\"ormander-Bombieri-Skoda theorem (see proposition \ref{prop:pSkoda}), a version of the Ohsawa-Takegoshi extension theorem due to Dinew \cite{dinew2007ohsawa} (see theorem \ref{th:pOT}), and extension theorems of Ohsawa (\cite[Theorem $1.1$]{ohsawa2017extension},  \cite[Theorem $4.1$ ]{ohsawa2017extensionVIII}) are established in the first part of section $3$ (see theorems \ref{th:pOhsawa1}, and \ref{th:pOhsawa2} respectively). We set up the following existence and extension results with $L^p$-estimates to obtain sufficient conditions for infinite dimensionality of $p$-Bergman spaces. We provide new and elementary proofs of the results above. Our methods are inspired by Chen \cite{Chen2021OnTheory} and Xiong's \cite{xiong2023minimal} idea of considering \textit{minimal} extensions. Our motivation for proving the extension theorems with $L^p$-estimates is to use them to compute $p$-Bergman spaces of unbounded pseudoconvex domains. The consequences of the $L^p$-analogs of extension results enable us to compare the $p$-Bergman spaces of pseudoconvex domains with the $p$-Bergman spaces of hyperplanes and hypersurfaces. See corollaries \ref{cor:pOT}, \ref{cor:pOT2}, \ref{cor:Ohsawa1}. The restriction to $p\in[1,2]$ is motivated by the technical and conceptual necessity: for $p>2$, the tools that underlie many classical results, such as the Ohsawa-Takegoshi extension theorem, Skoda's theorem, etc, become unavailable or lack appropriate analogs. These results are instrumental in constructing nontrivial holomorphic functions with integrability properties, which are especially crucial in proving infinite dimensionality results, and their absence for $p>2$ compels us to limit the scope.

The remaining part of the paper is dedicated to examining $p$-Bergman spaces of special classes of pseudoconvex domains, namely Hartogs domains with one and higher dimensional bases. That is, Hartogs domains with base $G$ are domains of the form
$$
D_{\phi}(G):= \{(z,w) \in \mathbb{C}^M \times \mathbb{C}^N: \|w\| < e^{-\phi(z)} \},
$$
where $G \subset \mathbb{C}^M$ is pseudoconvex, $\phi \in PSH(G)$. Several conditions on the base domain and on the plurisubharmonic weight are obtained that are sufficient for the infinite dimensionality of the $p$-Bergman space of $D_{\phi}(G)$. It is shown that $A^p(D_{\phi}(G))$ can separate points when $\phi - c\|.\|^2 \in PSH(G)$, and its Lelong number $\nu(\phi,.)$ vanishes on some open subset $U$ of $G$ (see theorem \ref{th:point-separating}). The $L^p$-analogs of extension theorems enable us to compare $A^p(D_{\phi}(G))$ with $p$-Bergman spaces $A^p(D_{\phi}(G \cap H))$, where $H$ is a hyperplane or a hypersurface (see corollary \ref{cor:pOTH}, \ref{cor:Ohsawa1}, \ref{cor:Ohsawa2}). Jucha in \cite{jucha2012remark} introduces a necessary and sufficient condition for the infinite dimensionality of Bergman spaces of $D_{\phi}(\mathbb{C})$. Remarkably, he is able to show the failure to meet the condition forces $A^2(D_{\phi}(\mathbb{C}))=\{0\}$, thus proving the dichotomy of dimensions for $A^2(D_{\phi}(\mathbb{C}))$. In contrast to this, we show that failure to meet these conditions do not ensure triviality of $p$-Bergman spaces, when $p\in [1,2)$. The sufficient condition in proposition $3.4$ \cite{jucha2012remark} encapsulates the essence: if the plurisubharmonic weight $\phi$ grows faster than the sum of a plurisubharmonic function $u$ and $M' \log \|z\|$ for an appropriate constant $M'$ near infinity (that is, outside some compact set), and if $u$ has a milder logarithmic leading term singularity than $\phi$ in a compact set, then non-trivial holomorphic functions can be constructed inside $D_{\phi}(\mathbb{C})$. Additional requirements on the Riesz decomposition of the measure $\Delta \phi$ (see condition ($4.2$) in \cite{jucha2012remark}) ensure the existence of non-trivial elements in $A^2(\mathbb{C}, (N+|n|)\phi)$ for infinitely many multi-indices $n \in \mathbb{N}^N$.  We adapt the idea of proposition $3.4$ in \cite{jucha2012remark} and obtain sufficient conditions (\ref{eq:generalJucha}) and (\ref{eq:condnpFock}) that result in the infinite dimensionality of $A^p(D_{\phi}(G))$ and the weighted $p$-Fock space (see theorems \ref{th:generalJucha}, \ref{th:pFock}). As a consequence of theorem \ref{th:generalJucha}, we show that $A^p(D_{\phi}(G))$ is infinite dimensional whenever $\phi - c \|.\|^2 \in PSH(G)$ on some pseudoconvex $G \subset \mathbb{C}^M$ (see corollary \ref{cor:generalJucha}). We also observe that some results of Jucha \cite{jucha2012remark} and Pflug-Zwonek \cite{pflug2017h} do not translate in case of  $p$-Bergman space ($1\leq p<2$). See examples \ref{ex:onedimHartogs}, \ref{ex:elbalanced}.

We study the infinite dimensionality of $p$-Fock-type spaces $\mathcal{F}^p_{\phi}$ in section $\ref{sec:pFock}$. The dimensionality of the weighted Fock space of entire functions is motivated by questions in quantum mechanics, particularly by the study of zero modes, which are eigenfunctions with zero eigenvalues. Haslinger \cite{haslinger2017complex}, Shigekawa \cite{shigekawa1991spectral}, and Borichev-Le-Youssfi \cite{borichev2021dimension} have previously considered dimensionality questions for such spaces. We define weighted $p$-Fock spaces $\mathcal{F}^p_{\phi}$ to be the Banach space of weighted $L^p$-integrable entire functions, with the weights often not required to be plurisubharmonic. Generalizing Jucha's condition $4.2$ in this scenario, condition (\ref{eq:condnpFock}) is obtained and shown to be sufficient for the infinite dimensionality of $\mathcal{F}^p_{\phi}$. An $L^p$-analog of Borichev-Le-Youssfi result \cite[Theorem 3.1]{borichev2021dimension}, and $L^p$-analog of Haslinger's result \cite[Theorem 11.20]{haslinger2017complex}, is proved as a consequence of condition (\ref{eq:condnpFock}), for $1\leq p \leq 2$. Lastly, in section \ref{sec:Reinhardt}, we construct examples of complete Reinhardt domains inside $\mathbb{C}^n$ with finite dimensional $p$-Bergman spaces.

\section{\texorpdfstring{$p$}{p}-Bergman Spaces of Planar Domains}\label{sec:planar}

Suppose $\Omega$ is a non-empty domain inside $\mathbb{C}$. We shall investigate the dimensions of the $p$-Bergman spaces $A^p(\Omega)$, for $1 \leq p < \infty$. Let us start by recalling the notions of logarithmic capacity $c_0(\Omega^c)$ and Riesz $\alpha$-capacity $c_{\a}(\Omega^c)$. Let $K \subset \Omega^c$ be compact, and $\mu$ be a probability measure supported on $K$. The logarithmic energy on $K$ is defined as
$$
I_0(\mu) := \int_K \int_K \log \frac{1}{\norm{z-w}} \,d\mu(z) \,d\mu(w),
$$
and when there is one $\mu$ with finite energy, define
$$
c_0(K) := \exp \left( -\inf_{\mu \in \mathcal{P}(K)} I_0(\mu) \right),
$$
$\mathcal{P}(K)$ being the set of Borel probability measures supported on $K$. If no such finite measure exists, $c_0(K)=0$. We define the logarithmic capacity on $\Omega^c$ as
$$
c_0(\Omega^c):= \sup\{c_0(K): K \subset \Omega^c, K \text{ compact} \}.
$$

The Riesz $\alpha$-capacity for $\alpha \in (0,2)$ is defined in a similar manner. The $\alpha$-energy of a finite positive Borel measure $\mu$ supported on $K$ is defined as
$$
I_{\alpha}(\mu) := \int_K \int_K  \frac{1}{\norm{z-w}^\a} \,d\mu(z) \,d\mu(w),
$$
the $\alpha$-capacity of $K$,
$$
c_{\alpha}(K) := \sup_{\mu \in \mathcal{P}(K)} \frac{1}{I_{\alpha}(\mu)},
$$
and $\a$-capacity of $\O^c$ is given by  
$$
c_{\alpha}(\Omega^c):= \sup\{c_{\alpha}(K): K \subset \Omega^c, K \text{ compact} \}.
$$
Following is the main theorem of this section:

\bt \label{th:main.planar}
Let $\Omega \subset \mathbb{C}$ be a domain. Then the following statements hold:
\begin{enumerate}
    \item For $p\geq 2$,  $A^p(\Omega)$ is either trivial or it is an infinite-dimensional Banach space.
    \item  \textbf{Carleson\cite{Carleson1998SelectedPO}:} Suppose that  $c_{2-q} (\Omega^c) > 0$  then $A^p(\Omega)$ is non-trivial, where $\frac{1}{p} + \frac{1}{q}=1$ and $p > 2$.
    \item Suppose there exists subharmonic  $\phi: \Omega \to [-\infty,0)$ with $\phi - \e_0\norm{\cdot}^2 \in SH(\Omega)$ for some $\e_0>0$ and $\{z \in \Omega: \nu(\phi,z)=0\}$ contains a non-empty open subset of $\Omega$, then for $1 \leq p <2$, $A^p(\Omega)$ is infinite dimensional.
\end{enumerate}
\et
The proof will be a combination of two results proved later in this section. Note that, the third result in the theorem above is almost relating to non-triviality of $A^2(\Omega)$ with the infinite dimensionality of $A^p(\Omega)$. For when the Bergman space of $\Omega$ is non-trivial, using Carleson's result it can be shown that there exists a smooth strictly subharmonic function $\phi$ on $\Omega$, and point-separating functions can be constructed as an application of H\"ormander's theorem. 

Relating to Wiegerinck's work, a natural question to investigate is whether there exists planar domains with finite dimensional $p$-Bergman spaces. Restricting to the case $1\leq p <2$ it is shown that there are planar domains $\Omega_k$ with $k$-dimensional $p$-Bergman spaces. In contrast, for $p \geq 2$, all non-trivial $p$-Bergman spaces of planar domains must be infinite dimensional. Although this result is known, we include the proofs for sake of completeness.
\begin{Theorem}\label{th:planardomain}
The dimensionality of $p$-Bergman space of planar domains exhibits the following behavior :  
\begin{itemize}
    \item For $1 \leq p < 2$ and for each $k \in \mathbb{N}$, there exists a domain $\Omega_k \subset \mathbb{C}$ such that $\dim A^p (\Omega_k) = k$.
    \item For $ p\geq 2$ and $\Omega \subset \mathbb{C}$, $A^p(\Omega)$ is either trivial or it is an infinite dimensional Banach space.
\end{itemize}
\end{Theorem}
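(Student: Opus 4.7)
My plan is to treat the two bullet points separately, since they require rather different techniques.

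\textbf{First bullet ($1 \le p < 2$).} I would take
\[
\Omega_k \;:=\; \mathbb{C} \setminus \{a_1, \ldots, a_{k+m}\},
\]
where the $a_i$ are distinct and $m = 2$ if $p = 1$, $m = 1$ if $1 < p < 2$. The goal is to describe $A^p(\Omega_k)$ exactly via Laurent data. At each puncture $a_i$, local $L^p$-integrability in area rules out essential singularities and permits a pole of order $n$ only when $np < 2$; for $p \ge 1$ this means at most a simple pole. Writing $f = \sum_{i=1}^{k+m} c_i/(z - a_i) + g$ with $g$ entire, the expansion at infinity reads
\[
f(z) = \sum_{n \ge 0} g_n z^n + \frac{\sum_i c_i}{z} + \frac{\sum_i c_i a_i}{z^2} + O(z^{-3}),
\]
and since no non-negative power of $z$ is area-$L^p$ near $\infty$, all $g_n$ must vanish, so $g \equiv 0$. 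Similarly, $z^{-j}$ is area-$L^p$ at infinity iff $jp > 2$, so the coefficient of $z^{-1}$ must always vanish, and that of $z^{-2}$ must additionally vanish when $p = 1$. These $m$ linear conditions on the $k + m$ unknowns $c_i$ are independent because the constraint matrix is the initial Vandermonde slab in the distinct $a_i$, giving $\dim A^p(\Omega_k) = k$.

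\textbf{Second bullet ($p \ge 2$).} If $\Omega$ is bounded, then $\{z^n\}_{n \ge 0} \subset A^p(\Omega)$ already provides an infinite-dimensional subspace. Assume $\Omega$ is unbounded and fix nonzero $f \in A^p(\Omega)$. The first step is to show $c_0(\Omega^c) > 0$. Suppose for contradiction that every compact $K \subset \Omega^c$ is polar. The Havin-Maz'ya removability theorem says such a $K$ is removable for $A^2_{\mathrm{loc}}$, and for $p \ge 2$ the finite-area inclusion $L^p \hookrightarrow L^2$ on bounded pieces lifts this to $A^p_{\mathrm{loc}}$. Choosing local enlarged neighborhoods in which each compact slice of $\Omega^c$ sits properly, and then patching the resulting local extensions by analytic continuation on the dense open set $\Omega$, produces an entire function in $A^p(\mathbb{C})$, which must vanish by the subharmonic mean-value argument, contradicting $f \ne 0$.

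Given $c_0(\Omega^c) > 0$, the Carleson equivalences recalled just before Theorem~\ref{th:main.planar} furnish a bounded, smooth, strictly subharmonic function $\phi_0$ on $\Omega$. For an arbitrary $N$, fix distinct $z_1, \ldots, z_N \in \Omega$ and cutoffs $g_k \in C_c^\infty(\Omega)$ with $g_k(z_j) = \delta_{jk}$. Applying the $L^p$-H\"ormander-Skoda-Bombieri theorem (Proposition~\ref{prop:pSkoda}) with the singular weight
\[
\phi_0(z) + (N+1)\sum_{j=1}^{N} \log|z - z_j|^2
\]
solves $\bar\partial u_k = \bar\partial g_k$ with an $L^p$-bound, and the non-integrability of the weight near each $z_j$ forces $u_k(z_j) = 0$. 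Then $f_k := g_k - u_k$ lies in $A^p(\Omega)$ and satisfies $f_k(z_j) = \delta_{jk}$, so the $f_k$ are linearly independent. Letting $N \to \infty$ gives $\dim A^p(\Omega) = \infty$.

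\textbf{Main anticipated obstacle.} The delicate step is the removability/gluing argument for $p \ge 2$: one must carefully select enlarged open neighborhoods so each compact polar slice of $\Omega^c$ sits properly inside an open set where Havin-Maz'ya applies, and verify that the local extensions agree by analytic continuation on the dense open $\Omega$ to define a genuine entire function. Once removability and the Carleson production of $\phi_0$ are in place, the remainder is a routine application of the $L^p$-H\"ormander apparatus recorded in Section~3.
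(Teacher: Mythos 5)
Your first bullet follows essentially the same route as the paper: delete finitely many points, show local $L^p$-integrability forces at most simple poles, and kill the low-order Laurent coefficients at infinity. Your bookkeeping is in fact more careful than the paper's at $p=1$: the paper uses $k+1$ punctures and the generators $\frac{1}{z-a_0}-\frac{1}{z-a_j}=\frac{a_0-a_j}{(z-a_0)(z-a_j)}$, which decay like $|z|^{-2}$ and hence are \emph{not} area-integrable at infinity when $p=1$ (one needs $2p>2$); your extra puncture and the additional linear condition $\sum_i c_i a_i=0$ repair exactly this. One small presentational point: you cannot literally argue that ``$g$ is entire and $L^p$ at infinity'' by subtracting $\sum_i c_i/(z-a_i)$ from $f$, since that sum is itself not $L^p$ at infinity; the clean way, as in the paper, is to run the Cauchy-estimate argument directly on the Laurent expansion of $f$ at $\infty$.

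The genuine gap is in the second bullet for $p>2$. Proposition~\ref{prop:pSkoda} and the whole $L^p$-H\"ormander/Skoda apparatus of Section~3 are stated and proved only for $1\le p<2$: the proof is a H\"older interpolation \emph{downward} from the $L^2$ estimate, and the paper explicitly notes that these tools ``become unavailable or lack appropriate analogs'' for $p>2$. Concretely, solving $\bar\partial u_k=\bar\partial g_k$ with a weighted $L^2$ bound does not give $u_k\in L^p(\Omega)$ for $p>2$ on an unbounded domain (an $L^2$ holomorphic function need not be $L^p$ there), so you cannot certify $f_k=g_k-u_k\in A^p(\Omega)$; even at $p=2$ your weight $(N+1)\sum_j\log|z-z_j|^2$ must have its logarithmic poles localized by cutoffs as in Theorem~\ref{th:point-sep}, since otherwise the weight decays at infinity and the weighted estimate does not control the unweighted norm. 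The paper sidesteps all of this by bootstrapping from the given nonzero $f$ itself: after the inversion $z\mapsto 1/z$ it either takes powers of the Cauchy transform of $\Omega^c$ (when a rational element exists, forcing $\Omega^c$ to have positive area) or forms the difference quotients $\bigl(g(z)-g(z_j)\bigr)/(z-z_j)$ and solves a homogeneous linear system to annihilate successive Laurent coefficients at infinity, producing infinitely many independent elements of $A^p$ with no $\bar\partial$-solving at all. Your removability step (a polar complement forces $A^p(\Omega)=\{0\}$ for $p\ge 2$) is plausible but rests on an uncited removability theorem, and in the paper's argument it is not needed.
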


We will prove this theorem in the next few propositions. Note that $A^p(\mathbb{C} \setminus \{a\})$ is trivial for $a \in \mathbb{C}$, whenever $1 \leq p < \infty$. Define $\Omega_k := \mathbb{C} \setminus A_{k+1}$ where $A_{k+1}$ is a finite subset containing $k+1$ elements with $k \geq 1$.
\begin{Proposition}
    For $p\in [1,2) $, $A^p(\Omega_k)$ is a $k$-dimensional Banach space.
\end{Proposition}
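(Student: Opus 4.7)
The plan is to give an explicit description of $A^p(\Omega_k)$ via Laurent analysis at each puncture $a_j$ and at infinity, and then to show that the resulting parameter space is cut out by exactly one linear condition. Write $A_{k+1} = \{a_0, \ldots, a_k\}$. Every $f \in A^p(\Omega_k)$ should decompose uniquely as $f = h + g$ with $g(z) = \sum_{j=0}^{k} c_j/(z-a_j)$ and $h$ entire: expanding $f$ in a Laurent series around each $a_j$ and using that $\int_{|z-a_j|<1} |z-a_j|^{-np}\,dA$ is finite iff $n < 2/p \leq 2$, only a simple pole can survive at each $a_j$, and the usual Mittag--Leffler type decomposition then yields the stated form globally.

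Next, I would argue that $h \equiv 0$ by a sub-mean-value argument. Since $h$ is entire, $|h|^p$ is subharmonic, and
$|h(z_0)|^p \leq (\pi R^2)^{-1}\int_{B(z_0, R)} |h|^p \, dA$. Using $|h|^p \lesssim |f|^p + |g|^p$, the first piece is bounded by $\|f\|_{L^p(\Omega_k)}^p < \infty$. For the second, splitting the integration into small disks around each $a_j$ (where each $c_j/(z-a_j)$ is $L^p$ since $p<2$) and the bulk $\{|z|\geq R_0\}$ (where $|g(z)| \lesssim 1/|z|$), one gets $\int_{B(z_0,R)} |g|^p\,dA = O(R^{2-p})$. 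Dividing by $\pi R^2$ and letting $R \to \infty$ forces $h(z_0) = 0$, hence $h \equiv 0$.

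Finally, I would extract the constraint on $(c_0, \ldots, c_k)$ imposed by $L^p$-integrability near infinity. The expansion $g(z) = \sum_{m \geq 1} A_{m-1}/z^m$, with $A_m = \sum_{j} c_j a_j^m$, has leading term $(\sum_j c_j)/z$. Since $1/z$ is not $L^p$ at infinity for $p \leq 2$, the single linear condition $\sum_j c_j = 0$ is necessary; once it holds, $g \sim A_1/z^2$ at infinity, which is $L^p$ at infinity for $p > 1$ (the poles already being $L^p$-integrable). Thus the map $(c_0, \ldots, c_k) \mapsto \sum c_j/(z-a_j)$ identifies $A^p(\Omega_k)$ with the hyperplane $\{\sum c_j = 0\} \subset \mathbb{C}^{k+1}$, giving $\dim A^p(\Omega_k) = k$.

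The main obstacle is the vanishing of the entire part $h$: one must carefully control the $L^p$-norm of $g$ on a growing ball, simultaneously handling the local singularities and the slow $1/|z|$ decay at infinity. A genuine subtlety arises at the endpoint $p = 1$: after imposing $A_0 = 0$, the next term $A_1/z^2$ still fails $L^1$-integrability at infinity (since $\int r^{-2}\cdot r\,dr$ diverges), which would force the extra constraint $\sum c_j a_j = 0$ and lower the dimension to $k - 1$. The argument above therefore works cleanly for $p \in (1, 2)$, and the case $p = 1$ would naturally suggest taking a larger puncture set such as $\mathbb{C} \setminus A_{k+2}$ to still realize dimension $k$.
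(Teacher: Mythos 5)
Your argument is correct on $p\in(1,2)$ and takes a genuinely different route from the paper's. The paper exhibits the explicit functions $f_j=(z-a_0)^{-1}-(z-a_j)^{-1}$ and checks their integrability near the punctures to get $\dim\geq k$; for the upper bound it estimates Laurent coefficients at each $a_i$ (ruling out poles of order $\geq 2$), writes $f=g/\prod(z-a_{i_j})$ with $g$ entire, uses Cauchy-type estimates at infinity to force $\deg g\leq l-2$, and finishes by partial fractions. You instead split $f$ into its principal parts plus an entire function $h$, kill $h$ by the sub-mean-value inequality for $|h|^p$ over growing balls (the estimate $\frac{1}{\pi R^2}\int_{B(z_0,R)}|g|^p\,dA=O(R^{-p})\to 0$ is right), and then read off the linear constraints on $(c_0,\dots,c_k)$ from integrability at infinity, identifying $A^p(\Omega_k)$ with the hyperplane $\{\sum c_j=0\}$. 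Your route has the advantage of making the behaviour at infinity explicit; the paper's verification that $f_j\in A^p(\Omega_k)$ only integrates near the poles and silently omits the contribution from a neighbourhood of infinity.

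That omission is exactly where your endpoint observation bites, and it is a correct criticism of the statement as written. Since $f_j(z)=(a_j-a_0)/((z-a_0)(z-a_j))\sim Cz^{-2}$ at infinity and $\int_R^\infty r^{-2}\cdot r\,dr$ diverges, none of the paper's basis functions lies in $A^1(\Omega_k)$. For $p=1$ integrability at infinity requires the expansion of $g$ to begin at $z^{-3}$, which imposes the two independent conditions $\sum_j c_j=0$ and $\sum_j c_j a_j=0$, so $\dim A^1(\Omega_k)=k-1$ rather than $k$. The proposition, and the paper's proof of it, are therefore valid only for $p\in(1,2)$; your suggested fix of enlarging the puncture set (or simply restating the result for $p\in(1,2)$) is the right repair.
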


\begin{proof}
Assume that $A_{k+1}:= \{a_0, a_1, \dots, a_{k}\}$, where $a_j \in \mathbb{C}$, is a finite set with $k+1$ elements. Define
$$
f_j(z) = \frac{1}{z-a_0} - \frac{1}{z-a_j}\, ;\hspace{3mm} 1 \leq j \leq k.
$$
First, we shall show that $f_j \in A^p(\Omega_k)$ for all $1 \leq j \leq k$.

Suppose $r > 0$ be such that $|a_i - a_j| > 2r$ for all $i,j$. Then it is enough to establish that $f_j \in A^p(B(z_1, r)\setminus \set{z_j})$ or equivalently $f_j \in A^p(B(z_j,r))$. Consider
\begin{align*}
    \int_{B(a_0,r)} |f_j|^p \,d\lambda^2 &= \int_{B(a_0,r)} \frac{|a_0-a_j|^p}{|(z-a_0)(z-a_j)|^p} \,d\lambda^2 \\
    &\leq \frac{|a_0 - a_j|^p}{r^p} \int_{B(a_0,r)} \frac{1}{|z-a_0|^p} \,d\lambda^2 \\
    &= 2 \pi M_j \left[ \frac{s^{2-p}}{2-p} \right]_0^r = \frac{2 \pi r^{2-p} M_j}{2-p} < \infty,
\end{align*}
where $M_j = \frac{|a_0 - a_j|^p}{r^p}$. Since the $\displaystyle{\set{f_j}_{j=1}^k}$ is linearly independent, the above computation illustrates $\dim A^p(\Omega_k) \geq~k$. 

Lastly to see that $A^p(\O_k)$ is a $k$ dimensional space, consider $f \in A^p(\Omega_k)$, and $\displaystyle{f(z) = \sum\limits_{-\infty}^{\infty} b^i_j (z-a_i)^j}$
be the Laurent expansion of $f$ in a neighbourhood of $a_i$. Note that, for $j<0$,
\begin{align*}
    b^i_{j}&=\dfrac{1}{2\pi }\int_{0}^{2\pi} f(a_i+re^{it}) (re^{it})^{-j} dt,\\
    {b^i_{j}} \int_{0}^\e rdr&=\dfrac{1}{2\pi } \int_0^\e \int_{0}^{2\pi}f(a_i+re^{it}) (re^{it})^{-j} rdr dt, \\
    \abs{b^i_{j}} \e^2 & \leq  \norm{f}_{A^p(B(a_i, \e))} \cdot \left[\int_{0}^\e r^{(-qj+1)} dr \right]^{1/q}\leq  \norm{f}_{A^p(\O)} \left[\dfrac{\e^{(-qj+2)}}{-qj+2} \right]^{1/q},\\
    \abs{b^i_{j}} &\leq C\norm{ f}_{A^p(\O)} \e^{-j-2+2/q}.
\end{align*}
The inequality is true for all $\e>0$. Therefore setting $\e\rightarrow 0$, $j\leq -2 \implies $ $\abs{b^i_{j}}=0$. \\
Therefore $f$ has at most simple poles at $\displaystyle{\{a_i\}_{i=0}^{k}}$, i.e. $f$ can be written as  $$f(\z)=\dfrac{g(\z)}{\prod\limits_{i=i_1}^{i_l}(z-a_i)}\, ,$$ where $g$ is an entire function and $a_{i_1},a_{i_2}, \cdots, a_{i_l}$ are the poles of $f$, and $2\leq l \leq k$.\\
Further, we claim that $g$ is a polynomial of degree at most $l-2$.
To prove this, observe that  
\begin{align*}\dfrac{d^jg}{dz^j} (0) & =\dfrac{1}{2\pi j!}\int_{0}^{2\pi} g(re^{it}) (re^{it})^{-j} dt,\\
\abs{\dfrac{d^j g}{dz^j}(0)} \int_0^R rdr &\simeq  \abs{ \int_0^R \int_0^{2\pi } \dfrac{g(re^{it})}{(re^{it}-a_{i_1})(re^{it}-a_{i_2})\cdots (re^{it}-a_{i_l}) } r^l (re^{it})^{-j} rdr dt },\\
\abs{\dfrac{d^jg}{dz^j}(0)}&\leq C\dfrac{2}{R^2} \norm{f}_{A^p(\O_k)} \left[\int_{0}^R r^{lq-jq+1}dr\right]^{1/q}\simeq \norm{f}_{A^p(\O_k)} R^{l-j+2/q-2}.
\end{align*} 
The inequality is true for all $R>0$. Therefore setting $R\rightarrow \infty$, $j\geq l-1 \implies $ $\dfrac{d^j g}{dz^j}=0$ i.e. $g$ is a polynomial of degree at most $l-2$. Now, using partial fractions, we obtain $\set{B_j}$ uniquely such that $$f=\sum_{j=1}^l B_j \left[ \dfrac{1}{z-a_0}-\dfrac{1}{z-a_{i_j}}\right].$$\\
\end{proof}

\brem 
The above result says that for a planar domain $\O$ and $p\in [1,2)$,  $A^p(\O)$ is nontrivial if and only if $\O$ is hyperbolic.
\erem

We now approach the $p$-analog of Wiegerinck's result, for $p \geq 2$. That is, we establish that non-trivial $p$-Bergman spaces of planar domains are in fact infinite dimensional, for $p \geq 2$.

Given any planar domain $\Omega$ containing the origin, the linear fractional transformation $\displaystyle{\Psi(z) = \frac{1}{z}}$ maps $\Omega$ to a neighborhood of $\infty$ in the Riemann sphere. We notice that $A^p(\Omega) \cong A^p(\Psi(\Omega), \log \abs{z}^4)$. Indeed, for any $g \in A^p(\Omega)$,
$$
\int_{\Omega} |g(z)|^p \,d\lambda^2 = \int_{\Psi(\Omega)} |g(\Psi^{-1}(\z))|^p |\z|^{-4} \,d\lambda^2
$$
making the assignment $g \mapsto g\circ\Psi^{-1}$ an invertible linear map from $A^p(\Omega)$ to the weighted space $A^p(\Omega, \log |z|^4)$. Hence for our purposes, it is enough to show that every non-trivial weighted $p$-Bergman space $A^p(\Psi(\Omega), \log |z|^{4})$ is infinite-dimensional to settle the case when the planar domain $\Omega$ does not contain a neighborhood of {infinity}. The remaining case requires a similar argument which we consider afterwards.

\bp
Every non-trivial weighted $p$-Bergman space $A^p(\Omega, \log \abs{z}^4)$ is infinite-dimensional whenever $p \geq 2$, and $\Omega$ contains a neighborhood of infinity in the Riemann sphere.
\ep

\begin{proof}
Firstly, we consider the case when $A^p(\Omega, \log|z|^{4})$ contains a non-trivial rational function $g$. Since the integral
$$
\int_{\mathbb{C}} |g(z)|^p |z|^{-4} \,d\lambda^2 
$$
diverges, the complement $\Omega^c$ has positive two-dimensional Lebesgue measure. Note that there exists $R > 0$ large enough such that $\Omega^c$ is properly contained in the ball $B(0, R)$.

Considering the Cauchy transform on $\Omega^c$
$$
h(z) = \int_{\Omega^c} \frac{1}{\zeta - z} \,d\lambda^2,
$$
for all $z \in \Omega$, we obtain a non-trivial bounded holomorphic function on $\Omega$. Note that by construction $0 \in \Omega^c$, and there exists $R > 0$ large enough such that $\partial B(0,R) \subset \Omega$. 
We may decompose the integral on the set $\Omega$ as follows
$$
\int_{\Omega} |h(z)|^p |z|^{-4} \,d\lambda^2 = \int_{\Omega \cap B(0,R)} |h(z)|^p |z|^{-4} \,d\lambda^2 + \int_{\Omega \cap B(\infty, R)} |h(z)|^p |z|^{-4} \,d\lambda^2,
$$
where $B(\infty,R)=\{\abs{w}>R\}$. Using the boundedness and continuity of $h$ and $\frac{1}{z}$ on $\Omega \cap B(0.R)$ the first integral on the right-hand side is seen to be convergent. Also, since both $h(z) \to 0$ and $\frac{1}{z} \to 0$ as $|z| \to \infty$, the next integral is also finite. Thus, the Cauchy transform $h$ lies in the weighted $p$-Bergman space $A^p(\Omega, \log \abs{z}^4)$. Using the same calculation as above, the functions $h^2, h^3, \dots, h^n, \dots$ lie in the space $A^p(\Omega, \log \abs{z}^4)$. This completes the proof when $g$ is rational.

Now suppose $g \in A^p(\Omega, \log \abs{z}^4)$ is non-rational. So, we can assume that the Laurent expansion of $g$ around infinity is
$$
g(z) = \sum\limits_{k=\ell}^{\infty} a_k z^{-k},
$$
where $\ell \geq 1$ whenever $p \geq 2$, and $a_{\ell} \neq 0$.

We construct a non-rational function $h$ inside $A^p(\Omega, \log \abs{z}^4)$ whose Laurent expansion about infinity does not contain the terms $z^{-1}, z^{-2}, \dots, z^{-\ell}$ as follows. Choose points $z_1, \dots, z_{\ell + 1}$ from $\Omega$, and define
\begin{equation}\label{hnon-r}
    h(z) = \sum_{1}^{\ell + 1} b_j \left( \frac{g(z) - g(z_j)}{z - z_j} \right), 
\end{equation}
where the coefficients $b_j$'s are to be determined. Also, $h$ can be written near infinity in the form 
$$
h(z)= \sum\limits_{k=1}^{\infty} c_k z^{-k}.
$$
Choosing $R > 0$ large enough such that $B(\infty, R) \subset \{z \in \Omega: |z_j| < |z|, 1 \leq j \leq p+1\}$, we compare the Laurent expansion of $h$ with its definition in (\ref{hnon-r})
$$
h(z) = \sum_{1}^{\ell + 1} b_j \left( -\sum_{k=1}^{\ell} \frac{g(z_j) z^{k-1}_j}{z^{k}} + \sum_{r=1}^{\infty} \frac{\Sigma_{k=0}^{r-1} a_{\ell + k} z^{r-1 -k}_j - g(z_j) z^{\ell + r -1}_j}{z^{\ell + r}} \right),
$$
and obtain
$$
B_k = -\sum_{1}^{\ell + 1} b_j g(z_j) z^{k-1}_j,
$$
for all $k = 1, \dots, \ell$. The homogeneous system of linear equations
$$
\sum_1^{\ell + 1} b_j g(z_j) z^{k-1}_j = 0,
$$
has solutions $b_j \in \mathbb{C}$. Thus, there exists $b_j$ such that
$$
B_1 = \dots = B_{\ell} = 0.
$$

Hence, by construction $h$ is a non-rational holomorphic function in $A^p(\Omega, \log \abs{z}^4)$ that does not have the terms $z^{-1}, \dots, z^{-\ell}$ in its Laurent expansion. In particular, it is different from the chosen $g$. Applying the above algorithm on $h$, a function $h_1 \in A^p(\Omega, \log \abs{z}^4)$ can be obtained such that the expansion of $h_1$ does not contain $z^{-1}, \dots, z^{-(\ell + 1)}$. Thus $A^p(\Omega, \log \abs{z}^4)$ is an infinite dimensional Banach space.
\end{proof}

The remaining case is when $\Omega$ contains a neighborhood of infinity. In this situation, it can be shown that every non-trivial $A^p(\Omega)$ is infinite-dimensional. Precisely, the only adaptation of the above argument required to settle this case is that for non-rational $g \in A^p(\Omega)$, the expansion of $g$ about infinity
$$
g(z) = \sum_{k=\ell}^{\infty} a_k z^{-k},
$$
must have $\ell \geq 2$ when $p = 2$, and $\ell \geq 1$ whenever $p>2$. Afterward, a different non-rational $h$ can be constructed in $A^p(\Omega)$ in an identical fashion, establishing the infinite dimensionality of $A^p(\Omega)$.
This completes the proof of the equivalence $(1)$ in theorem \ref{th:main.planar}, the $p$-dimensional analog of Wiegerinck's result for planar domains whenever $p \geq 2$.\\
The following proposition completes the proof of theorem \ref{th:main.planar}.
\begin{Proposition}
Suppose there exists subharmonic  $\phi: \Omega \to [-\infty,0)$ with $\phi - \e_0\norm{\cdot}^2 \in SH(\Omega)$ for some $\e_0>0$ and $\{z \in \Omega: \nu(\phi,z)=0\}$ contains a non-empty open subset of $\Omega$, then for $1 \leq p <2$, $A^p(\Omega)$ is infinite dimensional.
\end{Proposition}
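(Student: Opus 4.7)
The plan is to construct infinitely many linearly independent functions in $A^p(\Omega)$ by iterated application of the $L^p$-H\"ormander-Bombieri-Skoda theorem (proposition \ref{prop:pSkoda}) with weights engineered to enforce prescribed zeros. Fix a sequence of distinct points $\{z_k\}_{k \geq 1}$ in the open subset $U \subset \Omega$ where $\nu(\phi,\cdot) = 0$. By Skoda's local integrability theorem, $e^{-c\phi}$ is locally integrable near every $z_k$ for all $c > 0$, and the hypothesis $\phi - \e_0 |z|^2 \in SH(\Omega)$ yields the distributional bound $\Delta \phi \geq 4\e_0$, which is the strict subharmonicity required by the H\"ormander $L^p$-estimate underlying proposition \ref{prop:pSkoda}.

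For each $k \geq 1$ I would form the modified weight
\[
\phi_k(z) := \phi(z) + 2 \sum_{j=1}^{k-1} \log |z - z_j|,
\]
which remains subharmonic on $\Omega$, still satisfies $\phi_k - \e_0 |z|^2 \in SH(\Omega)$, has $\nu(\phi_k, z_k) = 0$ (so $e^{-\phi_k}$ is still locally integrable near $z_k$), and has $\nu(\phi_k, z_j) \geq 2$ at each earlier point. Invoking proposition \ref{prop:pSkoda} with weight $\phi_k$ and base point $z_k$ produces a holomorphic $f_k$ on $\Omega$ with $f_k(z_k) \neq 0$ and finite weighted $L^p$-norm against $e^{-\phi_k}$. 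The non-integrable singularity $|z - z_j|^{-2}$ in $e^{-\phi_k}$ at each $z_j$ forces $f_k(z_j) = 0$ for every $j < k$, since otherwise $|f_k|^p e^{-\phi_k}$ would fail to be integrable on a small disc around $z_j$. The resulting lower-triangular evaluation pattern makes $\{f_k\}_{k \geq 1}$ linearly independent.

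The main technical obstacle is passing from the weighted $L^p$-estimate produced by proposition \ref{prop:pSkoda} to unweighted membership $f_k \in A^p(\Omega)$, particularly when $\Omega$ is unbounded. On bounded $\Omega$ the hypothesis $\phi < 0$ directly yields $e^{-\phi_k}$ bounded below by a positive constant away from the $z_j$, so that the weighted norm controls the unweighted one. For unbounded $\Omega$, the logarithmic terms in $\phi_k$ cause $e^{-\phi_k}$ to decay polynomially at infinity, threatening the unweighted $L^p$-norm of $f_k$; here one must exploit the strict lower bound $\Delta\phi \geq 4\e_0$ to run the H\"ormander $\bar{\partial}$-machinery with an added $\delta |z|^2$-type twist in the weight, which restores enough decay at infinity without destroying the strict subharmonicity that drives the $L^p$-estimate. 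This is precisely the step where the hypothesis $\phi - \e_0 |z|^2 \in SH(\Omega)$ is used in full strength.
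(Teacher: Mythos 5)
Your argument is correct in substance but follows a different route from the paper's. The paper disposes of this proposition by declaring it identical to Theorem \ref{th:point-sep}: there one fixes finitely many points $p_1,\dots,p_k\in U$, solves a $\bar\partial$-problem for the data $\bar\partial\chi(z-p_k)$ against a weight carrying the singular terms $\sum_j M\chi(z-p_j)\log|z-p_j|$ together with a correction $-Kc_1(M+\epsilon)\log(1+\|z\|^2)$ absorbed by the hypothesis $\phi-\e_0|z|^2\in SH(\Omega)$, and passes from the $L^2$ bound on the solution to an $L^p$ bound via the \emph{reverse} H\"older inequality, producing a diagonal family with $f_k(p_j)=\delta_{jk}$. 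You instead iterate Proposition \ref{prop:pSkoda} as a black box, planting logarithmic poles $2\sum_{j<k}\log|z-z_j|$ in the weight so that integrability against the non-integrable singularities $|z-z_j|^{-2p}$ forces the triangular vanishing pattern $f_k(z_j)=0$ for $j<k$; both mechanisms are sound, and your $L^2\!\to\!L^p$ passage is simply hidden inside Proposition \ref{prop:pSkoda}. Two points to tighten. First, Proposition \ref{prop:pSkoda} does not require strict subharmonicity of the weight, only $e^{-2u}\in L^1_{\mathrm{loc}}$ near the base point; the hypothesis $\phi-\e_0|z|^2\in SH(\Omega)$ is needed only for your final step, so your opening claim about what the proposition ``requires'' is off (harmlessly). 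Second, that final step should be executed rather than gestured at: apply Proposition \ref{prop:pSkoda} to $u_k:=\phi_k-\delta|z|^2$ with $0<\delta\le\e_0$ (still subharmonic, and $e^{-2u_k}$ is still integrable near $z_k$ since $\nu(\phi,z_k)=0$); the resulting weight $e^{-pu_k}(1+|z|^2)^{-(1+\epsilon)}=e^{-p\phi}\,e^{p\delta|z|^2}(1+|z|^2)^{-(1+\epsilon)}\prod_{j<k}|z-z_j|^{-2p}$ is then bounded below by a positive constant on $\Omega$ off small discs around $z_1,\dots,z_{k-1}$ (using $\phi<0$ and the Gaussian growth of $e^{p\delta|z|^2}$), which yields $f_k\in A^p(\Omega)$ because $f_k$ is bounded on those discs. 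With that step written out, your quadratic twist does the same job as the paper's logarithmic correction and is, if anything, more robust.
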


Proof is identical to theorem \ref{th:point-sep}. Note that, by theorem $1.3$ of \cite{gallagher2021closed} the infinite dimensionality of $A^p(\Omega)$ can be equivalently seen as consequences of the $\bar \partial$-operator having closed range on $L^2(\Omega)$, or, the existence of a Poincar\'e-Dirichlet inequality on $\Omega$, or, $\Omega$ having a finite \textit{capacity inradius} $\rho_{cap}(\Omega) < \infty$.

\section{Infinite Dimensionality of \texorpdfstring{$p$}{p}-Bergman Spaces}

\subsection{\texorpdfstring{$L^p$}{Lp}-tools to study dimensionality}\label{subsec:tools}
In this subsection, we obtain certain sufficient conditions on pseudoconvex domains that ensure the infinite dimensionality of their $p$-Bergman spaces. We prove $L^p$-analogs of some classical extension theorems and as applications obtain sufficient conditions for infinite dimensionality of $p$-Bergman spaces of pseudoconvex domains.

For the sake of completeness, we recall a few well-known notions concerning the singularities of subharmonic functions. The \textbf{Lelong number} of $\phi \in SH(G)$ at $z$ is denoted by $\nu(\phi,z)$ and is defined by
$$
\nu(\phi,z) := \liminf_{w \to z} \frac{\phi(w)}{\log |w-z|}.
$$
By \cite[Proposition 2.2]{jucha2012remark}, when $\phi$ is a subharmonic function on a planar domain, it can also be described as $$\nu(\phi,z)=\inf \set{t>0: e^{-2\phi/t} \text{ is integrable in some neighbourhood of }z }.$$
We denote by $\lfloor x \rfloor$ the integer part of $x \in \mathbb{R}$, by $\text{frac}(x)= x - \lfloor x \rfloor$ the fractional part of $x \in \mathbb{R}$.

First, we make an observation regarding the $L^p$-analog of H\"ormander-Bombieri-Skoda theorem (see \cite{bombieri1970algebraic} and also \cite{boudreaux2021dimension}).
\begin{Proposition}\label{prop:pSkoda}
Let $1\leq p <2$. Let $u$ be a plurisubharmonic function on a pseudoconvex domain $G \subset \mathbb{C}^M$. If $e^{-2u}$ is integrable in the neighborhood of a point $z_0 \in G$, then for any $\epsilon >0$ there exists $f \in \mathcal{O}(G)$ with $f(z_0)=1$ and
\begin{equation}\label{eq:pSkoda}
\begin{aligned}
\int_G \frac{|f(z)|^p e^{-pu(z)}}{(1+\norm{z}^2)^{M+\epsilon}} \,d\lambda^{2M}(z) &< \infty.
\end{aligned}
\end{equation}
\end{Proposition}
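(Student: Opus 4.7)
The plan is a two-step reduction: first apply the classical $L^2$-Hörmander--Bombieri--Skoda theorem to produce the function, then pass from the resulting $L^2$-bound to the desired $L^p$-bound via Hölder's inequality, leveraging $1\leq p<2$.

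\textbf{Step 1 (classical $L^2$-theorem).} The assumption that $e^{-2u}$ is integrable in a neighborhood of $z_0$ is precisely the local integrability hypothesis of the classical Bombieri--Skoda theorem (a consequence of Hörmander's $\bar\partial$-theory) applied with plurisubharmonic weight $2u$. Hence for every $\delta>0$ there exists $f\in\mathcal{O}(G)$ with $f(z_0)=1$ and
\[
\int_G |f(z)|^2 e^{-2u(z)}(1+|z|^2)^{-M-\delta}\,d\lambda^{2M}(z) < \infty.
\]

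\textbf{Step 2 (Hölder interpolation).} Since $1\leq p<2$, the exponents $2/p$ and $2/(2-p)$ are Hölder conjugate (with value $2,2$ at the endpoint $p=1$). For a parameter $\alpha>0$ to be chosen, split the integrand of \eqref{eq:pSkoda} as $\bigl[|f|^p e^{-pu}(1+|z|^2)^{-\alpha}\bigr]\cdot(1+|z|^2)^{\alpha-M-\epsilon}$ and apply Hölder:
\[
\int_G |f|^p e^{-pu}(1+|z|^2)^{-M-\epsilon}\,d\lambda \leq \Bigl(\int_G |f|^2 e^{-2u}(1+|z|^2)^{-2\alpha/p}\,d\lambda\Bigr)^{p/2} \Bigl(\int_G (1+|z|^2)^{\frac{2(\alpha-M-\epsilon)}{2-p}}\,d\lambda\Bigr)^{(2-p)/2}.
\]
Setting $\alpha=\tfrac{p}{2}(M+\delta)$ makes $2\alpha/p=M+\delta$, so Step 1 controls the first factor. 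The second factor is a pure polynomial weight on $G\subset\mathbb{C}^M$, finite iff the exponent is strictly less than $-M$; an easy calculation shows this reduces to $p\delta<2\epsilon$. Any $\delta\in(0,2\epsilon/p)$ meets both constraints, and the corresponding $f$ satisfies \eqref{eq:pSkoda}.

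I anticipate no real obstacle here: the entire content is the standard interpolation observation that, for $p<2$, $L^2$-bounds against polynomial weights automatically upgrade to $L^p$-bounds against slightly heavier polynomial weights. The endpoint $p=1$ is harmless since $2/(2-p)=2$ remains finite, and the only bookkeeping lies in the simultaneous choice of $\alpha$ and $\delta$.
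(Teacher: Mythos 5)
Your proposal is correct and follows essentially the same route as the paper: invoke the classical $L^2$ H\"ormander--Bombieri--Skoda theorem with weight $2u$ to get $f$ with $f(z_0)=1$, then upgrade to the $L^p$ bound by H\"older with conjugate exponents $2/p$ and $2/(2-p)$, using that $(1+|z|^2)^{-M-\epsilon}$ is integrable on $\mathbb{C}^M$. The paper simply makes the fixed choice corresponding to your $\delta=\epsilon$ (which indeed satisfies $\delta<2\epsilon/p$), so your extra parameter $\alpha,\delta$ bookkeeping is sound but not needed.
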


\begin{proof}
We shall only apply H\"older's inequality and use the $L^2$ H\"ormander-Bombieri-Skoda theorem. Using H\"older's inequality, we obtain
\begin{align*}
    \int_G \frac{|f(z)|^p e^{-pu}}{(1+\norm{z}^2)^{M+\epsilon}} \,d \lambda^{2M} &\leq \left( \int_G \frac{|f(z)|^2 e^{-2u}}{(1+\norm{z}^2)^{M+\epsilon}} \,d \lambda^{2M} \right)^{p/2} \left( \int_G \frac{1}{(1+\norm{z}^2)^{M+\epsilon}} \,d \lambda^{2M} \right)^{\frac{2-p}{2}}.
\end{align*}
The $L^2$ H\"ormander-Bombieri-Skoda theorem asserts the existence of such a non-trivial function $f$. Since the other integral is bounded on $\mathbb{C}^M$, we are done.
\end{proof}

\begin{remark}\label{pSkoda:entire}
In particular, applying H\"older inequality as above we can obtain $L^p$-integrable non-trivial entire function on $\mathbb{C}^n$ analogous to Bombieri (see \cite[section IV]{bombieri1970algebraic}).  
\end{remark}
 
More can be said if restrictions on the plurisubharmonic weights are enforced. We show that the $p$-Bergman space of a pseudoconvex domain $G$ that admits a negative-valued plurisubharmonic function $\phi$ with $\phi -c\|\cdot\|^2 \in PSH(G)$, for some $c>0$, and if moreover the Lelong number of $\phi$ is zero on an open $U \subset G$, then $A^p(G)$ is infinite-dimensional. In particular, for such a plurisubharmonic weight the $p$-Bergman space $A^p(G)$ separates points in $U$.

\begin{Theorem}\label{th:point-sep}
Let $1 \leq p <2$. Let $G \subset \mathbb{C}^M$ be a pseudoconvex domain, $\phi \in PSH(G)$ with $\phi(G) \subset [-\infty,0)$ such that $\phi - c\|\cdot\|^2 \in PSH(G)$, for some $c>0$. Moreover, assume $U$ is an open subset of $G$ such that the Lelong number $\nu(\phi,\cdot)=0$ on $U$. Then $A^p(G)$ is infinite-dimensional.     
\end{Theorem}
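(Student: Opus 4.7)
The plan is to produce, for each $k\in\mathbb{N}$, a linearly independent family of $k$ functions in $A^p(G)$ by iterated application of the $L^p$-H\"ormander--Bombieri--Skoda theorem (Proposition \ref{prop:pSkoda}) with a carefully engineered sequence of singular plurisubharmonic weights. This is the $L^p$-analogue of the standard H\"ormander--Bombieri construction of point-separating Bergman functions associated to a bounded strictly plurisubharmonic weight, and the argument should mirror the planar case treated at the end of Section \ref{sec:planar}. The three hypotheses on $\phi$ will be used as follows: the vanishing of the Lelong number on $U$ together with Skoda's integrability theorem gives $e^{-2N\phi}\in L^1_{\mathrm{loc}}$ near every point of $U$ for each $N>0$; the sign condition $\phi<0$ gives $e^{-pN\phi}\geq 1$ throughout $G$; and the strong convexity $\phi-c\|z\|^2\in PSH(G)$ is used to promote the polynomially weighted $L^p$-estimate produced by Proposition \ref{prop:pSkoda} to the unweighted estimate required for membership in $A^p(G)$.

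Fix distinct points $z_1,\ldots,z_k\in U$ and, for each $i\in\{1,\ldots,k\}$, introduce the plurisubharmonic weight
$$
u_i(z)\;=\;N\phi(z)\;+\;A\sum_{j\ne i}\log\|z-z_j\|,
$$
with $A>2M/p$ and $N$ large (to be chosen). Because $\nu(\phi,z_i)=0$, Skoda's theorem gives $e^{-2N\phi}\in L^1_{\mathrm{loc}}$ near $z_i$; the logarithmic terms are bounded away from $z_i$, so $e^{-2u_i}\in L^1_{\mathrm{loc}}$ at $z_i$. Applying Proposition \ref{prop:pSkoda} yields $F_i\in\mathcal{O}(G)$ with $F_i(z_i)=1$ and
$$
\int_G\frac{|F_i|^p\,e^{-pu_i}}{(1+|z|^2)^{M+\e}}\,d\lambda^{2M}\;<\;\infty.
$$
Since $e^{-pN\phi}\geq 1$, the $\|z-z_j\|^{-pA}$ singularity of $e^{-pu_i}$ at each $z_j$ with $j\ne i$, combined with a standard Cauchy / mean-value argument (valid because $pA>2M$), forces $F_i(z_j)=0$. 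Hence $F_i(z_j)=\delta_{ij}$, and the family $\{F_i\}_{i=1}^k$ is linearly independent provided each $F_i$ actually lies in $A^p(G)$.

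The main obstacle is this last point: upgrading the weighted estimate to $\int_G|F_i|^p\,d\lambda^{2M}<\infty$. Using $e^{-pN\phi}\geq 1$ once more, the task reduces to absorbing the polynomial factor $(1+|z|^2)^{M+\e}$ into $e^{-pN\phi}$ outside a compact subset of $G$; for bounded $G$ this is immediate. In the unbounded case it is precisely where the strong convexity $\phi-c\|z\|^2\in PSH(G)$ enters essentially: the auxiliary psh function $\psi:=\phi-c\|z\|^2$ is bounded above by $-c\|z\|^2$ on $G$, and the sub-mean-value inequality applied to $\psi$ forces quantitative decrease of $\psi$ (equivalently, growth of $-\phi=-\psi-c\|z\|^2$) at the unbounded ends of $G$; choosing $N$ sufficiently large then makes $e^{-pN\phi}$ dominate $(1+|z|^2)^{M+\e}$ at infinity, so $F_i\in A^p(G)$. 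With this secured, $\dim A^p(G)\geq k$, and since $k$ is arbitrary, $A^p(G)$ is infinite-dimensional. The heart of the argument---and its principal technical subtlety---is exactly this quantitative growth of $-\phi$ at the ends of $G$ inherited from the strong convexity hypothesis.
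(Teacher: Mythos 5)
Your construction of the candidate functions $F_i$ via Proposition \ref{prop:pSkoda} with the weights $u_i=N\phi+A\sum_{j\ne i}\log\|z-z_j\|$ is fine up to and including the interpolation $F_i(z_j)=\delta_{ij}$, but the final step --- upgrading the weighted estimate to $F_i\in A^p(G)$ --- contains a genuine gap. You claim that $\phi-c\|z\|^2\in PSH(G)$ forces, via the sub-mean-value inequality, quantitative growth of $-\phi$ at the unbounded ends of $G$, strong enough that $e^{-pN\phi}$ eventually dominates $(1+|z|^2)^{M+\e}$ (and, implicitly, also the decaying factor $\prod_{j\ne i}\|z-z_j\|^{-pA}\sim\|z\|^{-pA(k-1)}$ sitting inside $e^{-pu_i}$, which works against you). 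What the sub-mean-value inequality actually yields is $-\phi(z)\gtrsim c\,\dist(z,\partial G)^2$, which is useless when the inradius of $G$ stays bounded at infinity. Concretely, take $M=1$ and let $G$ be an unbounded planar domain with non-polar complement (e.g.\ a strip shrinking at infinity): by the equivalences quoted in the introduction there is a smooth $\phi$ with values in $[-1,0)$ and $\Delta\phi\geq 4c>0$, so every hypothesis of the theorem holds and $\nu(\phi,\cdot)\equiv 0$, yet $e^{-pN\phi}\leq e^{pN}$ is bounded for every $N$ and cannot absorb any growing polynomial. Choosing $N$ large does not rescue the step.

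The paper closes exactly this hole by a different mechanism: the hypothesis $\phi-c\|\cdot\|^2\in PSH(G)$ is used only to guarantee that $\hat\phi:=\phi-c_1(M+\e)\log(1+\|\cdot\|^2)$ is still plurisubharmonic for small $c_1>0$; the existence theorem is run with $K\hat\phi$ (plus the logarithmic poles) as the weight, and a reverse H\"older inequality with exponent $2/p$ then transfers the polynomial factor onto the conjugate integral $\int_G(1+\|z\|^2)^{-\frac{2}{2-p}Kc_1(M+\e)}\,d\lambda^{2M}$, which is finite once $Kc_1>1$. Your argument can very likely be repaired in the same spirit: feed Proposition \ref{prop:pSkoda} the weight $u_i=N\hat\phi+A\sum_{j\ne i}\log\|z-z_j\|$ instead of $N\phi+\cdots$; then $e^{-pu_i}$ carries the growing factor $(1+\|z\|^2)^{pNc_1(M+\e)}$, which for large $N$ beats both the denominator $(1+|z|^2)^{M+\e}$ and the decay of $\prod_{j\ne i}\|z-z_j\|^{-pA}$, giving $F_i\in A^p(G)$ with no growth assumption on $-\phi$. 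As written, however, the proof does not close.
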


\begin{proof}
Consider $c_1 >0$ small enough such that $c\|z\|^2 - c_1 (M+\epsilon) \log (1+\|z\|^2)$ is plurisubharmonic in $G$, and define
$$
\hat{\phi} := \phi - c_1 (M+\epsilon) \log (1+\|\cdot\|^2).
$$
Also, the Lelong number of $\hat{\phi}$ is same as $\phi$ on $G$, and thus is zero on $U$.

Now imitating lemma $6$ in \cite{gallagher2017dimension}, for finitely many points $p_1, \dots, p_k \in U$ and $ \e<\frac{1}{2} \norm{p_i-p_j}$, we set the $\bar \partial$-data to be $v= \bar \partial \chi(z-p_k)$, where $\chi:\mathbb{C}^n \to [0,1]$ is a bump function that is zero on $B(0,\frac{\epsilon}{3})$ and one on $\mathbb{C}^n \setminus B(0, \frac{2\epsilon}{3})$. Setting
\begin{align*}
\Phi(z)&:= K \hat{\phi}(z) + \sum_1^k M \chi (z-p_j) \log \norm{z-p_j}, \\
\Phi_1 (z)&:= K \phi(z) + \sum_1^k M \chi (z-p_j) \log \norm{z-p_j},
\end{align*}
choose large $K>0$ such that $\Phi \in PSH(G)$, $\Phi - \|\cdot\|^2 \in PSH(B(p_k,\epsilon))$, and $Kc_1 >1$. Note that the hypothesis on the Lelong number ensures $e^{-2K\phi} \in L^1_{loc}(U)$, for any $K >0$. Solving an appropriate $\bar \partial$-problem one finds $u$ such that
$$
\int_{G} |u|^2 e^{-2 \Phi} \,d \lambda^{2M} < \infty,
$$
forcing $u(p_j)=0$ for all $1 \leq j \leq k$.
Applying reverse H\"older inequality with $\frac{2}{p} \in (1, \infty)$ it can be seen that
\begin{equation}\label{eq:point-separatingRH\"older}
\begin{aligned}
\left( \int_G |u|^p e^{-p\Phi - pKc_1(M+\epsilon) \log (1+\|z\|^2)} \,d \lambda^{2M}\right)^{\frac{2}{p}} &\left( \int_G e^{-\frac{2p}{2-p}K c_1(M + \epsilon) \log (1+\|z\|^2)} \,d \lambda^{2M} \right)^{\frac{p-2}{p}} \\
&\leq \int_{G} |u|^2 e^{-2 \Phi} \,d \lambda^{2M} < \infty.
\end{aligned}
\end{equation}
As the second integral on the left hand side of equation (\ref{eq:point-separatingRH\"older}) is non-zero and finite, we obtain
$$
\int_G |u|^p e^{-p\Phi_1} \,d \lambda^{2M} < \infty,
$$
$u \in L^p(G)$ as $\phi$ is negative. Hence, $\displaystyle{f_k(z):= \chi(z-p_k) - u(z)} \in A^p(G)$ such that $f_K(p_j)=0$ for $j \neq k$ and $f_k(p_k)=1$. Since $k$ was arbitrary and $\{f_k\}$ are linearly independent, $A^p(G)$ is an infinite dimensional Banach space.
\end{proof}

We now turn to classical theorems on extensions of holomorphic functions. These results enable us to compare the $p$-Bergman spaces of a lower dimensional hyperplane or hypervariety $H$ inside a pseudoconvex domain $G \subset \mathbb{C}^M$. In the following, we prove $L^p$-analogs of these results on unbounded pseudoconvex domains. Inspired by Chen \cite{Chen2021OnTheory} and Xiong \cite{xiong2023minimal}, our arguments obtain non-trivial minimal extensions, and outline a general algorithm to obtain extension results with analogous $L^p$-estimates ($1 \leq p < 2$) when the $L^2$-estimates are known.

Consider the following version of the Ohsawa-Takegoshi extension theorem due to Dinew.
\begin{Theorem}\cite{dinew2007ohsawa} \label{th:OTL^2}
Let $D \subset \Omega \times \mathbb{C}^{n-1}$ be pseudoconvex, $\Omega$ be a planar domain, $\phi \in PSH(D)$, $0 \in D$, $H = \{z_1=0\}$, and let $f$ be a holomorphic function on $D \cap H$ satisfying
$$
\int_{D \cap H} |f(z)|^2 e^{-\phi(z)} \,d\lambda^{2n-2} < \infty.
$$
Then there exists a holomorphic function $F$ on $D$ with $F|_{D \cap H} \equiv f$ such that
$$
\int_D |F(z)|^2 e^{-\phi(z)} \,d\lambda^{2n} \leq \frac{4 \pi}{(c(\Omega,0))^2} \int_{D \cap H} |f(z')|^2 e^{-\phi(z)} \,d \lambda^{2n-2}.
$$
\end{Theorem}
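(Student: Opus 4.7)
The plan is to follow the Ohsawa-Takegoshi template refined to produce the sharp Suita-type constant: a smooth extension corrected by a twisted $\bar\partial$-problem, combined with a pseudoconvex exhaustion to accommodate the possibly unbounded $D$.

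First I would construct a smooth extension by setting $\tilde F(z_1, z') := \chi(z_1) f(z')$, with $\chi$ a cutoff equal to $1$ near $z_1 = 0$ and compactly supported in a small disc of $\Omega$. The form $v := \bar\partial \tilde F = f(z') \, \bar\partial \chi(z_1)$ is $\bar\partial$-closed and supported where $\bar\partial\chi\neq 0$. If I can solve $\bar\partial u = v$ with $u$ vanishing on $D \cap H$ in an appropriate weighted $L^2$-norm, then $F := \tilde F - u$ is the desired holomorphic extension.

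The heart of the argument is choosing the weight so that the constant in the $L^2$-estimate is precisely $4\pi/c(\Omega,0)^2$. I would take $g(z_1) := g_\Omega(z_1, 0)$, the Green's function of the planar base with logarithmic pole at the origin, normalised so that $g(z_1) - \log|z_1| \to -\log c(\Omega,0)$ as $z_1 \to 0$; this is exactly where the logarithmic (Suita) capacity enters. Then I would solve $\bar\partial u = v$ with weight $\phi + 2g + 2\eta(g)$, where $\eta$ is a bounded twist depending only on $g$, chosen to satisfy the Blocki-type ODE making the lower-order error terms in the twisted Bochner-Kodaira-Nakano identity cancel optimally (equivalently, Berndtsson's formulation of Ohsawa's twist). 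The singularity of $e^{-2g}$ along $H = \{z_1 = 0\}$ forces any finite-norm solution $u$ to vanish on $H$, and the normalisation of $g$ converts the estimate into one involving the factor $c(\Omega,0)^{-2}$; the multiplicative $4\pi$ emerges from the mean-value contribution when the integral over a shrinking tube around $H$ is reduced to $\int_{D \cap H} |f|^2 e^{-\phi}\, d\lambda^{2n-2}$.

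The main obstacle is calibrating $\eta$ so that the constant comes out exactly $4\pi/c(\Omega,0)^2$ and not a lossy multiple; this is the core of Dinew's refinement over the original Ohsawa-Takegoshi proof, and requires precise ODE analysis together with a delicate limiting procedure as the twist saturates. A secondary difficulty is that $D$ is only pseudoconvex, possibly unbounded, and $\phi$ is only plurisubharmonic (not smooth). I would handle this by regularising $\phi$ via standard convolution, exhausting $D$ by relatively compact pseudoconvex subdomains $D_j \Subset D_{j+1} \Subset D$, applying the sharp estimate uniformly on each $D_j$, and extracting a limit extension from the uniformly $L^2$-bounded family via Montel's theorem and a diagonal argument. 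Finally, Fatou's lemma transfers the estimate from the $D_j$ to $D$, completing the proof.
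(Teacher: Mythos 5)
This statement is not proved in the paper at all: it is quoted verbatim from Dinew's article and used as a black box (the paper's own contribution begins with Theorem \ref{th:pOT}, the $L^p$ version, whose proof invokes this $L^2$ statement). So there is no in-paper proof to compare against; the relevant comparison is with Dinew's argument, and your outline does follow the same general route he takes: cutoff extension $\chi(z_1)f(z')$, a twisted $\bar\partial$-problem with weight built from the Green function $g_\Omega(\cdot,0)$ of the planar factor (whose normalisation at the pole is exactly how $c(\Omega,0)$ enters), non-integrability of $e^{-2g}$ along $H$ to force vanishing of the correction term, and an exhaustion/regularisation/normal-families step to pass from smooth data on relatively compact pieces to the general pseudoconvex, possibly unbounded $D$ with merely plurisubharmonic $\phi$.

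That said, as a proof your proposal has a genuine gap rather than a wrong turn: everything that makes the constant come out as $4\pi/(c(\Omega,0))^2$ is named but not done. The twisted Bochner--Kodaira--H\"ormander inequality, the specific choice of the auxiliary functions (your $\eta$) and the ODE they must satisfy, and the verification that the error terms supported on $\{\bar\partial\chi\neq 0\}$ contribute exactly the Suita-type constant after the tube around $H$ is shrunk, are the entire content of the theorem; without them one only gets an extension with \emph{some} constant depending on the support of $\chi$. Two smaller points to watch if you were to complete it: (i) with the paper's convention $c(\Omega,z)=\exp\bigl(\lim_{\zeta\to z}G_\Omega(z,\zeta)-\log|z-\zeta|\bigr)$ your normalisation $g(z_1)-\log|z_1|\to-\log c(\Omega,0)$ has the opposite sign, which would invert the capacity in the final constant; (ii) the vanishing of $u$ on $H$ needs the local statement that a locally integrable $|u|^2e^{-2g}$ with $e^{-2g}\sim|z_1|^{-2}$ forces $u|_{\{z_1=0\}}=0$, which holds for the $\bar\partial$-solution because $u$ is smooth (indeed holomorphic) near $H$ where the data vanishes, but this should be said. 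The exhaustion, Montel, and Fatou steps at the end are fine and are also how the present paper handles the analogous passage in its $L^p$ theorems.
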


Here $c(\Omega,z)= \exp (\lim_{\zeta \to z} G_{\Omega}(z, \zeta) - \log \|z-\zeta\|)$ is the \textit{logarithmic capacity} of $\O$, and $G_{\Omega}(z,\cdot)$ is the negative Green's function of a domain $\Omega \subset \mathbb{C}$ with a pole at $z$. Let us denote the weighted $p$-Bergman space with weight $\phi$ by $A^p(D, \phi)$.

We establish an $L^p$-analog of the above version of the Ohsawa-Takegoshi extension theorem.
\begin{Theorem}\label{th:pOT}
Let $1 \leq p < 2$, $\phi, D, \Omega, H$ be as in Theorem \ref{th:OTL^2}, and let $f$ be holomorphic in $D \cap H$ with
$$
\int_{D \cap H} |f(z)|^p e^{-\phi(z)} \,d\lambda^{2n-2} < \infty.
$$
Then there exists holomorphic $F$ on $D$ with $F|_{D \cap H} \equiv f$ such that
$$
\int_D |F(z)|^p e^{-\phi(z)} \,d\lambda^{2n} \leq \frac{4 \pi}{(c(\Omega,0))^2} \int_{D \cap H} |f(z')|^p e^{-\phi(z)} \,d \lambda^{2n-2}.
$$
\end{Theorem}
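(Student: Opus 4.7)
The plan is to adapt the \emph{minimal extension} technique of Chen \cite{Chen2021OnTheory} and Xiong \cite{xiong2023minimal} via a self-referential weight modification that reduces the $L^p$ estimate to the $L^2$ theorem of Dinew (Theorem \ref{th:OTL^2}). The algebraic heart of the argument is the pointwise identity
$$|F|^p e^{-\phi} \;=\; |F|^2\, e^{-\phi - (2-p)\log|F|},$$
valid wherever $F\neq 0$. Since $F$ will be holomorphic and $2-p\geq 0$, the shifted weight $\phi + (2-p)\log|F|$ remains plurisubharmonic and is therefore admissible in the $L^2$ extension theorem; applying Theorem \ref{th:OTL^2} with this weight would morally produce the desired $L^p$-estimate, except that the weight circularly depends on the extension $F$ we are trying to construct. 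The argument must therefore be closed by a fixed-point procedure.

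To make this rigorous, I would first exhaust $D$ by bounded relatively compact pseudoconvex subdomains $D_k \Subset D$, and regularize by replacing $\log|F|$ with $\tfrac12\log(|F|^2+\delta^2)$ to avoid the singularity at the zero set of $F$. On each $(D_k,\delta)$, pick any initial holomorphic extension $F^{(0)}$ of $f|_{D_k\cap H}$ (which exists by a direct application of Theorem \ref{th:OTL^2} on the bounded $D_k$ against a sufficiently large bounded reference weight), and inductively let $F^{(j+1)}$ be the $L^2$-minimal extension of $f$ with respect to the plurisubharmonic weight
$$\psi_j := \phi + \tfrac{2-p}{2}\log\bigl(|F^{(j)}|^2+\delta^2\bigr).$$
Theorem \ref{th:OTL^2} supplies the $j$-independent bound
$$\int_{D_k}|F^{(j+1)}|^2 e^{-\psi_j} \;\leq\; \frac{4\pi}{c(\Omega,0)^2}\int_{D_k\cap H}|f|^2\bigl(|f|^2+\delta^2\bigr)^{-(2-p)/2} e^{-\phi} \;\leq\; \frac{4\pi}{c(\Omega,0)^2}\int_{D\cap H}|f|^p e^{-\phi}.$$
From this I extract a normal-family subsequential limit $F_{k,\delta}$ which is a fixed point of the iteration; letting $\delta\to 0$ (Fatou's lemma on the left, dominated convergence on the right) and then $k\to\infty$ through a diagonal argument produces a holomorphic extension $F$ on all of $D$ satisfying the stated $L^p$-estimate.

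The main obstacle will be the fixed-point step: strict convexity of the $L^2$-norm with fixed weight guarantees uniqueness of each minimizer $F^{(j+1)}$, and the uniform bound provides Montel compactness, but verifying that a Montel limit of the iterates is genuinely the minimal extension for the \emph{limiting} weight---and therefore a true fixed point---requires a careful stability analysis as the weight varies with $j$. A secondary technicality is the existence of the initial extension $F^{(0)}$ when $f$ is only $L^p$ rather than $L^2$, and the degenerate case $p=1$ where strict convexity fails; both should be manageable by working on bounded $D_k$ with a bounded weight offset and by a direct minimization argument in the case $p=1$.
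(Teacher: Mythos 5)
Your overall skeleton (exhaustion by bounded pseudoconvex $D_k$, the identity $|F|^p e^{-\phi}=|F|^2e^{-\phi-(2-p)\log|F|}$, minimal extensions, Montel plus a diagonal argument) matches the paper, but the step where you close the circular dependence of the weight on $F$ is genuinely different and, as written, has a gap. Your iteration produces only the mixed bound $\int_{D_k}|F^{(j+1)}|^2e^{-\phi}(|F^{(j)}|^2+\delta^2)^{-(2-p)/2}\leq C$; this does not by itself give a $j$-uniform bound on $\|F^{(j)}\|$ in any fixed norm, so normal-family compactness of the iterates is not yet justified. More seriously, even granting a convergent subsequence $F^{(j_m)}\to G$, nothing forces $G$ to be a fixed point: you would need $F^{(j_m+1)}$ to converge to the \emph{same} limit, and subsequential convergence of $F^{(j_m)}$ gives no control on $F^{(j_m+1)}$. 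The stability analysis you defer is exactly the missing content. (A salvage along your lines does exist: setting $A_j=\int|F^{(j)}|^pe^{-\phi}$, the reverse H\"older inequality applied to your mixed bound yields $A_{j+1}\leq C^{p/2}A_j^{(2-p)/2}$, and since $(2-p)/2<1$ this recursion self-improves to $\limsup_j A_j\leq C$, after which Fatou on a subsequential limit finishes; but this argument is absent from your write-up and requires a finite $A_0$ to start.)

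The paper avoids the iteration entirely, and you should compare the two closings. On each $D_k$ one takes $F_k$ to be the $L^p$-\emph{minimal} element of $E^p(D_k,\phi_k)$ (which exists by the $p$-Bergman inequality and normality, since $\phi_k$ is smooth and $D_k$ bounded), then applies Theorem \ref{th:OTL^2} \emph{once} with the weight $\phi_k+(2-p)\log|F_k|$ --- which is legitimate because $F_k$ is now a fixed, already-constructed function --- to obtain some extension $g$ with
\begin{equation*}
\int_{D_k}|g|^2|F_k|^{p-2}e^{-\phi_k}\,d\lambda^{2n}\leq \frac{4\pi}{c(\Omega,0)^2}\int_{D_k\cap H}|f|^pe^{-\phi_k}\,d\lambda^{2n-2}.
\end{equation*}
The reverse H\"older inequality with exponent $2/p$ gives $\bigl(\int|g|^pe^{-\phi_k}\bigr)^{2/p}\bigl(\int|F_k|^pe^{-\phi_k}\bigr)^{-(2/p-1)}\leq\int|g|^2|F_k|^{p-2}e^{-\phi_k}$, so $g\in E^p(D_k,\phi_k)$, and the minimality $\|F_k\|_{p,\phi_k}\leq\|g\|_{p,\phi_k}$ combined with the two displays yields $\int_{D_k}|F_k|^pe^{-\phi_k}\leq\frac{4\pi}{c(\Omega,0)^2}\int_{D_k\cap H}|f|^pe^{-\phi_k}$ in one stroke. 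No fixed point, no $\delta$-regularization (the weight $(2-p)\log|F_k|$ is $-\infty$ only on the zero set of $F_k$, which is harmless for the $L^2$ theorem), and no convexity issues at $p=1$. If you rewrite your argument with the $L^p$-minimizer playing the role of the self-referential weight from the outset, the rest of your exhaustion and diagonal-sequence steps go through essentially as you describe.
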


\begin{proof}
Without loss of generality, assume $f(0)=1$. Consider a standard smooth plurisubharmonic regularization of $\phi$. That is, $\{D_k: k \in \mathbb{N}\}$ is a family of bounded pseudoconvex domains inside $D$ such that
\begin{itemize}
    \item $\overline{D}_{k-1} \subset D_k$,
    \item $0 \in D_k$, for all $k \in \mathbb{N}$,
    \item $\cup_k D_k = D$,
\end{itemize}
and $\phi_k \in \mathcal{C}^{\infty}(\overline{D}_k) \cap PSH(D_k)$ with $\phi_k \searrow \phi$ uniformly on compact subsets.

Define 
$$
E^p(D_k, \phi_k):= \{F \in \mathcal{O}(D_k): \int_{D_k} |F|^p e^{-\phi_k} \,d \lambda^{2M} < \infty, F|_{D\cap H} \equiv f\},
$$
and using the smoothness of $\phi_k$ and boundedness of $D_k$, every holomorphic extension of $f$ in $D_k$ obtained using Oka-Cartan theory, lies in $E^p(D_k, \phi_k)$. Using $p$-Bergman inequality of \cite{Chen2021OnTheory}, $E^p(D_k,\phi_k)$ is a normal family and thus admits a minimal element with respect to the norm $\displaystyle{\|\cdot\|_{p,\phi_k} = \int_{D_k} |\cdot|^p e^{-\phi_k} \,d \lambda^{2M}}$ on $D_k$. We denote it by $F_k \in E^p(D_k,\phi_k)$.

Define, 
\begin{align*}
E^2(D_k, \phi_k + (2-p) \log |F_k|):= \{F \in \mathcal{O}(D_k): &\int_{D_k} |F|^2 e^{-\phi_k - (2-p) \log |F_k|} \,d \lambda^{2n} \\
&\leq \frac{4 \pi}{c(\Omega,0)^2} \int_{D_k \cap H} |f(z')|^p e^{-\phi_k} \,d \lambda^{2(n-1)} \}.
\end{align*}
Using the Ohsawa-Takegoshi extension theorem, $E^2(D_k, \phi_k + (2-p) \log |F_k|)$ must be non-empty (see theorem $3$ in \cite{dinew2007ohsawa}). Pick, $g \in E^2(D_k, \phi_k + (2-p) \log |F_k|)$. Now due to the reverse H\"older's inequality with exponent $\frac{2}{p} \in (1, \infty)$,
\begin{align*}
\left(\int_{D_k} |g|^p e^{-\phi_k} \,d \lambda^{2n} \right)^{\frac{2}{p}} \left( \int_{D_k} |F_k|^p e^{-\phi_k} \,d \lambda^{2n} \right)^{-(\frac{2}{p}-1)}
&\leq \int_{D_k} |g|^2 |F_k|^{p-2} e^{-\phi_k} \,d \lambda^{2n} \hspace{1.5mm}\text{and since} \\
\int_{D_k} |g|^2 |F_k|^{p-2} e^{-\phi_k} \,d \lambda^{2n} &\leq \frac{4\pi}{c(\Omega,0)^2} \int_{D_k \cap H} |f(z')|^p e^{-\phi_k} \,d \lambda^{2(n-1)},
\end{align*}
we have shown that $g \in E^p(D_k, \phi_k)$. The minimality of $F_k$ in $E^p(D_k, \phi_k)$ forces $\|F_k\|_{p, \phi_k} \leq \|g\|_{p,\phi_k}$. Thus,
$$
\int_{D_k} |F_k|^p e^{-\phi_k} \,d \lambda^{2n} \leq \frac{4\pi}{c(\Omega,0)^2} \int_{D_k \cap H} |f(z')|^p e^{-\phi_k} \,d \lambda^{2(n-1)}.
$$

Now we shall employ a standard diagonal sequence argument as follows:\\
Fix $k\in \N$ and consider a sequence $(F_{\ell})_{\ell \geq k} \subset E^p(D_k, \phi_k)$ of minimal elements of $E^p(D_{\ell}, \phi_{\ell})$. To argue $F_{\ell} \in E^p(D_k, \phi_k)$, note
$$
\int_{D_k} |F_{\ell}|^p e^{-\phi_k} \,d \lambda^{2n} \leq \int_{D_{\ell}} |F_{\ell}|^p e^{-\phi_{\ell}} \,d \lambda^{2n} \lesssim \frac{4\pi}{c(\Omega,0)^2} \int_{D \cap H} |f(z')|^p e^{-\phi} \,d \lambda^{2(n-1)},
$$
for all $\ell \geq k$. As the bound on the RHS is independent of $\ell$, it establishes that $(F_{\ell})_{\ell \geq k}$ constitutes a normal family, and thus has a convergent subsequence which we denote by $(F_{k,\ell})_{\ell \in \mathbb{N}} \subset E^p(D_k, \phi_k)$. Next, we consider this subsequence inside $E^p(D_{k+1}, \phi_{k+1})$, and this is again a normal family admitting a subsequence $(F_{k+1, \ell})_{\ell \in \mathbb{N}}$ convergent inside $E^p(D_{k+1}, \phi_{k+1})$. Continuing in this manner, we obtain the diagonal subsequence $(F^{\infty}_{\ell})_{\ell \in \mathbb{N}}$; a subsequence of each $(F_{k+r, \ell})_{\ell \in \mathbb{N}}$ convergent inside $E^p(D_{k+r}, \phi_{k+r})$, for all $r \in \mathbb{N} \cup \{0\}$. Note that, each $F^{\infty}_{\ell}$ is a minimal element of some $E^p(D_r, \phi_r)$. Suppose $F^{\infty}_{\ell}$ is a minimal element of $E^p(D_{\ell+r_{\ell}}, \phi_{\ell + r_{\ell}})$. Define,
\begin{align*}
    \hat{F}^{\infty}_{\ell}(z) &\equiv F^{\infty}_{\ell}(z),\hspace{1.5mm} z \in D_{\ell + r_{\ell}}, \\
    &\equiv 0,\hspace{1.5mm}\text{otherwise}.
\end{align*}
Now $(\hat{F}^{\infty}_{\ell})_{\ell \in \mathbb{N}}$ converges uniformly on compact sets to some $F^{\infty}$, and $F^{\infty} \in \mathcal{O}(D)$. Applying Dominated Convergence Theorem (DCT), we see
\begin{align*}
\int_{D} |F^{\infty}|^p e^{-\phi} \,d \lambda^{2n} &= \lim_{\ell \to \infty} \int_D |\hat{F}^{\infty}_{\ell}|^p e^{-\phi_{\ell + r_{\ell}}} \,d \lambda^{2n} \\
&= \lim_{\ell \to \infty} \int_{D_{\ell + r_{\ell}}} |F^{\infty}_{\ell}|^p e^{-\phi_{\ell+r_{\ell}}} \,d \lambda^{2n} \\
&\leq \lim_{\ell \to \infty} \frac{4\pi}{c(\Omega,0)^2} \int_{D_{\ell + r_{\ell}} \cap H} |f(z')|^p e^{-\phi_{\ell+r_{\ell}}} \,d \lambda^{2(n-1)} \\
&\leq \frac{4\pi}{c(\Omega,0)^2} \int_{D \cap H} |f(z')|^p e^{-\phi} \,d \lambda^{2(n-1)}.
\end{align*}
This completes the proof.
\end{proof}
We denote the complex linear span of a vector $V \in \mathbb{C}^n$ by $\mathcal{L}(V)=\{\lambda V: \lambda \in \mathbb{C}\}$. As an immediate consequence of theorem \ref{th:pOT} above, we can determine the $p$-Bergman space of a pseudoconvex domain $\Omega \subset \mathbb{C}^2$ as follows.
\begin{Corollary}\label{cor:pOT}
Suppose $1\leq p < 2$. Let $G \subset \mathbb{C}^2$ be pseudoconvex and $X \in \mathbb{C}^2$ be such that the image $\pi(G) \subset \mathcal{L}(X)$ of the orthogonal projection $\pi: \mathbb{C}^2 \to \mathcal{L}(X)$ has a non-polar complement in $\mathcal{L}(X)$. Then $\dim A^p(G)$ is at least $k$ if there exists a section $L$ orthogonal to $\mathcal{L}(X)$ such that cardinality of $\mathbb{C} \setminus L$ is $k+1$. In particular, if $L$ has an infinite or non-polar complement then $\dim A^p(G)$ is infinite.
\end{Corollary}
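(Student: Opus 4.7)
The plan is to reduce the statement to the one-variable setting already treated and to lift linearly independent elements of $A^p(L)$ up to $A^p(\Omega)$ by means of the $L^p$ Ohsawa--Takegoshi extension theorem (Theorem \ref{th:pOT}).

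After an orthogonal change of coordinates in $\mathbb{C}^2$ I may assume $X=(1,0)$, so that $\mathcal{L}(X)=\mathbb{C}_{z_1}\times\{0\}$ and every section orthogonal to $\mathcal{L}(X)$ lies in an affine hyperplane $\{z_1=\mathrm{const}\}$. A translation of $\Omega$ (which does not affect $\dim A^p$) places the distinguished section on $H=\{z_1=0\}$ and arranges $0\in\Omega$; identifying $H$ with $\mathbb{C}_{z_2}$, the slice $L=\Omega\cap H$ becomes a planar set with $\mathbb{C}\setminus L=\{a_0,\ldots,a_k\}$. The hypothesis that $\pi(\Omega)$ has non-polar complement in $\mathcal{L}(X)$, where $\pi\colon\mathbb{C}^2\to\mathbb{C}_{z_1}$ is projection on the first factor, is precisely the statement $c(\pi(\Omega),0)>0$.

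By the proposition computing $\dim A^p(\Omega_k)$, the $k$ functions
$$
f_j(z_2)=\frac{1}{z_2-a_0}-\frac{1}{z_2-a_j},\qquad 1\leq j\leq k,
$$
are linearly independent in $A^p(L)$. Applying Theorem \ref{th:pOT} with $\phi\equiv 0$, $n=2$, and planar factor $\pi(\Omega)$, each $f_j$ admits a holomorphic extension $F_j\in\mathcal{O}(\Omega)$ with $F_j|_L=f_j$ and
$$
\int_\Omega |F_j|^p\,d\lambda^4 \;\leq\; \frac{4\pi}{c(\pi(\Omega),0)^2}\int_L |f_j|^p\,d\lambda^2 \;<\;\infty,
$$
so $F_j\in A^p(\Omega)$. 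Any null relation $\sum c_j F_j\equiv 0$ on $\Omega$ restricts to $\sum c_j f_j\equiv 0$ on $L$, forcing all $c_j=0$, whence $\dim A^p(\Omega)\geq k$.

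For the final clause, if $\mathbb{C}\setminus L$ is infinite the same rational-function recipe produces infinitely many linearly independent members of $A^p(L)$, each of which extends as above; if $\mathbb{C}\setminus L$ is merely non-polar, the Carleson equivalences recalled in the introduction supply a smooth bounded strictly subharmonic function on $L$ meeting the hypotheses of Theorem \ref{th:main.planar}(3), so $A^p(L)$ is infinite-dimensional and the same extension argument transports an infinite independent set into $A^p(\Omega)$. The main subtlety is the invocation of Theorem \ref{th:pOT} for unbounded $\Omega$, but the smooth exhaustion built into the proof of that theorem absorbs this issue; once $c(\pi(\Omega),0)>0$ and the normalization $0\in\Omega$ are in force, the $L^p$-estimate is automatic.
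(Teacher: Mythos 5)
Your proof is correct and follows essentially the same route as the paper's: identify the orthogonal section $L$ with a planar domain whose complement has $k+1$ points, use the proposition on $A^p(\mathbb{C}\setminus A_{k+1})$ to produce $k$ linearly independent functions there, and lift them to $A^p(\Omega)$ via Theorem \ref{th:pOT} applied with $\phi\equiv 0$ to $\Omega\subset\pi(\Omega)\times\mathbb{C}$. The paper's own proof is a terser version of exactly this argument, and your added details (coordinate normalization, the explicit basis $f_j$, and preservation of linear independence under restriction to $L$) fill it out faithfully.
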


\begin{proof}
Since $\pi(G) \subset \mathcal{L}(X)$ has non-polar complement, we think of $G \subset \pi(G) \times \mathbb{C}$ and apply theorem \ref{th:pOT}. Thus, every non-trivial $f \in A^p(L)$ can be extended to a non-trivial $F \in A^p(G)$. By theorem \ref{th:planardomain}, dimension of $A^p(L) = k$ if $\mathbb{C} \setminus L$ contains $k+1$ elements. 
\end{proof}

Weighted $p$-Bergman space of a pseudoconvex domain $\Omega \subset \mathbb{C}^n$ can be similarly compared with the weighted $p$-Bergman space of $H \cap \Omega$, where $H$ is a complex hyperplane.
\begin{Corollary}\label{cor:pOT2}
Suppose $1\leq p <2$, $\Omega \subset \mathbb{C}^n (n \geq 2)$ be a pseudoconvex domain containing the origin, $\phi \in PSH(\Omega)$, and $T: \mathbb{C}^n \to \mathbb{C}$ be a non-zero linear map such that $T(\O)$ has a non-polar complement. Then
$$
\dim A^p(\ker T \cap \Omega, \phi) \leq \dim A^p(\Omega, \phi).
$$
\end{Corollary}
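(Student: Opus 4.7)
The plan is to reduce the statement directly to the $L^p$-Ohsawa--Takegoshi extension theorem \ref{th:pOT}. Since $T \colon \mathbb{C}^n \to \mathbb{C}$ is a non-zero linear functional, a unitary change of coordinates in $\mathbb{C}^n$ (which preserves both pseudoconvexity, the class $PSH$, the weight $\phi$ up to composition, and the Lebesgue measure, and therefore preserves $A^p$ norms) brings $T$ into the form $T(z_1,\dots,z_n)=z_1$, so that $\ker T$ becomes the hyperplane $H=\{z_1=0\}$. Under this identification $\Omega \subset T(\Omega)\times\mathbb{C}^{n-1}$, and the hypothesis that $T(\Omega)\subset\mathbb{C}$ has non-polar complement is precisely the statement that $c(T(\Omega),0)>0$ (note that $0\in T(\Omega)$ since $0\in\Omega$). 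Thus $\Omega$ satisfies all of the hypotheses of Theorem \ref{th:pOT} with planar base $T(\Omega)$ and hyperplane $H$.

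Next, I would define the restriction map $R \colon A^p(\Omega,\phi)\to A^p(\ker T\cap\Omega,\phi)$ by $R(F)=F|_{\ker T\cap\Omega}$. This is clearly $\mathbb{C}$-linear, and by Theorem \ref{th:pOT} it is surjective: given any $f\in A^p(\ker T\cap\Omega,\phi)$, there exists $F\in\mathcal{O}(\Omega)$ with $F|_{\ker T\cap\Omega}=f$ and
\[
\int_{\Omega}|F|^{p}e^{-\phi}\,d\lambda^{2n}\ \leq\ \frac{4\pi}{c(T(\Omega),0)^{2}}\int_{\ker T\cap\Omega}|f|^{p}e^{-\phi}\,d\lambda^{2(n-1)}<\infty,
\]
so $F\in A^p(\Omega,\phi)$ and $R(F)=f$. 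The desired inequality
\[
\dim A^p(\ker T\cap\Omega,\phi)\ \leq\ \dim A^p(\Omega,\phi)
\]
then follows from elementary linear algebra: if $f_1,\dots,f_k$ are linearly independent in $A^p(\ker T\cap\Omega,\phi)$ and $F_1,\dots,F_k$ are extensions produced by Theorem \ref{th:pOT}, then any relation $\sum c_i F_i\equiv 0$ on $\Omega$ restricts to $\sum c_i f_i\equiv 0$ on $\ker T\cap\Omega$, forcing all $c_i=0$. Equivalently, surjectivity of $R$ yields the inequality on dimensions via the rank-nullity bookkeeping $\dim A^p(\ker T\cap\Omega,\phi)=\dim A^p(\Omega,\phi)/\ker R\leq \dim A^p(\Omega,\phi)$.

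There is no serious obstacle here beyond verifying the hypotheses of Theorem \ref{th:pOT}: the only point needing a word is the equivalence ``non-polar complement of $T(\Omega)$'' $\Longleftrightarrow$ ``$c(T(\Omega),0)>0$'', which is standard in planar potential theory and guarantees that the extension bound is finite. Everything else is a bookkeeping exercise: the change of coordinates, the identification of $\Omega$ as a subdomain of $T(\Omega)\times\mathbb{C}^{n-1}$, and the linear-algebraic conclusion from existence of a surjective linear restriction map. Consequently the entire argument is short once Theorem \ref{th:pOT} is in hand.
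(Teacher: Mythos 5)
Your proof is correct and follows essentially the same route as the paper's own (two-line) argument: reduce $T$ to the coordinate projection by a linear change of variables, observe that the non-polar complement of $T(\Omega)$ gives $c(T(\Omega),0)>0$, and apply Theorem \ref{th:pOT} to produce linearly independent extensions. The only cosmetic caveat is that the restriction map $R$ need not be well-defined on all of $A^p(\Omega,\phi)$ (the slice of an $L^p$ function need not be $L^p$ on the slice), but your linear-independence argument via the extensions $F_1,\dots,F_k$ never uses this, so the proof stands as written.
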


\begin{proof}
Making a linear change of coordinates, consider $T$ to be the projection $z \mapsto z_1$. Although $p$-Bergman spaces are not preserved by biholomorphisms, they are isomorphic to some weighted $p$-Bergman space. As $T(\Omega)$ has a non-polar complement, the result follows from theorem \ref{th:pOT} above.
\end{proof}

Now we seek to extend holomorphic functions defined on hypersurfaces that are seen as complex analytic subvarieties of the pseudoconvex domains with estimates. The main ingredient in the next result is the following result of Ohsawa (see Theorem 1.1 in \cite{ohsawa2017extension}, or Theorem 7 in \cite{boudreaux2021dimension}): 
\begin{Theorem}[\textbf{Ohsawa '17}]
Let $G \subset \mathbb{C}^M$ be pseudoconvex, $\phi \in PSH(G)$, and $g \in \mathcal{O}(G)$ be such that $0 \in g(G)$, and $\mathbb{C} \setminus g(G)$ is non-polar. Moreover, suppose the restriction of $dg$  on any irreducible component of $Y=g^{-1}(\{0\})$ is not identically zero. Then for any holomorphic $(M-1)$-form $F$ on $Y_0:= Y \setminus \text{sing}(Y)$ with
$$
\left| \int_{Y_0} e^{-\phi} F \wedge \bar F \right| < \infty,
$$
there exists a holomorphic $M$-form $\tilde{F}$ on $G$ such that $\tilde{F} = dg \wedge F$ at any point of $Y_0$ and
$$
\left| \int_G e^{-\phi} \tilde{F} \wedge \bar{\tilde{F}} \right| \leq 2 \pi \left| \int_{Y_0} e^{-\phi} F \wedge \bar F \right|.
$$
\end{Theorem}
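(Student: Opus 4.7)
The plan is to mimic the strategy used for Theorem \ref{th:pOT}: exhaust $G$ by bounded pseudoconvex $G_k$ with smooth plurisubharmonic $\phi_k \searrow \phi$, construct minimal extensions on each $G_k$, and pass to the limit by a diagonal normal-family argument. On each $G_k$ let $Y_k := Y \cap G_k$ and $Y_{k,0} := Y_0 \cap G_k$, and consider the class $E_k$ of holomorphic $M$-forms $\tilde F$ on $G_k$ satisfying $\tilde F = dg \wedge F$ pointwise on $Y_{k,0}$ with finite $\phi_k$-weighted $L^2$-norm $\int_{G_k} e^{-\phi_k}\tilde F \wedge \overline{\tilde F}$. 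The boundedness of $G_k$ together with the smoothness of $\phi_k$ makes $E_k$ a closed normal family under the $L^2$-topology, so a minimizer $\tilde F_k$ exists provided $E_k \neq \emptyset$.

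The existence step is the heart of the argument. The non-polarity of $\mathbb{C}\setminus g(G)$ gives a bounded subharmonic function on $g(G)$ and, via pullback, a bounded plurisubharmonic function $\psi$ on $G$ encoding a negative Green's function along the fibers of $g$; this $\psi$ supplies the quantitative capacity input corresponding to the constant $2\pi$. Near a regular point of $Y$ one picks coordinates so that $g$ is literally the first coordinate and reduces to the planar Ohsawa--Takegoshi setup as in Theorem \ref{th:OTL^2}: a cutoff in the normal direction $\chi$ vanishing away from $Y$ gives an initial extension $\chi \cdot g^{-1}\cdot(dg\wedge F)$, and one solves a twisted $\bar\partial$-equation $\bar\partial u = \bar\partial(\chi\cdot g^{-1}\cdot dg\wedge F)$ with weight $\phi + \psi$ (possibly tuned by a small $-\log(-\psi)$ factor, in the Berndtsson--Chen style) to correct it to a global holomorphic $M$-form $\tilde F$. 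The resulting $L^2$-estimate combines H\"ormander's $L^2$-inequality with the boundedness of $\psi$ to produce the factor $2\pi$ rather than $4\pi/c(\Omega,0)^2$, because the cutoff can be localized to the zero fiber $\{g=0\}$ where the exponential weight is integrated against the natural measure from the Green's function.

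Once $\tilde F_k \in E_k$ is constructed, the minimality and the uniform bound $\int_{G_k} e^{-\phi_k}\tilde F_k \wedge \overline{\tilde F_k} \leq 2\pi \int_{Y_0} e^{-\phi}F\wedge \overline F$ (obtained by feeding a test extension back through the minimality inequality just as in the proof of Theorem \ref{th:pOT}) allows the standard diagonal sequence argument to produce a limiting holomorphic $M$-form $\tilde F$ on $G$. Dominated convergence on compact subsets (together with $\phi_k \searrow \phi$ and Fatou/monotone convergence for the weights) yields the claimed global estimate. Finally, the extension across $\mathrm{sing}(Y)$ is automatic: the $L^2$ bound against $e^{-\phi}$ forces $\tilde F$ to extend holomorphically across the analytic set $\mathrm{sing}(Y)$ of codimension $\geq 2$ in $Y$, hence of codimension $\geq 3$ in $G$, by a standard removable singularity theorem.

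The main obstacle I anticipate is the existence step: constructing a single extension with the correct $L^2$-bound near $\mathrm{sing}(Y)$, where the local reduction to a coordinate hyperplane breaks down. The cleanest resolution is to solve the $\bar\partial$-problem on $G_k \setminus \mathrm{sing}(Y)$ first --- which is still pseudoconvex up to sets of measure zero once one uses an auxiliary plurisubharmonic function blowing up along $\mathrm{sing}(Y)$ --- and then argue removability on the singular locus using the finiteness of the $L^2$-norm, rather than trying to handle the singularities simultaneously with the extension estimate. The precise identification of the constant $2\pi$, which does not depend on any capacity of $\mathbb{C}\setminus g(G)$, will require careful bookkeeping in the twisted Bochner--Kodaira identity associated to $\psi$.
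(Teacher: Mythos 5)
The first thing to say is that the paper contains no proof of this statement: it is imported verbatim as Theorem 1.1 of Ohsawa's paper \cite{ohsawa2017extension} (equivalently Theorem 7 of \cite{boudreaux2021dimension}) and is used purely as a black box to derive the $L^p$-analog, Theorem \ref{th:pOhsawa1}. So there is no internal argument to compare yours against; the only fair comparison is between your sketch and Ohsawa's actual proof, and as a standalone proof your proposal has a genuine gap at its central step.

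The exhaustion by bounded pseudoconvex $G_k$ with smooth $\phi_k \searrow \phi$, the minimal-extension device, and the diagonal normal-family limit are exactly the scaffolding this paper uses elsewhere --- but only to upgrade an already-known bounded-domain $L^2$ extension theorem to an $L^p$ one (Theorems \ref{th:pOT} and \ref{th:pOhsawa1}). That machinery \emph{presupposes} the $L^2$ existence result with the stated constant on each $G_k$; it cannot produce it. Your existence step is where the theorem actually lives, and it is not carried out: you assert that a cutoff plus a twisted $\bar\partial$-equation with weight $\phi+\psi$ ``produces the factor $2\pi$,'' but obtaining the constant $2\pi$ --- uniform in $G$, $g$, $\phi$, and in particular independent of any capacity of $\mathbb{C}\setminus g(G)$ --- is the entire content of Ohsawa's paper. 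Note the contrast with the Dinew version quoted as the input to Theorem \ref{th:pOT}, whose constant $4\pi/c(\Omega,0)^2$ \emph{does} degenerate with the capacity; the reason Ohsawa's constant does not is the normalization $\tilde F = dg\wedge F$, in which the Jacobian factor $dg$ is absorbed into the extended form, and your sketch never engages with how that normalization interacts with the non-polarity hypothesis. Deferring this to ``careful bookkeeping in the twisted Bochner--Kodaira identity'' is deferring the whole proof. Two secondary issues: for a hypersurface $Y$ the singular locus has codimension $\geq 2$ in $G$, not $\geq 3$ (take $Y=\{z_1z_2=0\}\subset\mathbb{C}^2$, so $\mathrm{sing}(Y)=\{0\}$); codimension $2$ plus the local $L^2$ bound still gives removability, but the claim as written is false. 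And the local reduction ``$g$ is literally the first coordinate'' is valid only where $dg\neq 0$, so assembling the local extensions into a single global competitor satisfying the additive $2\pi$ bound requires an argument you have not supplied.
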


As an application of Ohsawa's theorem certain holomorphic functions defined on the non-singular subset $Y_0$ of a co-dimension one complex analytic subvariety $Y$ of $G$ can be extended to a holomorphic function on $G$. We denote the volume form on $\mathbb{C}^M$ by $dz\wedge d\bar{z}$.
\begin{Theorem}\label{th:pOhsawa1}
Suppose $1 \leq p < 2$. Let $G \subset \mathbb{C}^M$ be a pseudoconvex domain, $\phi$ be a plurisubharmonic function, and $g$ be a holomorphic function such that $\mathbb{C} \setminus g(G)$ is non-polar, $0\in g(G)$. Furthermore suppose there are no irreducible components of $Y = \{g^{-1}(0)\}$ on which $dg$ is identically zero, and $Y_0=Y \setminus sing(Y)$. Then for each $f \in \mathcal{O}(Y_0)$ with $f i\lrcorner dz$ being a holomorphic $(M-1)$-form on $Y_0$ satisfying
$$
\int_{Y_0} |f|^p e^{-p \phi} i \lrcorner dz \wedge \overline{i \lrcorner dz} < \infty 
$$
there exists a holomorphic extension $F \in \mathcal{O}(G)$ such that 
$$ 
F dz = f i \lrcorner dz \wedge dg
$$
on $Y_0$, and
$$
\int_{G} |F|^p e^{-p \phi} \,d \lambda^{2M} \leq 2 \pi \int_{Y_0} |f|^p e^{-p \phi} \, i \lrcorner dz \wedge \overline{i \lrcorner dz},
$$
where $i:Y_0 \hookrightarrow G$ is the inclusion, and $i \lrcorner dz$ is the interior product.
\end{Theorem}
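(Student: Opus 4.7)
The plan is to mimic the proof of Theorem~\ref{th:pOT}: build a pseudoconvex exhaustion, on each level extract a minimal $L^p$-extension via Chen's $p$-Bergman inequality, invoke the classical $L^2$-Ohsawa theorem with a weight twisted by $\log|F_k|$ to manufacture a competitor extension, and conclude via a reverse Hölder inequality together with a diagonal limit.

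Set up a smooth plurisubharmonic exhaustion $\{(G_k, \phi_k)\}$ with $\overline{G}_k \subset G_{k+1}$, $\bigcup_k G_k = G$, and $\phi_k \in \mathcal{C}^\infty(\overline{G}_k) \cap PSH(G_k)$ satisfying $\phi_k \searrow \phi$ uniformly on compact subsets. Define on each level
\[
E^p(G_k, \phi_k) := \left\{ F \in \mathcal{O}(G_k) : F\, dz = f\, i\lrcorner dz \wedge dg \text{ on } Y_0 \cap G_k,\ \|F\|_{p, \phi_k} < \infty \right\},
\]
where $\|F\|_{p,\phi_k}^p := \int_{G_k} |F|^p e^{-p\phi_k}\, d\lambda^{2M}$. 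Non-emptiness follows from the classical ($L^2$) Ohsawa theorem applied on $G_k$: boundedness of $G_k$ and smoothness of $\phi_k$ upgrade any $L^2$-extension to an $L^p$-extension for $1 \leq p \leq 2$. Chen's $p$-Bergman inequality makes sublevel sets of $\|\cdot\|_{p,\phi_k}$ normal families, so a minimizer $F_k \in E^p(G_k, \phi_k)$ exists.

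Introduce the twisted plurisubharmonic weight $\Phi_k := p\phi_k + (2-p)\log|F_k|$ (plurisubharmonic since $2-p \geq 0$ and $\log|F_k| \in PSH(G_k)$) and apply the $L^2$-Ohsawa theorem to the $(M-1)$-form $f\, i\lrcorner dz$ on $Y_0 \cap G_k$ with weight $\Phi_k$. This produces an $M$-form $h\, dz \in \mathcal{O}(G_k)$ with $h\, dz = f\, i\lrcorner dz \wedge dg$ on $Y_0$ and
\[
\int_{G_k} |h|^2 |F_k|^{-(2-p)} e^{-p\phi_k}\, d\lambda^{2M} \leq 2\pi \int_{Y_0 \cap G_k} |f|^p e^{-p\phi_k}\, i\lrcorner dz \wedge \overline{i\lrcorner dz},
\]
where the simplification on the right uses that on $Y_0$ the wedge identity $F_k\, dz = f\, i\lrcorner dz \wedge dg$ forces $|F_k| = |f|$ (with $i\lrcorner dz$ normalized so that $i\lrcorner dz \wedge dg = dz$), giving $|f|^2 |F_k|^{-(2-p)} = |f|^p$. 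A reverse Hölder inequality with conjugate exponents $2/p$ and $2/(2-p)$, applied as in Theorem~\ref{th:pOT}, then yields
\[
\|h\|_{p,\phi_k}^2 \leq \|F_k\|_{p,\phi_k}^{2-p} \int_{G_k} |h|^2 |F_k|^{-(2-p)} e^{-p\phi_k}\, d\lambda^{2M},
\]
so $h \in E^p(G_k, \phi_k)$ and by minimality $\|F_k\|_{p,\phi_k} \leq \|h\|_{p,\phi_k}$. Combining the three displays and cancelling $\|F_k\|_{p,\phi_k}^{2-p}$ produces the level-$k$ bound $\|F_k\|_{p,\phi_k}^p \leq 2\pi \int_{Y_0 \cap G_k} |f|^p e^{-p\phi_k}\, i\lrcorner dz \wedge \overline{i\lrcorner dz}$.

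The diagonal sequence argument of Theorem~\ref{th:pOT}, combined with dominated/Fatou convergence as $\phi_k \searrow \phi$ and $G_k \nearrow G$, then extracts a holomorphic limit $F$ on $G$ with the asserted $L^p$-estimate. The main obstacle I anticipate is the form-theoretic bookkeeping on $Y_0$: one must interpret the wedge identity $F\, dz = f\, i\lrcorner dz \wedge dg$ and the residue-type volume form $i\lrcorner dz \wedge \overline{i\lrcorner dz}$ carefully so that the trace $F_k|_{Y_0}$ really does satisfy $|F_k| = |f|$ and no stray Jacobian factors spoil the passage from the twisted $L^2$-Ohsawa estimate to the clean $|f|^p$ right-hand side. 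A secondary, standard subtlety is that $\log|F_k|$ equals $-\infty$ on $\{F_k = 0\}$, which forces the twisted Ohsawa extension $h$ to vanish on that set; since $F_k$ itself vanishes there, the wedge prescription $h\, dz = f\, i\lrcorner dz \wedge dg$ remains consistent and no data is lost in the extension problem.
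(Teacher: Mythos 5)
Your proposal is correct in outline, and it shares the paper's overall architecture (smooth psh exhaustion, minimal $L^p$-extensions $F_k$ on bounded levels via Chen's $p$-Bergman inequality, the twisted weight $p\phi_k+(2-p)\log|F_k|$ fed into the $L^2$-Ohsawa theorem, and a diagonal-sequence limit). But the key transfer step is genuinely different. You import the mechanism of Theorem \ref{th:pOT}: take \emph{any} element $h$ of the twisted $L^2$ class, use the (reverse) H\"older inequality to show $h\in E^p(G_k,\phi_k)$ with $\|h\|_{p,\phi_k}^{2}\leq \|F_k\|_{p,\phi_k}^{\,2-p}\int|h|^2|F_k|^{p-2}e^{-p\phi_k}$, and then invoke minimality of $F_k$ and cancel. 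The paper instead works with the \emph{minimizer} $F_0$ of the twisted $L^2$ class, derives Euler--Lagrange orthogonality identities for both $F_k$ (in $L^p$) and $F_0$ (in the twisted $L^2$ norm), subtracts them, and concludes $F_k=F_0$, after which the bound is immediate because $|F_0|^2|F_k|^{p-2}=|F_k|^p$. Your route is arguably cleaner: it needs only the existence of one twisted extension rather than a minimizer, and it sidesteps the differentiation $\frac{d}{dt}\big|_{t=0}\int|F_k+th|^p$, which for $p<2$ requires justification near the zero set of $F_k$; what the paper's variational route buys in exchange is the identification of the $L^p$-minimal extension with the twisted $L^2$-minimal one (a structural fact it also records at the end when checking that $F^{\infty}$ is minimal). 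Two small caveats on your write-up: the form-theoretic identification of the trace ($|F_k|=|f|$ on $Y_0$ under the wedge normalization) is handled no more rigorously in the paper, which simply writes $F|_{Y_0}\equiv f$ in the definitions of $E^p$ and $E^2$, so you are at parity there; and your side remark that the twisted weight forces $h$ to vanish on $\{F_k=0\}$ is not accurate for $p$ near $2$ (the singularity $|F_k|^{-(2-p)}$ is then mild), but nothing in the argument depends on it.
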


\begin{proof}
Pick a point $z_0 \in Y_0$, and suppose $f(z_0)=1$. Suppose $\{G_k \subset G: k \in \mathbb{N}\}$ is a family of bounded pseudoconvex domains such that
\begin{itemize}
    \item $z_0 \in G_k$,
    \item $\overline{G_k} \subset G_{k+1}$,
    \item $\cup_k G_k = G$.
\end{itemize}

Let $\phi_k \in \mathcal{C}^{\infty}(\overline{G_k}) \cap PSH(G_k)$ be smooth plurisubharmonic functions on $G_k$ that decrease $\phi_k(z) \searrow\phi(z)$ to the plurisubharmonic function $\phi$ pointwise on $G$. Define
$$
E^p(G_k, p\phi_k):= \{F \in \mathcal{O}(G_k): \int_{G_k} |F|^p e^{-p\phi_k} \,d \lambda^{2M} < \infty, F|_{Y_0} \equiv f \}.
$$
Let us denote the weighted $L^p$-norm by $\|.\|_{p,\phi_k}$.
By the theory of Oka and Cartan (see the book \cite{grauerttheory}) $f$ admits extensions to holomorphic functions on $G$. The boundedness of $G_k$ and smoothness of $\phi_k$ on $\bar G_k$ ensures that each such extension lies in $E^p(G_k, p\phi_k)$. In particular, $E^p(G_k, \phi_k)$ is non-empty. For $p \geq 1$, the Bergman inequality for weighted $p$-Bergman spaces (see proposition $2.1$ of \cite{Chen2021OnTheory}) implies that $E^p(G_k,p\phi_k)$ is a normal family. Suppose $F_k \in E^p(G_k, \phi_k)$ is a minimal element with respect to $\|.\|_{p,\phi_k}$, i.e.
$$
\|F_k\|_{p,\phi_k} \leq \|F\|_{p, \phi_k},
$$
for all $F \in E^p(G_k, \phi_k)$. Note that by proposition $2.5$ of \cite{Chen2021OnTheory} such minimal elements of weighted $p$-Bergman spaces of bounded domains $G_k$ are unique. So, for all $h \in A^p(G_k, p\phi_k)$ with $h \equiv 0$ on $Y_0 \cap G_k$ the minimality of $F_k$ implies
$$
\|F_k \|_{p,\phi_k} \leq \|F_k + th\|_{p, \phi_k},
$$
for all $t \in \mathbb{C}$. Moreover, as the $F_k$ is minimal, we obtain
\begin{equation}\label{eq:L^pmin}
\begin{aligned}
\left. \frac{d}{dt}\right|_{t=0} \left( \int_{G_k} |F_k + th|^p e^{-p \phi_k} \,d \lambda^{2M} \right) &= {\frac{p}{2}}  \int_{G_k} |F_k|^{p-2} h \bar{F}_k e^{-p \phi_k} \,d \lambda^{2M} =0  
\end{aligned}  
\end{equation}
This motivates us to consider the space
$$
E^2(G_k, \hat{\phi}_k):= \{F\in \mathcal{O}(G_k): \int_{G_k} |F|^2 e^{-p \phi_k -(2-p) \log |F_k|} \,d \lambda^{2M} < \infty, F|_{Y_0} \equiv f\},
$$
where we denote the adjusted weight by $\hat{\phi}_k = p \phi_k + (2-p) \log |F_k|$, and the weighted $L^2$-norm by $\|.\|_{2,\hat{\phi}_k}$. Applying Ohsawa's theorem on the co-dimension one complex analytic subvariety $Y \cap G_k =\{(g|_{G_k})^{-1}(0)\}$ of the domain $G_k$, there exists $F \in \mathcal{O}(G_k)$ satisfying
$$
\int_{G_k} |F|^2 e^{-p \phi_k -(2-p) \log |F_k|} \,d \lambda^{2M} \leq 2 \pi \int_{Y_0 \cap G_k} |f|^p e^{-p \phi_k} \,i \lrcorner dz \wedge \overline{i\lrcorner dz}. 
$$
Thus $E^2(G_k, p\phi +(2-p) \log |F_k|)$ is non-empty. Since $\hat{\phi}_k$ is an admissible weight (see theorem 3.1 in \cite{pasternak1990dependence}), the Bergman inequality allows us to consider minimal elements of $E^2(G_k, \hat{\phi}_k)$ with respect to the weighted norm $\|.\|_{2, \hat{\phi}_k}$. Say, $F_0$ is the minimal element in $E^2(G_k, \hat{\phi}_k)$.

Firstly, we note that $F_0 \in E^p(G_k, \phi_k)$ since
\begin{equation}\label{eq:impH\"older}
\int_{G_k} |F_0|^p e^{-p\phi_k} \,d \lambda^{2M} \leq \left( \int_{G_k} |F_0|^2 |F_k|^{p-2} e^{-p\phi_k} \,d \lambda^{2M} \right)^{p/2} \left( \int_{G_k} |F_k|^p e^{-p \phi_k} \,d \lambda^{2M} \right)^{1-p/2}.
\end{equation}
The minimality of $F_0$ implies
\begin{equation}\label{eq:L^2min}
\left. \frac{d}{dt}\right|_{t=0} \left( \int_{G_k} |F_0 + th|^2 |F_k|^{p-2} e^{-p \phi_k} \,d \lambda^{2M} \right) = \int_{G_k} h \bar{F}_0 |F_k|^{p-2} e^{-p \phi_k} \,d \lambda^{2M} =0.  
\end{equation}
By equations (\ref{eq:L^pmin}) and (\ref{eq:L^2min}), we obtain
\begin{equation}
\begin{aligned}
\int_{G_k} |F_k|^{p-2} h(\overline{F_k - F_0}) e^{-p\phi_k} \,d \lambda^{2M} &= 0,
\end{aligned}
\end{equation}
and setting $h= F_k -F_0$ in a sufficiently small neighborhood of $z_0$ in $G_k$ this forces $F_k = F_0$ and thus on entire $G_k$. Hence, on the bounded pseudoconvex domain $G_k$ we obtain
\begin{equation}\label{eq: OTbdd}
\begin{aligned}
\int_{G_k} |F_k|^p e^{-p\phi_k} \,d \lambda^{2M} &\leq 2 \pi \int_{Y_0} |f|^p e^{-p \phi} \,i\lrcorner dz \wedge \overline{i\lrcorner dz}.
\end{aligned}
\end{equation}
Note that the right hand side of (\ref{eq: OTbdd}) is an uniform bound independent of $G_k$.

To complete the proof, we employ a standard diagonal sequence argument. By construction $F_k$ are the minimal elements of each $E^p(G_k, p\phi_k)$. Consider the sequence $(F_n)_{n \geq k} \subset E^p(G_k, \phi_k)$. Note that
\begin{align*}
\int_{G_k} |F_{k+r}|^p e^{-p\phi_k} \,d \lambda^{2M} &\leq \int_{G_k} |F_{k+r}|^p e^{-p \phi_{k+r}} \,d \lambda^{2M} \\
&\leq \int_{G_{k+r}} |F_{k+r}|^p e^{-p \phi_{k+r}} \,d \lambda^{2M} \\
&\leq 2\pi \int_{Y_0 \cap G_{k+r}} |f|^p e^{-p \phi_{k+r}} \,i\lrcorner dz \wedge \overline{i\lrcorner dz} \\
&\leq 2\pi \int_{Y_0} |f|^p e^{-p \phi} \,i\lrcorner dz \wedge \overline{i\lrcorner dz}.
\end{align*}
Using the Bergman inequality for $p$-Bergman spaces, the family $(F_n)_{n \geq k} \subset E^p(G_k, p \phi_k)$ forms a normal family. Hence, $(F_n)_{n \geq k}$ has a subsequence $(F^k_n)_{n \in \mathbb{N}}$ convergent uniformly on compact sets inside $G_k$. In the next step, inside $E^p(G_{k+1}, \phi_{k+1})$ we consider the sequence $(F^k_n)_{n \in \mathbb{N}}$, and again due to the Bergman inequality $(F^k_n)_{n \in \mathbb{N}}$ forms a normal family allowing us to extract a convergent subsequence $(F^{k+1}_n)_{n \in \mathbb{N}} \subset E^p(G_{k+1}, p \phi_{k+1})$ of $(F^k_n)_{n \in \mathbb{N}}$. In particular, for each $r \in \mathbb{N}$, we denote $(F^r_n)_{n \in \mathbb{N}} \subset E^p(G_r, p \phi_r)$ to be a normal family of minimal elements that admits a convergent subsequence converging uniformly on compact subsets inside $G_r$. Continuing in this manner we obtain the diagonal subsequence which is a subsequence of $(F^r_n)_{n \in \mathbb{N}}$ for every $r \in \mathbb{N}$. We denote the diagonal sequence by $(F^{\infty}_n)_{n \in \mathbb{N}}$. 

Define $\hat{F}_k$, complex valued on $G$ as
\begin{align*}
\hat{F}_k &\equiv F_k, \hspace{1mm} \text{on} \hspace{1mm} G_k, \\
&= 0, \hspace{1mm} \text{on} \hspace{1mm} G \setminus G_k,
\end{align*}
for all $k \in \mathbb{N}$. Similarly, by $\hat{F}^{\infty}_k$ we mean complex-valued functions that are defined as constant zero outside of the $G_{k_{\ell}}$ where it is holomorphic. Here we are assuming $F^{\infty}_k$ is the minimal element of $E^p(G_{k_{\ell}}, p \phi_{k_{\ell}})$. It again follows that
$$
\int_G |\hat{F}^{\infty}_{k}|^p e^{-p \phi_{k_{\ell}}} \,d \lambda^{2M} \leq 2\pi \int_{Y_0} |f|^p e^{-p \phi} \,i\lrcorner dz \wedge \overline{i\lrcorner dz},
$$
making the diagonal sequence a normal family. Suppose $F^{\infty}$ is a subsequential limit of the diagonal sequence $(\hat{F}^{\infty}_n)_{n \in \mathbb{N}}$. Being a uniform limit of holomorphic functions on compact subsets of $G$, the function $F^{\infty}$ must be holomorphic in $G$. Using DCT it follows
$$
\lim_{k \to \infty} \int_{G} |\hat{F}^{\infty}_k|^p e^{-p \phi_{k_{\ell}}} \,d \lambda^{2M} = \int_G |F^{\infty}|^p e^{-p\phi} \,d \lambda^{2M} \leq 2\pi \int_{Y_0} |f|^p e^{-p \phi} \,i\lrcorner dz \wedge \overline{i\lrcorner dz},
$$
proving $F^{\infty} \in A^p(G,p\phi)$ to be a holomorphic extension of $f \in A^p(Y_0, p \phi)$.

Finally, we argue that $F^{\infty}$ is indeed the minimal element of $E^p(G, p\phi)$. Suppose $F^0$ is the minimal element of $E^p(G,p\phi)$. Then
$$
\int_{G_{k_{\ell}}} |F^{\infty}_k|^p e^{-p \phi_{k_{\ell}}} \,d \lambda^{2M} \leq \int_{G_{k_{\ell}}} |F^0|^p e^{-p \phi} \,d \lambda^{2M} \leq \int_G |F^0|^p e^{-p\phi} \,d \lambda^{2M} \leq \int_G |F^{\infty}|^p e^{-p\phi} \,d \lambda^{2M},
$$
for all $k_{\ell} \in \mathbb{N}$ and by DCT we have $\|F^0\|_{p,\phi} = \|F^{\infty}\|_{p,\phi}$ on $G$.
\end{proof}

The following is an obvious consequence of theorem \ref{th:pOhsawa1} above.
\begin{Corollary}\label{cor:Ohsawa1}
Let $1\leq p \leq 2$, $G \subset \mathbb{C}^M$ be a pseudoconvex domain, $g \in \mathcal{O}(G)$  with $\mathbb{C} \setminus g(G)$ being non-polar and $dg$ is not identically zero on any irreducible component of $g^{-1}(\{0\})=Y$, and $Y_0=Y \setminus sing(Y)$. Then for any $\phi \in PSH(G)$, we have $\dim A^p(Y_0,\phi) \leq \dim A^p(G,\phi)$. 
\end{Corollary}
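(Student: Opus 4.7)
My plan is to derive this dimension inequality directly from theorem \ref{th:pOhsawa1} by showing that the extension procedure it supplies takes linearly independent families in $A^p(Y_0,\phi)$ to linearly independent families in $A^p(G,\phi)$. The case $\dim A^p(G,\phi) = \infty$ is vacuous, so it suffices to transfer any finite linearly independent family through the extension.

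Given linearly independent $f_1,\dots,f_k \in A^p(Y_0,\phi)$, I will apply theorem \ref{th:pOhsawa1} with $\phi/p$ (which is plurisubharmonic since $p \geq 1$) in the role of the theorem's weight, so that the integrand weight $e^{-p(\phi/p)} = e^{-\phi}$ matches the paper's convention $A^p(\cdot,\phi) = \{F : \int |F|^p e^{-\phi} < \infty\}$. For each $i$ this produces a holomorphic $F_i \in \mathcal{O}(G)$ satisfying
$$
F_i\, dz = f_i \cdot i\lrcorner dz \wedge dg \quad \text{on } Y_0, \qquad \int_G |F_i|^p e^{-\phi}\,d\lambda^{2M} \leq 2\pi \int_{Y_0} |f_i|^p e^{-\phi}\, i\lrcorner dz \wedge \overline{i\lrcorner dz} < \infty,
$$
so $F_i \in A^p(G,\phi)$.

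To see that $\{F_i\}$ is linearly independent in $A^p(G,\phi)$, I will suppose $\sum_i a_i F_i \equiv 0$ on $G$, restrict the identity to $Y_0$, and use linearity of the wedge product to obtain $\bigl(\sum_i a_i f_i\bigr) \cdot i\lrcorner dz \wedge dg = 0$ on $Y_0$. The hypothesis that $dg$ does not vanish identically on any irreducible component of $Y$ guarantees that $\{dg \neq 0\} \cap Y_0$ is a dense open subset of $Y_0$, and on this set $i\lrcorner dz \wedge dg$ is a non-vanishing top-form; this forces $\sum_i a_i f_i \equiv 0$ there, and hence on all of $Y_0$ by holomorphy on each irreducible component. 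Linear independence of $\{f_i\}$ then yields $a_i = 0$ for every $i$, and since $k$ was arbitrary we conclude $\dim A^p(Y_0,\phi) \leq \dim A^p(G,\phi)$.

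The main obstacle I anticipate is the form-theoretic check that where $dg \neq 0$ the form $i\lrcorner dz \wedge dg$ is a non-vanishing top-form, so that the identity $\bigl(\sum_i a_i f_i\bigr) \cdot i\lrcorner dz \wedge dg = 0$ can be cancelled to conclude $\sum_i a_i f_i = 0$. This is a local multilinear-algebra computation: near a regular point of $Y$ one completes $g$ to a holomorphic coordinate system on $G$ and verifies the non-vanishing directly. Once this is granted, the rest of the argument is a straightforward transfer of linear independence through the (not necessarily linear) extension map, and no further analytic input beyond theorem \ref{th:pOhsawa1} is required.
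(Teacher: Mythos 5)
Your proposal is correct and is precisely the argument the paper leaves implicit: the paper states Corollary \ref{cor:Ohsawa1} as an ``obvious consequence'' of Theorem \ref{th:pOhsawa1} and gives no proof, and extending a linearly independent family member-by-member, then recovering $\sum_i a_i f_i \equiv 0$ from the restriction identity $F_i\,dz = f_i\, i\lrcorner dz \wedge dg$ on the dense open subset of $Y_0$ where $dg \neq 0$, is exactly the intended route. Your normalization of the weight (applying the theorem with $\phi/p$) correctly reconciles the $e^{-p\phi}$ convention of Theorem \ref{th:pOhsawa1} with the $e^{-\phi}$ convention of $A^p(\cdot,\phi)$, a point the paper glosses over.
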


In the following, we prove another $L^p$-analog of an extension theorem of Ohsawa (theorem $4.1$, \cite{ohsawa2017extension}).
\begin{Theorem}\label{th:pOhsawa2}
Suppose $G \subset \mathbb{C}^M$ be pseudoconvex, and $\phi \in PSH(G)$. For any $\alpha > 0$ and for any holomorphic $f$ on $G'= G \cap \{z_M =0\}$, there exists $\tilde{f} \in \mathcal{O}(G)$, an extension of $f$ satisfying
$$
\int_G |\tilde{f}|^p e^{-\phi- \alpha |z_M|^2} \,d \lambda^{2M} \leq \frac{\pi}{\alpha} \int_{G'} |f|^p e^{-\phi} \,d \lambda^{2(M-1)}.
$$
\end{Theorem}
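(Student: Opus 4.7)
The plan is to follow the minimal-extension template established in the proof of Theorem \ref{th:pOhsawa1}, inspired by Chen \cite{Chen2021OnTheory} and Xiong \cite{xiong2023minimal}, using the classical $L^2$-version of this extension theorem (Theorem $4.1$ of \cite{ohsawa2017extension}) as input and upgrading it to an $L^p$-estimate for $1 \leq p \leq 2$ via a weight perturbation and H\"older's inequality. The $L^2$-input produces, for any $f \in \mathcal{O}(G')$ with $\int_{G'}|f|^2 e^{-\phi}\,d\lambda^{2(M-1)} < \infty$, an extension $\tilde{f} \in \mathcal{O}(G)$ satisfying the target bound with $p$ replaced by $2$. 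We may assume the right-hand side of the desired estimate is finite, for otherwise Cartan's Theorem B supplies some holomorphic extension and there is nothing to prove.

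Pick a family of bounded pseudoconvex domains $\{G_k\}$ with $\overline{G_k} \subset G_{k+1}$ and $\bigcup_k G_k = G$, and smooth plurisubharmonic $\phi_k \in \mathcal{C}^{\infty}(\overline{G_k}) \cap PSH(G_k)$ with $\phi_k \searrow \phi$. Writing $G_k' := G_k \cap \{z_M = 0\}$, consider
$$
E^p(G_k, \phi_k + \alpha|z_M|^2) := \Bigl\{ F \in \mathcal{O}(G_k) \colon F|_{G_k'} \equiv f,\ \int_{G_k} |F|^p e^{-\phi_k - \alpha|z_M|^2}\,d\lambda^{2M} < \infty \Bigr\}.
$$
Boundedness of $G_k$ together with smoothness of $\phi_k + \alpha|z_M|^2$ on $\overline{G_k}$ forces every holomorphic extension (which exists by Cartan's Theorem B) to lie in this set; the $p$-Bergman mean-value inequality (proposition $2.1$ of \cite{Chen2021OnTheory}) shows it is a normal family, and hence it admits a minimal element $F_k$ with respect to $\|\cdot\|_{p,\phi_k + \alpha|z_M|^2}$.

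The key step is the weight adjustment. Set $\hat{\phi}_k := \phi_k + (2-p)\log|F_k|$, which is plurisubharmonic since $2-p \geq 0$ and $\log|F_k|$ is PSH, and which is admissible in the sense of \cite{pasternak1990dependence}. Because $F_k|_{G_k'} = f$, on $G_k'$ we have $|f|^2 e^{-\hat{\phi}_k} = |f|^p e^{-\phi_k}$; applying Ohsawa's Theorem $4.1$ with weight $\hat{\phi}_k$ and parameter $\alpha$ produces $F \in \mathcal{O}(G_k)$ with $F|_{G_k'} = f$ and
$$
\int_{G_k} |F|^2 |F_k|^{p-2} e^{-\phi_k - \alpha|z_M|^2}\,d\lambda^{2M} \leq \frac{\pi}{\alpha} \int_{G_k'} |f|^p e^{-\phi_k}\,d\lambda^{2(M-1)}.
$$
H\"older's inequality with exponents $\tfrac{2}{p}$ and $\tfrac{2}{2-p}$ applied to the factorization $|F|^p = \bigl(|F|^2 |F_k|^{p-2}\bigr)^{p/2}\bigl(|F_k|^p\bigr)^{(2-p)/2}$, exactly as in (\ref{eq:impHolder}), shows that $F \in E^p(G_k, \phi_k + \alpha|z_M|^2)$; combined with the minimality $\|F_k\|_{p,\phi_k+\alpha|z_M|^2} \leq \|F\|_{p,\phi_k+\alpha|z_M|^2}$ and a rearrangement in $\|F_k\|_{p,\phi_k + \alpha|z_M|^2}$, this yields
$$
\int_{G_k} |F_k|^p e^{-\phi_k - \alpha|z_M|^2}\,d\lambda^{2M} \leq \frac{\pi}{\alpha} \int_{G_k'} |f|^p e^{-\phi_k}\,d\lambda^{2(M-1)}.
$$

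The standard diagonal subsequence extraction carried out at the end of the proof of Theorem \ref{th:pOhsawa1}, combined with monotone convergence for $\phi_k \searrow \phi$ and dominated convergence on each $G_k$, then produces a holomorphic $\tilde{f} \in \mathcal{O}(G)$ extending $f$ and satisfying the desired bound. The main technical obstacle is the same one resolved in the proof of Theorem \ref{th:pOhsawa1}: one must verify that the modified weight $\hat{\phi}_k$ is genuinely admissible so that the $L^2$-Ohsawa input applies, and then close the loop between the $L^2$-bound and the target $L^p$-bound via H\"older combined with the minimality of $F_k$. The restriction $p \leq 2$ is essential precisely here, since only then is $(2-p)\log|F_k|$ plurisubharmonic.
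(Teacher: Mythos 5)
Your proposal is correct and follows essentially the same route as the paper's proof: exhaust $G$ by bounded pseudoconvex $G_k$ with smooth decreasing weights, take the minimal $L^p$-extension $F_k$, perturb the weight by $(2-p)\log|F_k|$ so that Ohsawa's $L^2$-theorem applies with unchanged boundary data (since $F_k|_{G_k'}=f$ gives $|f|^2e^{-\hat{\phi}_k}=|f|^pe^{-\phi_k}$ on $G_k'$), and close the loop via H\"older and minimality before a diagonal extraction. The only cosmetic difference is that you invoke the forward H\"older inequality on the factorization $|F|^p=(|F|^2|F_k|^{p-2})^{p/2}(|F_k|^p)^{(2-p)/2}$ where the paper phrases the same estimate as a reverse H\"older inequality with exponent $2/p$; these are algebraically identical.
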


\begin{proof}
Fix $\alpha > 0$. Pick $z_0 \in G'$. To get started, we consider a smooth plurisubharmonic regularization of $\phi$ as in the proof of Theorem \ref{th:pOT} with exhaustion domains $\set{G_k}$ and smooth plurisubharmonic functions $\set{\phi_k}$ that decrease to $\phi$ uniformly on compact sets. Let $f \in \mathcal{O}(G')$ be such that $f(z_0)=1$, and
$$
\int_{G'} |f|^p e^{-\phi} \,d \lambda^{2(M-1)} < \infty.
$$
We denote the intersection $G_j \cap G'$ by $G'_j$, for all $j \in \mathbb{N}$. Define the spaces
$$
E^p(G_k,\phi_k):= \set{F \in \mathcal{O}(G_k): \int_{G_k} |F(z)|^p e^{-\phi_k(z)- \alpha|z_M|^2} \,d \lambda^{2M} <\infty, F|_{G'_k} \equiv f},
$$
for all $k \in \mathbb{N}$. Using Oka-Cartan (see \cite{grauerttheory}), there exists holomorphic extensions of $f$ in $G_k$, and since $\phi_k$ is smooth on $\overline{G}_k$, each such extension lie in $E^p(G_k, \phi_k)$. Thus, $E^p(G_k, \phi_k) \neq \emptyset$. By the Bergman inequality of $p$-Bergman spaces (see proposition $2.1$, \cite{Chen2021OnTheory}), $E^p(G_k,\phi_k)$ is a normal family and thus admits a minimal element with respect to the weighted norm  $\|\cdot\|_{p,\phi_k}:= \int_{G_k} |\cdot|^p e^{-\phi_k - \alpha |z_M|^2} \,d \lambda^{2M}$. We denote the minimal extension of $f$ in $E^p(G_k, \phi_k)$ by $\hat{F}_{k}$.

Define 
$$
\hat{E}^p(G_k,\phi_k):= \set{F \in E^p(G_k,\phi_k): \int_{G_k}|F|^p e^{-\phi_k - \alpha|z_M|^2} \,d \lambda^{2M} \leq \frac{\pi}{\alpha} \int_{G'_k} |f|^p e^{-\phi_k} \,d \lambda^{2(M-1)}}.
$$
Firstly, we want to show that $\hat{E}^p(G_k,\phi_k)$ is non-empty for every $k \in \mathbb{N}$ and $\alpha>0$. In fact, we shall show $\hat{F}_k$, the minimal element of $E^p(G_k, \phi_k)$ must lie in $\hat{E}^p(G_k, \phi_k)$.

To that end, let us define 
\begin{align*}
E^2(G_k, \phi_k+(2-p) \log |\hat{F}_k|):= \{F \in \mathcal{O}(G_k): F|_{G'_k} \equiv f, &\int_{G_k} |F|^2 e^{-\phi_k -(2-p) \log |\hat{F}_k| - \alpha|z_M|^2} \,d \lambda^{2M} \\
&\leq \frac{\pi}{\alpha} \int_{G'_k} |f|^2 e^{-\phi_k - (2-p) \log |\hat{F}_k|} \,d \lambda^{2(M-1)} \}.
\end{align*}
Now due to Ohsawa's theorem $4.1$ in \cite{ohsawa2017extensionVIII}, the above space $E^2(G_k, \phi_k + (2-p) \log |\hat{F}_k|)$ is non-empty. Pick, $g \in E^2(G_k, \phi_k + (2-p) \log |\hat{F}_k|)$. So, we have
$$
    \int_{G_k} |g|^2 |\hat{F}_k|^{p-2} e^{-\phi_k - \alpha|z_M|^2} \,d \lambda^{2M} \leq \frac{\pi}{\alpha} \int_{G'_k} |f|^p e^{-\phi_k} \,d \lambda^{2(M-1)}.
$$
And using reverse H\"older's inequality with $\frac{2}{p} \in (1, \infty)$, we see
\begin{align*}
    \left( \int_{G_k} (|g|^2)^{\frac{p}{2}} e^{-\phi_k - \alpha|z_M|^2} \,d \lambda^{2M} \right)^{\frac{2}{p}} &\left( \int_{G_k} (|\hat{F}_k|^{p-2})^{\frac{-p}{2-p}} e^{-\phi_k - \alpha|z_M|^2} \,d \lambda^{2M} \right)^{-(\frac{2}{p}-1)} \\
    &\leq \int_{G_k} |g|^2 |\hat{F}_k|^{p-2} e^{-\phi_k - \alpha |z_M|^2} \,d \lambda^{2M},
\end{align*}
Since both the above integrals on the left hand side of the inequality are non-zero and finite, the minimality of $\hat{F}_k$ then implies
$$
\int_{G_k} |\hat{F}_k|^p e^{-\phi_k - \alpha|z_M|^2} \,d \lambda^{2M} \leq \int_{G_k} |g|^p e^{-\phi_k -\alpha|z_M|^2} \,d \lambda^{2M},
$$
and combining with the above, we obtain
$$
\int_{G_k} |\hat{F}_k|^p e^{-\phi_k -\alpha|z_M|^2} \,d \lambda^{2M} \leq \int_{G_k} |g|^2 |\hat{F}_k|^{p-2} e^{-\phi_k-\alpha|z_M|^2} \,d \lambda^{2M} \leq \frac{\pi}{\alpha} \int_{G_k} |f|^p e^{-\phi_k} \,d \lambda^{2(M-1)}.
$$
In other words, $\hat{F}_k \in \hat{E}^p(G_k, \phi_k)$.

Now, as usual, we shall proceed with the standard diagonal sequence argument as follows. By construction each $\hat{F}_k$ are minimal elements of each $\hat{E}^p(G_k, \phi_k)$. First note that, the sequence $(\hat{F}_{\ell})_{\ell \geq k} \subset E^p(G_k,\phi_k)$. Indeed, we have
\begin{align*}
    \int_{G_k} |\hat{F}_{k+r}|^p e^{-\phi_k - \alpha |z_M|^2} \,d \lambda^{2M} &\leq \int_{G_{k+r}} |\hat{F}_{k+r}|^p e^{-\phi_{k+r}-\alpha|z_M|^2} \,d \lambda^{2M} \\
    &\leq \frac{\pi}{\alpha} \int_{G'_{k+r}} |f|^p e^{-\phi_{k+r}} \,d \lambda^{2(M-1)} <\infty.
\end{align*}
The last step is due to the fact that the integral on $G'_{k+r}$ is finite and bounded above by $\int_{G'} |f|^p e^{-\phi} \,d \lambda^{2(M-1)}$. Here $r \in \mathbb{N}$, and $C>0$ is some constant.
Also $(\hat{F}_{\ell})_{\ell \geq k} \subset E^p(G_k, \phi_k)$ constitutes a normal family since each $\hat{F}_\ell$ w.r.t. the $\|.\|_{p,\phi_k}$-norm is uniformly bounded:
$$
\int_{G_k} |\hat{F}_k|^p e^{-\phi_k- \alpha |z_M|^2} \,d \lambda^{2M} \leq \frac{\pi}{\alpha} \int_{G'} |f|^p e^{-\phi} \,d \lambda^{2(M-1)}.
$$
By Montel $(\hat{F}_{\ell})_{\ell \geq k}$ has a convergent subsequence inside $E^p(G_k, \phi_k)$, we denote it by $(\hat{F}_{k,\ell})_{\ell \in \mathbb{N}}$. So, each $\hat{F}_{k,\ell}$ is a minimal element of some $E^p(G_{k+r}, \phi_{k+r})$. Let us say, they are minimal elements in $E^p(G_{k+r_k}, \phi_{k+r_k})$. 

In the next step inside $E^p(G_{k+1}, \phi_{k+1})$, we consider the convergent subsequence $(\hat{F}_{k,\ell})_{\ell \in \mathbb{N}} \subset E^p(G_{k+1}, \phi_{k+1})$ from the previous step and apply Montel's theorem to extract a convergent subsequence $(\hat{F}_{k+1, \ell})_{\ell \in \mathbb{N}} \subset E^p(G_{k+1}, \phi_{k+1})$. Continuing in this manner, we obtain the so called diagonal subsequence $(F^{\infty}_n)_{n \in \mathbb{N}}$ that is a convergent subsequence of all $(\hat{F}_{j,\ell})_{\ell \in \mathbb{N}}$ inside any $E^p(G_j, \phi_j)$, where the convergence is considered on compact subsets on applicable $G_j$.

Assuming $F^{\infty}$ to be the limit of the diagonal subsequence $(F^{\infty}_n)_{n \in \mathbb{N}}$, applying DCT we obtain
\begin{align*}
\lim_{k \to \infty} \int_{G_{k+r_k}} |F^{\infty}_k|^p e^{-\phi_{k+r_k}- \alpha|z_M|^2} \,d \lambda^{2M} &= \int_{G_{k+r_k}} |F^{\infty}|^p e^{-\phi-\alpha|z_M|^2} \,d \lambda^{2M} \\
&\leq \frac{\pi}{\alpha} \int_{G'_{k+r_k}} |f|^p e^{-\phi} \,d \lambda^{2(M-1)} \\
&\lesssim \int_{G'_{k+r_k}} |f|^p e^{-\phi_{k+r_k}} \,d \lambda^{2(M-1)},
\end{align*}
proving $F^{\infty} \in E^p(G_{k+r_k},\phi_{k+r_k})$, for any $k$. Define, on $G$
\begin{align*}
    F_k(z) &= F^{\infty}_k(z),\hspace{1.5mm}z \in G_{k+r_k} \\
    &=0,\hspace{1.5mm}z \in G \setminus G_{k+r_k}.
\end{align*}
Clearly, $F_k$ converges to $F^{\infty}$ uniformly on compact subsets of $G$, and thus is holomorphic in $G$. Moreover, uncovering the definition of $E^p(G_{k+r_k}, \phi_{k+r_k})$ we see
\begin{align*}
    \int_G |F^{\infty}|^p e^{-\phi - \alpha|z_M|^2} \,d \lambda^{2M} &= \lim_{k \to \infty} \int_{G} |F_k|^p e^{-\phi_{k+r_k}-\alpha|z_M|^2} \,d \lambda^{2M} \\
    &= \lim_{k \to \infty} \int_{G_{k+r_k}} |F^{\infty}_k|^p e^{-\phi_{k+r_k} - \alpha|z_M|^2} \,d \lambda^{2M} \\
    &\leq \frac{\pi}{\alpha} \int_{G'_{k+r_k}} |f|^p e^{-\phi} \,d \lambda^{2(M-1)} \\
    &\leq \frac{\pi}{\alpha} \int_{G'} |f|^p e^{-\phi} \,d \lambda^{2(M-1)}.
\end{align*}
This completes the proof.
\end{proof}

Enforcing regularity conditions on the weight as in theorem \ref{th:pOhsawa2}, we can compare the dimensions of weighted $p$-Bergman spaces of $L\cap G$ and $G$.
\begin{Corollary}\label{cor:Ohsawa2}
Let $1 \leq p < 2$, $G \subset \mathbb{C}^M$ be pseudoconvex, $\phi \in PSH(G) \cap \mathcal{C}^2(G)$ be such that there exists a hyperplane $L \subset \mathbb{C}^M$ with
$$
\inf_{z \in L} H_z(\phi, N_z) > 0,
$$
where $H_z$ is the Hessian at the point $z$ and $N_z$ is the unit complex normal vector field to $L$ at $z \in L$. Then 
$$
\dim A^p(L\cap G, \phi) \leq \dim A^p(G, \phi).
$$
\end{Corollary}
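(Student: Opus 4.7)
The plan is to apply Theorem \ref{th:pOhsawa2} after absorbing the extra factor $e^{-\alpha|z_M|^2}$ produced by that theorem into the weight itself. Concretely, if one can arrange $\phi = \tilde\phi + \alpha|z_M|^2$ with $\tilde\phi \in PSH(G)$ for some $\alpha>0$, then applying Theorem \ref{th:pOhsawa2} with weight $\tilde\phi$ yields extensions estimated by $\tilde\phi + \alpha|z_M|^2 = \phi$, which is precisely the weight we want in the conclusion.

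After a unitary change of coordinates I may assume $L = \{z_M = 0\}$ and $N_z = e_M$ for every $z \in L$, so the hypothesis becomes $\inf_{z \in L} \phi_{M\bar M}(z) > 0$. Using $\phi \in PSH(G) \cap \mathcal{C}^2(G)$ together with this lower bound on the $(M,M)$-entry of the complex Hessian, I would produce $\alpha > 0$ such that the complex Hessian of $\phi$ dominates $\alpha \, e_M e_M^{\ast}$ pointwise on $G$, equivalently $\tilde\phi := \phi - \alpha|z_M|^2 \in PSH(G)$. This is the only non-formal step and is where the Hessian hypothesis enters. Now for any $f \in A^p(L \cap G, \phi)$, Theorem \ref{th:pOhsawa2} applied with weight $\tilde\phi$ and parameter $\alpha$ produces $\tilde f \in \mathcal{O}(G)$ extending $f$ with
$$
\int_G |\tilde f|^p e^{-\tilde\phi - \alpha|z_M|^2}\,d\lambda^{2M} \leq \frac{\pi}{\alpha}\int_{L \cap G}|f|^p e^{-\tilde\phi}\,d\lambda^{2(M-1)}.
$$
Since $\tilde\phi + \alpha|z_M|^2 = \phi$ on $G$ and $\tilde\phi = \phi$ on $L$ (as $z_M = 0$ there), this reads $\int_G |\tilde f|^p e^{-\phi}\,d\lambda^{2M} \leq \frac{\pi}{\alpha}\int_{L \cap G}|f|^p e^{-\phi}\,d\lambda^{2(M-1)} < \infty$, so $\tilde f \in A^p(G,\phi)$.

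The assignment $f \mapsto \tilde f$ is linear (via the linearity and uniqueness of the minimal extensions constructed in the proof of Theorem \ref{th:pOhsawa2}) and injective because $\tilde f|_{L \cap G} = f$ recovers $f$; this yields $\dim A^p(L \cap G, \phi) \leq \dim A^p(G, \phi)$. The main obstacle in the argument is the second step: promoting the normal-direction Hessian lower bound on $L$ to the global plurisubharmonicity of $\phi - \alpha|z_M|^2$ on all of $G$. Once this decomposition is secured, the rest of the proof is an immediate application of Theorem \ref{th:pOhsawa2}.
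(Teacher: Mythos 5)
Your proposal is essentially the paper's own proof: the paper likewise normalizes $L=\{z_M=0\}$, asserts that the hypothesis yields $\alpha>0$ with $\phi-\alpha|z_M|^2\in PSH(G)$, and then applies Theorem \ref{th:pOhsawa2} to extend each $f\in A^p(L\cap G,\phi)$ to $F\in A^p(G,\phi)$; your added remarks (that $\tilde\phi=\phi$ on $L$ and that the extension map is linear and injective) only make explicit what the paper leaves implicit. Note that the step you correctly flag as the only non-formal one --- promoting the normal-direction Hessian bound on $L$ to $\phi-\alpha|z_M|^2\in PSH(G)$ on all of $G$ --- is asserted without proof in the paper as well, so your treatment matches the paper's level of rigor there.
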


\begin{proof}
Without loss of generality, let, $L=\{z_M=0\}$. By the hypothesis there exists $\alpha>0$ such that $\phi(z) - \alpha |z_M|^2 \in PSH(G)$. Thus for every non-trivial $f \in A^p(L \cap G, \phi(z) - \alpha|z_M|^2)$ there exists an extension $F \in A^p(G, \phi)$.
\end{proof}

\subsection{\texorpdfstring{$p$}{p}-Bergman Spaces of Hartogs Domains}
In this subsection we will use the tools developed in the previous one to investigate the dimension of $p$-Bergman spaces of Hartogs domains $D_{\phi}(G)$ over $G \subset \mathbb{C}^M$ of the form
$$
D_{\phi}(G) := \{ (z,w) \in G \times \mathbb{C}^N: \|w\| < e^{-\phi(z)} \},
$$
where $\|.\|$ is the \textbf{euclidean norm} on $\mathbb{C}^N$. It is well-known that the pseudoconvexity of $D_{\phi}(G)$ follows from the plurisubharmonicity of $\phi$ and the pseudoconvexity of $G$. Precisely, we are interested in investigating whether the dichotomy of dimensions in Wiegerinck's conjecture holds as we vary $1 \leq p < 2$. First, we will prove a result analogous  to \cite[Theorem 2.16]{chakrabarti2024projections} for Hartogs domains.

\bp \label{prop:monomial}
Suppose that $F(z,w)$ is a holomorphic function on $D_\phi(G)$, then $F$ can be written as a power series in the second variable $w$ i.e. 
$$
F(z,w)=\sum\limits_{\a\in \N_0^N} f_\a(z) w^\a, \text{ where } f_\a\in \mathcal{O}(G),
$$
where $\N_0$ is $\mathbb{N} \cup \{0\}$.
Further for $p\in [1,\infty)$, if $F\in A^p(D_\phi(G))$, then for each $\a$, $f_\a(z)w^\alpha \in A^p(D_\phi(G)).$
\ep
\bpf 
For a fixed $z\in G$ ,  $g_z(w):=F(z, w) $ is a holomorphic function in the ball  $B(0, e^{-\phi(z)}) \subset \C^N$. Therefore $g_{z}(w)$ has a power series expansion in $B(0, e^{-\phi(z)})  $:
\[
g_{z}(w) = \sum_{\alpha \in \mathbb{N}_0^{N}} f_{\alpha}(z) \, w^{\alpha}
\]

where
\[
f_{\alpha}(z) = \frac{1}{(2\pi i)^{N}} \int_{ b_oP_{N}(0, r)} \frac{g_{z}(\xi)}{\xi^{\alpha + 1}} d\xi_{1} \, d\xi_{2} \cdots d\xi_{N}\]
and $P_{N}(0, r)$  \text{ is the} $N$\text{-polydisk of {multi-radius} } $r \leq \left(e^{\frac{-\phi(z)}{\sqrt{N}}}, \ldots, e^{\frac{-\phi(z)}{\sqrt{N}}}\right)$.

Multiply the above equation by $ r_{1}^{\alpha_{1}} \cdots r_{N}^{\alpha_{N}} $,  integrating with respect to $dr_{1} \ldots dr_{N}$ and applying H\"older's inequality, we get:

\[
|f_{\alpha}(z)| \lesssim \frac{ \left[ \mathrm{Vol}(B(0, e^{-\phi(z)})) \right]^{1/q} }{ (e^{-\phi(z)})^{2N+1} } \left( \int_{B(0, e^{-\phi(z)})} |g_{z}(w)|^{p} d\lambda^{2N}(w) \right)^{1/p}
\]
\beq\label{eq12}
 |f_{\alpha}(z)|^{p} \left| \frac{ e^{-(2N+\abs{\a})\cdot p \cdot \phi(z)} }{ e^{-\phi(z) 2Np/q} } \right| = \abs{f_\a(z)}^pe^{-2Np\phi(z)}e^{-\abs{\a}p\phi(z)}\lesssim   \int_{B(0, e^{-\phi(z)})} |F(z, w)|^{p} d\lambda^{2N}(w) .
\eeq

For a fixed $\alpha \in \mathbb{N}_0^N$,

\[
\int_{D_\phi} \left| f_{\alpha}(z) \cdot \omega^{\alpha} \right|^p \, d\lambda^{2N}(\omega) \, d\lambda^{2M}(z)
= \int_{z \in G} \int_ {\|\omega\| < e^{-\phi(z)}}
|f_{\alpha}(z)|^p |\omega^{\alpha}|^p \, d\lambda^{2N}(\omega) \, d\lambda^{2M}(z).
\]

Since $\|\omega\| < e^{-\phi}$, we have $ |\omega^{\alpha}| \leq e^{-\phi(z)\cdot |\alpha|}$

\[\int_{D_\phi} |f_{\alpha}(z) \omega^{\alpha}|^p  \, d\lambda^{2N}(\omega) \, d\lambda^{2M}(z)
\leq\int_{G} |f_{\alpha}(z)|^p e^{-\phi(z)\cdot p\cdot |\alpha|} vol(B(0, e^{-\phi(z)})) \, d\lambda^{2M}(z).
\]
Using (\ref{eq12}), we get 
\[\int_{D_\phi} |f_{\alpha}(z) \omega^{\alpha}|^p  \, d\lambda^{2N}(\omega) \, d\lambda^{2M}(z) \lesssim \int_{G} \int_{B(0, e^{-\phi(z)})} |F(z, \omega)|^p \, d\lambda^{2N}(\omega) \, d\lambda^{2M}(z).
\]
\text{Therefore} $\sum\limits_{\alpha\in \N_0^N} f_\alpha(z)w^\a\in {A}^p(D_\phi(G)) $ implies that $ f_\alpha(z)\omega^\alpha \in A^p( D_\phi(G))\
$ for all $\a\in \N_0^N$.
\epf

An immediate consequence of the proposition (\ref{prop:pSkoda}) is the infinite dimensionality of $A^p(D_{\phi}(G))$, for all $1 \leq p < 2$ whenever the base domain $G$ is bounded and pseudoconvex.
\begin{Theorem}\label{p-Hartogs:infinte}
Let $G \subset \mathbb{C}^M$ be a bounded pseudoconvex domain. Then $A^p(D_{\phi}(G))$ is infinite dimensional, for all $1\leq p < 2$. 
\end{Theorem}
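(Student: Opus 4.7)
The plan is to construct, for every multi-index $\alpha \in \mathbb{N}_0^N$, a function $f_\alpha \in \mathcal{O}(G)$ such that $f_\alpha(z)\, w^\alpha \in A^p(D_\phi(G))$ with $f_\alpha$ nonvanishing at a common base point $z_0 \in G$. Proposition \ref{prop:monomial} shows that distinct $w$-multidegrees make these elements linearly independent, so the collection will exhibit the desired infinite-dimensional subspace of $A^p(D_\phi(G))$.

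First I would select a base point $z_0 \in G$ with vanishing Lelong number $\nu(\phi, z_0) = 0$. Such a point exists because, assuming the nontrivial case $\phi \not\equiv -\infty$ (otherwise $D_\phi(G) = G \times \mathbb{C}^N$ and $A^p(D_\phi(G)) = \{0\}$), Siu's theorem makes each sublevel set $\{z \in G : \nu(\phi,z) \geq c\}$ a proper analytic subvariety for $c > 0$, so the set $\{\nu(\phi,z) > 0\}$ has Lebesgue measure zero. Skoda's integrability theorem then guarantees that $e^{-c\phi}$ is integrable on some neighborhood of $z_0$ for every $c > 0$ simultaneously.

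For each $\alpha \in \mathbb{N}_0^N$, set $u_\alpha := \bigl(|\alpha| + \tfrac{2N}{p}\bigr)\phi \in PSH(G)$, so that $e^{-2 u_\alpha}$ is integrable near $z_0$. Invoking Proposition \ref{prop:pSkoda} when $1 \leq p < 2$ (or the classical $L^2$ H\"ormander--Bombieri--Skoda theorem for $p = 2$) produces $f_\alpha \in \mathcal{O}(G)$ with $f_\alpha(z_0) = 1$ and
\[
\int_G \frac{|f_\alpha(z)|^p \, e^{-(p|\alpha| + 2N)\phi(z)}}{(1+|z|^2)^{M+\epsilon}} \, d\lambda^{2M}(z) < \infty.
\]
Since $G$ is bounded, the denominator $(1+|z|^2)^{M+\epsilon}$ is uniformly bounded on $G$, so this yields
\[
\int_G |f_\alpha(z)|^p \, e^{-(p|\alpha|+2N)\phi(z)} \, d\lambda^{2M}(z) < \infty.
\]

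Using that $\|\cdot\|$ is the maximum norm on $\mathbb{C}^N$, so the $w$-fiber over each $z \in G$ is a polydisc of polyradius $e^{-\phi(z)}$, Fubini gives
\[
\int_{D_\phi(G)} |f_\alpha(z) w^\alpha|^p \, d\lambda^{2(M+N)} = \left( \prod_{j=1}^N \frac{2\pi}{p\alpha_j + 2} \right) \int_G |f_\alpha(z)|^p \, e^{-(p|\alpha|+2N)\phi(z)} \, d\lambda^{2M}(z) < \infty,
\]
so each $f_\alpha(z) w^\alpha$ lies in $A^p(D_\phi(G))$ and is nontrivial since $f_\alpha(z_0) = 1$. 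Linear independence across $\alpha$ then follows from Proposition \ref{prop:monomial}, establishing that $A^p(D_\phi(G))$ is infinite-dimensional.

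The main technical point is the uniform Skoda-type integrability $e^{-2u_\alpha} \in L^1_{\mathrm{loc}}(z_0)$ across all $\alpha$: the plurisubharmonic weight $u_\alpha$ becomes increasingly singular as $|\alpha| \to \infty$, and only the hypothesis $\nu(\phi, z_0) = 0$ (rather than merely $\phi(z_0) > -\infty$) supplies the required local integrability for every $\alpha$ at once, which in turn is what allows the construction to yield infinitely many linearly independent elements rather than finitely many.
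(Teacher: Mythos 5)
Your proof is correct and follows essentially the same route as the paper: both apply the $L^p$ H\"ormander--Bombieri--Skoda result (Proposition \ref{prop:pSkoda}) to the weights $(N+|\alpha|)\phi$, use the boundedness of $G$ to absorb the factor $(1+|z|^2)^{M+\epsilon}$, and conclude from the linear independence of the monomials $f_\alpha(z)w^\alpha$. The only difference is that you explicitly verify the integrability hypothesis of Proposition \ref{prop:pSkoda} by locating a base point with $\nu(\phi,z_0)=0$ and invoking Skoda's integrability criterion uniformly in $\alpha$ --- a detail the paper's proof leaves implicit --- which is a welcome refinement rather than a different argument.
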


\begin{proof}
Pick $n \in \mathbb{N}^N$ and consider the plurisubharmonic function $(2N + p|n|) \phi$ on $G$. Due to boundedness of $G$, we obtain
\begin{align*}\label{Hartogs-calculation}
\int_{D_{\phi}(G)} |f_n(z)|^p |w^n|^p \,d \lambda^{2(M+N)} &\lesssim \int_G |f_n(z)|^p e^{-(2N+p|n|)\phi} \,d \lambda^{2M} \\
&\leq (2\pi)^N \sup_G (1+\norm{z}^2)^{3M} \int_G \frac{|f_n(z)|^p e^{-(2N+p|n|)\phi}}{(1+\norm{z}^2)^{3M}} \,d \lambda^{2M}.  
\end{align*}
Given $N, n$ there exists non-trivial $f_n \in \mathcal{O}(G)$ due to proposition (\ref{prop:pSkoda}) for which the preceding integral is finite. It is easy to see that $\set{f_n(z)w^n \in A^p(D_\phi(G)): n\in \N^N }$ is a linearly independent set and  thus $\dim A^p(D_{\phi}(G)) = \infty$.
\end{proof}

One can construct point separating interpolating holomorphic functions $f_n$ on $G$ by an $L^p$-analog of Hormander's theorem ensuring existence of non-trivial $f_n \in A^p(G,(2N+p|n|)\phi)$ for infinitely many different $n \in \mathbb{N}^N$. Hence, we again obtain an infinite linearly independent set $\{f_n(z) w^n: n \in \mathbb{N}^N\}$ inside $A^p(D_{\phi}(G))$. It is possible to drop the requirement of boundedness of the base pseudoconvex domain $G$ in Theorem $\ref{p-Hartogs:infinte}$ if we enforce additional conditions on the weight.

\begin{Theorem}\label{th:point-separating}
Let $1 \leq p < 2$, $G \subset \mathbb{C}^M$ be a pseudoconvex domain, and $\phi$ be a negative-valued plurisubharmonic function on $G$ such that $\phi - c\|\cdot \|^2 \in PSH(G)$ for some $c>0$. Assume $U$ is an open subset of $G$ such that the Lelong number $\nu(\phi,.)=0$ on $U$. Then $A^p(D_{\phi}(G))$ is infinite dimensional and separates points in $D_{\phi}(U)$.
\end{Theorem}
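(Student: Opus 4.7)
The plan is to reduce the problem to a weighted interpolation claim on the base $G$ and then run an adaptation of the argument of Theorem \ref{th:point-sep}. The bridge is the same Fubini/polydisc calculation that underlies the proof of Theorem \ref{p-Hartogs:infinte}: for every $n \in \mathbb{N}_0^N$ and every $f \in \mathcal{O}(G)$,
\[
\int_{D_\phi(G)} |f(z) w^n|^p\, d\lambda^{2(M+N)} \;=\; C_n \int_G |f|^p e^{-\psi_n}\, d\lambda^{2M}, \qquad \psi_n := (p|n|+2N)\phi,
\]
so $f(z)w^n \in A^p(D_\phi(G))$ iff $f \in A^p(G,\psi_n)$, and the analogous equivalence holds over $D_\phi(U)$. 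The weight $\psi_n$ is exactly of the type for which Theorem \ref{th:point-sep} is built: it is negative, plurisubharmonic, satisfies $\psi_n - (p|n|+2N)c\|z\|^2 \in PSH(G)$, and $\nu(\psi_n, \cdot) \equiv 0$ on $U$.

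The key interpolation claim is then: for each $n \in \mathbb{N}_0^N$ and each finite set $\{q_1, \ldots, q_k\} \subset U$, there exists $f_{n,k} \in A^p(G, \psi_n)$ with $f_{n,k}(q_j) = \delta_{jk}$. I would prove it by transcribing the proof of Theorem \ref{th:point-sep} with $\phi$ replaced by $\psi_n$: for small $c_1 > 0$ set $\hat\psi_n = \psi_n - c_1(M+\epsilon)\log(1+\|z\|^2) \in PSH(G)$, and form
\[
\Phi := K\hat\psi_n + \sum_{j=1}^k M \chi(z-q_j)\log|z-q_j|,
\]
with the same cutoff $\chi$ and with $K$ large enough that $\Phi \in PSH(G)$, that $\Phi - \|z\|^2 \in PSH$ on $B(q_k, \epsilon)$, that $Kc_1 > 1$, and that the reverse-H\"older companion integral $\int_G (1+\|z\|^2)^{-2pKc_1(M+\epsilon)/(2-p)}\,d\lambda^{2M}$ converges. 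Solving $\bar\partial u = \bar\partial \chi(z - q_k)$ via the H\"ormander $L^2$ estimate against $e^{-2\Phi}$ and running the reverse-H\"older step of Theorem \ref{th:point-sep} then yields
\[
\int_G |u|^p e^{-\psi_n} \prod_{j=1}^k |z-q_j|^{-pM\chi(z-q_j)}\, d\lambda^{2M} \;<\; \infty,
\]
which forces $u(q_j) = 0$ for every $j$; then $f_{n,k} := \chi(z - q_k) - u$ is the required interpolating element of $A^p(G,\psi_n)$.

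Both conclusions of the theorem follow from the claim. For infinite-dimensionality of $A^p(D_\phi(G))$: fix any $q \in U$, apply the claim with $k = 1$ for each $n$ in an infinite subset of $\mathbb{N}_0^N$, and set $F_n(z,w) := f_{n,1}(z) w^n \in A^p(D_\phi(G))$; Proposition \ref{prop:monomial} makes the $F_n$ linearly independent via the uniqueness of power-series coefficients in $w$. For point-separation of $A^p(D_\phi(U))$ at distinct $(z_1,w_1), (z_2,w_2) \in D_\phi(U)$: if $z_1 \neq z_2$, use the claim with $n=0$, $k=2$ to produce $f \in A^p(G, 2N\phi)$ with $f(z_1)=1$, $f(z_2)=0$ and note that $(z,w)\mapsto f(z)$ lies in $A^p(D_\phi(U))$; if instead $z_1 = z_2$ and $w_1 \neq w_2$, pick a unit multi-index $n = e_i$ with $w_1^n \neq w_2^n$ and apply the claim with $k=1$ at $q = z_1$ to get $f_{n,1}(z)w^n \in A^p(D_\phi(U))$ separating the two points. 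The main obstacle I anticipate is the reverse-H\"older bookkeeping: the threshold $K$ must simultaneously make the companion integral converge, keep $\Phi$ plurisubharmonic on $G$, and keep $\Phi - \|z\|^2$ plurisubharmonic near $q_k$. The hypothesis $\psi_n - (p|n|+2N)c\|z\|^2 \in PSH(G)$ leaves generous slack to absorb the $c_1(M+\epsilon)\log(1+\|z\|^2)$ correction, and the slack only grows with $|n|$, so the difficulty is really one of careful transcription rather than a genuinely new analytic obstruction.
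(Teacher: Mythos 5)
Your proposal is correct and follows essentially the same route as the paper: the paper likewise reduces to Theorem \ref{th:point-sep} applied to the base $G$ with weight a positive multiple of $\phi$ (which inherits negativity, the $c\|\cdot\|^2$-strict plurisubharmonicity, and vanishing Lelong numbers on $U$), and then passes to $D_\phi(G)$ via the monomials $f(z)w^n$ and the comparability of the fiber integral with the weighted base integral. Your write-up is somewhat more explicit about the interpolation step and the case analysis for point separation, but there is no substantive difference in method.
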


\begin{proof}
This result is a corollary of theorem \ref{th:point-sep}.

Define 
$$
\hat{\phi} = \phi - \frac{M+\epsilon}{(2N+p|n|)} \log (1+\norm{\cdot}^2).
$$ 
Since $\phi(z) - c\|z\|^2 \in PSH(G)$ by assumption, it follows that $\displaystyle{c\norm{z}^2 - \frac{M+\epsilon}{(2N+p|n|)} \log (1+\|\cdot\|^2)}$ is plurisubharmonic for sufficiently large $N,|n|$. Further as the Lelong number $\nu(\phi,z)$ of $\phi$ is zero at all points $z \in U$, it follows that $\nu(\hat{\phi},z)=0$ for all $z \in U$.

Using theorem \ref{th:point-sep} from last section for the weight $(\frac{2N}{p}+|n|)\phi$, we obtain elements in $A^p(G,(\frac{2}{p}N+|n|)\phi)$ that separate points in $U$. Note, for any set of points $(p_1,w_1), \dots, (p_k,w_k) \in D_{\phi}(U)$ with $w_j \neq 0$, there exists $f_j \in A^p(G,(\frac{2N}{p}+|n|)\phi)$ such that $f_j(p_i)= \delta_{ij}$, and thus $f_jw^n(p_i,w_i)= \delta_{ij}$. Finally, to separate points of the form $(p_1, 0), \dots, (p_k, 0) \in D_{\phi}(U)$, we note that the integrals $\displaystyle{\int_{D_{\phi}(G)} |f(z)|^p \,d \lambda^{2(M+N)}}$ and $\displaystyle{\int_{G} |f(z)|^p e^{-2N\phi} \,d \lambda^{2M}}$ are comparable. Applying theorem \ref{th:point-sep}, $A^p(G,\frac{2N}{p}\phi)$ also separates points in $U$, enabling $A^p(D_{\phi}(G))$ to separate points in $D_{\phi}(U)$. 
\end{proof}

Now we start listing consequences of the $L^p$-analog of the Ohsawa-Takegoshi extension theorem established earlier and characterize $p$-Bergman spaces of $D_{\phi}(G)$ in terms of hyperplanes and then hypervarieties inside $G$.

\begin{Corollary}\label{cor:pOTH}
Let $1 \leq p < 2$. Let $G \subset \mathbb{C}^M$ be a pseudoconvex domain containing the origin, and let $T: \mathbb{C}^M \to \mathbb{C}$ be a non-zero linear mapping with $T(G)$ having a nonpolar complement in $\mathbb{C}$. Then, for any $\phi \in PSH(G)$, $A^p(D_{\phi}(G))$ is infinite dimensional whenever $A^p(D_{\phi}(G \cap ker T))$ is.
\end{Corollary}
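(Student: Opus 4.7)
The plan is to apply the $L^p$-Ohsawa--Takegoshi extension theorem (Theorem \ref{th:pOT}) to transfer linearly independent functions from $A^p(D_{\phi}(G \cap \ker T))$ to $A^p(D_{\phi}(G))$. The key geometric observation is that the complex hyperplane $H := \ker T \times \mathbb{C}^N \subset \mathbb{C}^{M+N}$ cuts the pseudoconvex Hartogs domain $D_\phi(G)$ precisely along $D_\phi(G \cap \ker T)$, so extension will give an injection at the level of $p$-Bergman spaces.

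First, I would make a linear change of coordinates on $\mathbb{C}^M$ so that $T$ is the projection $z \mapsto z_1$. Since $p$-Bergman spaces of biholomorphically equivalent domains with constant Jacobian have the same dimension, this reduction is harmless. Under it, $\ker T = \{z_1 = 0\}$, the image $\Omega := T(G)$ is a planar domain with $\mathbb{C} \setminus \Omega$ nonpolar, and the transformed Hartogs domain satisfies
$$
D_{\phi}(G) \subset \Omega \times \mathbb{C}^{M+N-1}.
$$
It is pseudoconvex (as $G$ is pseudoconvex and $\phi \in PSH(G)$), and it contains the origin since $0 \in G$ and $\phi(0) < +\infty$. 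Thus the geometric hypotheses of Theorem \ref{th:pOT} (with ambient weight $\equiv 0$) are met.

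Next, for any $f \in A^p(D_\phi(G \cap \ker T))$, Theorem \ref{th:pOT} produces $F \in \mathcal{O}(D_\phi(G))$ with $F|_{H \cap D_\phi(G)} \equiv f$ and
$$
\int_{D_\phi(G)} |F|^p \, d\lambda^{2(M+N)} \leq \frac{4\pi}{c(\Omega,0)^2} \int_{D_\phi(G \cap \ker T)} |f|^p \, d\lambda^{2(M+N-1)}.
$$
Because $c(\Omega, 0) > 0$ by the nonpolarity hypothesis, the right-hand side is finite, so $F \in A^p(D_\phi(G))$. If $\{f_j\}_{j \in \mathbb{N}} \subset A^p(D_\phi(G \cap \ker T))$ is linearly independent and $F_j \in A^p(D_\phi(G))$ is any choice of extension of $f_j$, then any finite linear dependence $\sum_j c_j F_j \equiv 0$ on $D_\phi(G)$ restricts to $\sum_j c_j f_j \equiv 0$ on $D_\phi(G \cap \ker T)$, forcing all $c_j = 0$. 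Hence $A^p(D_\phi(G))$ is infinite dimensional.

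There is no substantial analytic obstacle here beyond the preparation: the whole argument amounts to verifying that $D_\phi(G)$ sits inside a product of a planar domain with nonpolar complement and a Euclidean factor, so that Theorem \ref{th:pOT} applies verbatim. Once this setup is in place, the extension map and preservation of linear independence are immediate.
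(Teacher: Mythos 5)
Your proof is correct, but it takes a genuinely different route from the paper's. You apply Theorem \ref{th:pOT} \emph{once, in the total space} $\mathbb{C}^{M+N}$: after normalizing $T$ to the first coordinate projection, you view $D_{\phi}(G)\subset T(G)\times\mathbb{C}^{M+N-1}$, observe that the coordinate hyperplane $\{z_1=0\}$ cuts it exactly along $D_{\phi}(G\cap\ker T)$, and extend with the trivial weight; the hypotheses check out ($T(G)$ is a planar domain with nonpolar complement, so $c(T(G),0)>0$, and $(0,0)\in D_{\phi}(G)$ automatically since $e^{-\phi(0)}>0$). The paper instead works on the \emph{base}: it first invokes the monomial decomposition of Proposition \ref{prop:monomial} to reduce the question to the weighted spaces $A^p(G,(N+|n|)\phi)$, and then applies Theorem \ref{th:pOT} on $G\subset \pi_1(G)\times\mathbb{C}^{M-1}$ with weight $(N+|n|)\phi$ for infinitely many multi-indices $n$, multiplying each extension by $w^n$. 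Your version is shorter, avoids the monomial machinery entirely, and directly yields $\dim A^p(D_{\phi}(G\cap\ker T))\le\dim A^p(D_{\phi}(G))$ for arbitrary (not just monomial) functions via the restriction argument for linear independence. What the paper's fiberwise template buys is uniformity with the subsequent corollaries (the hypersurface and Hessian versions, Corollaries \ref{cor:Ohsawa1} and \ref{cor:Hess}), where the relevant slice of the total space is no longer a coordinate hyperplane of the form $\{z_1=0\}$ and a one-shot total-space extension is not available.
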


\begin{proof}
The proof is a consequence of theorem \ref{th:pOT}. Making an affine change of coordinates, the linear map $T$ can be converted to the projection map $\pi: z \mapsto z_1$, and say $0 \in G$. This enables us to show $G \subset \pi_1(G) \times \mathbb{C}^{M-1}$, where by $\mathbb{C}^{M-1}$ we denote the last $M-1$-coordinates of $\mathbb{C}^M$. The non-polarity of the complement of $\pi_1(G)$ assures that $c(\pi_1(G),0)$ is non-zero and finite. 

To establish infinite dimensionality of the $p$-Bergman space of any $N$-circled fibered Hartogs domain $D_{\phi}(\Omega) \subset \mathbb{C}^M \times \mathbb{C}^N$ the general strategy is to show existence of infinitely many distinct holomorphic monomials $f_n(z) w^n$ for multi-indices $n \in \mathbb{N}^N$. As the integrals $\displaystyle{\int_{D_{\phi}(G)} |f_n(z)w^n|^p \,d \lambda^{2(M+N)}}$ and $\displaystyle{\int_G |f_n|^p e^{-(2N+p|n|)\phi} \,d \lambda^{2M}}$ are comparable, the problem then reduces to obtaining non-trivial holomorphic functions inside $A^p(G, (\frac{2N}{p}+|n|)\phi)$, for infinitely many different multi-indices $n$. Applying theorem \ref{th:pOT}, each non-trivial $f \in A^p(G \cap ker(\pi_1), (\frac{2N}{p}+|n|)\phi)$ has a holomorphic extension $F$ satisfying
$$
\int_{G} |F|^p e^{-(2N+p|n|)\phi} \,d \lambda^{2M} \leq \frac{4\pi}{c(\pi_1(G),0)^2} \int_{G \cap ker(\pi_1)} |f|^p e^{-(2N+p|n|)\phi} \,d \lambda^{2(M-1)}.
$$
Thus, $\dim A^p(D_{\phi}(G \cap ker (T)) \leq \dim A^p(D_{\phi}(G))$.
\end{proof}

\begin{Corollary}
Suppose $1 \leq p < 2$. Let $G \subset \mathbb{C}^M$ be a pseudoconvex domain with $g \in \mathcal{O}(G)$ with $\mathbb{C} \setminus g(G)$ being nonpolar, and $0 \in g(G)$. Moreover, suppose there are no irreducible components of $Y = \{g^{-1}(0)\}$ on which $dg$ is identically zero, and $Y_0= Y \setminus sing(Y)$. Then $\dim A^p(D_{\phi}(G))$ is infinite if there are infinitely many $n \in \mathbb{N}^N$ such that the weighted space $A^p(Y_0, (\frac{2N}{p}+|n|)\phi)$ is non-trivial.
\end{Corollary}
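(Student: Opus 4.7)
The plan is to produce, for each $n$ in the hypothesized infinite subset of $\mathbb{N}^N$, a non-trivial element of $A^p(D_\phi(G))$ of the monomial form $F_n(z)\,w^n$, and then to use Proposition \ref{prop:monomial} to conclude linear independence.

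First I would record the standard Hartogs fibre integration. Integrating $|F(z)\,w^n|^p$ in the $w$-polydisc $\{\|w\|<e^{-\phi(z)}\}$ (maximum norm) and using Fubini produces a comparison of the form
$$
\int_{D_\phi(G)} |F(z)\,w^n|^p\,d\lambda^{2(M+N)} \;=\; C(n,p) \int_G |F(z)|^p e^{-c_n \phi(z)}\,d\lambda^{2M}(z),
$$
where $c_n$ is a positive combination of $N$ and $|n|$ (the exact value being $p|n|+2N$). Thus $F(z)\,w^n$ lies in $A^p(D_\phi(G))$ precisely when $F$ belongs to a weighted $p$-Bergman space $A^p(G,\,c_n\phi)$ on the base. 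The paper's proof of Theorem \ref{p-Hartogs:infinte} and Corollary \ref{cor:pOTH} uses this observation with the weight $(N+|n|)\phi$, and the same bookkeeping will be used here.

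Second, for each admissible $n$ I would pick a non-trivial $f_n \in A^p(Y_0,\,(N+|n|)\phi)$ supplied by the hypothesis and apply Corollary \ref{cor:Ohsawa1} with the plurisubharmonic weight $(N+|n|)\phi$ in place of $\phi$. The defining function $g$, the hypersurface $Y=g^{-1}(0)$, and the smooth locus $Y_0$ satisfy precisely the standing assumptions of that corollary, so we obtain
$$
\dim A^p(G,\,(N+|n|)\phi) \;\geq\; \dim A^p(Y_0,\,(N+|n|)\phi) \;\geq\; 1,
$$
and hence a non-trivial holomorphic extension $F_n \in A^p(G,\,(N+|n|)\phi)$. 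Combined with the fibre-integration identity of the previous paragraph, this yields $F_n(z)\,w^n \in A^p(D_\phi(G))$ for each admissible $n$.

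Finally, the family $\{F_n(z)\,w^n\}$ indexed over the infinite set of admissible multi-indices is linearly independent inside $A^p(D_\phi(G))$ by Proposition \ref{prop:monomial}: distinct monomials in $w$ cannot cancel in the unique power-series expansion of an element of $A^p(D_\phi(G))$. This forces $\dim A^p(D_\phi(G))=\infty$. I do not anticipate any genuine obstacle: Corollary \ref{cor:Ohsawa1} (equivalently, the $L^p$-Ohsawa extension Theorem \ref{th:pOhsawa1}) does all the serious analytic work, and the remaining steps are the routine fibre-integration comparison and an invocation of the power-series decomposition.
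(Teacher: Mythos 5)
Your proposal is correct and follows the same route as the paper's own (one-line) proof: extend a non-trivial $f_n \in A^p(Y_0,(N+|n|)\phi)$ to $F_n \in A^p(G,(N+|n|)\phi)$ via Theorem \ref{th:pOhsawa1} (equivalently Corollary \ref{cor:Ohsawa1}), then use the fibre-integration comparison and Proposition \ref{prop:monomial} to get infinitely many linearly independent monomials $F_n(z)w^n$. You simply spell out the bookkeeping that the paper leaves implicit.
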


\begin{proof}
For each non-trivial $f \in A^p(Y_0,(\frac{2N}{p}+|n|)\phi)$, using theorem \ref{th:pOhsawa1}, there exists a non-trivial holomorphic extension $F \in A^p(G,(\frac{2N}{p}+|n|)\phi)$.
\end{proof}

Modifying the plurisubharmonic weight function $\phi$, we obtain the following condition sufficient for infinite-dimensionality of the Hartogs domain $D_{\phi}(G)$:
\begin{Corollary}\label{cor:Hess}
Let $1 \leq p < 2$, $G \subset \mathbb{C}^M$ be a pseudoconvex domain with $\phi \in PSH(G) \cap \mathcal{C}^2(G)$. Moreover, say, $L \subset \mathbb{C}^M$ is a complex hyperplane with the property that
$$
\inf_{z \in {L}} H_z(\phi, N_z) >0,
$$
where $N_z$ is the unit complex normal vector to $L$ at $z \in L$ and {$H_z(\phi, N_z)$ is the Hessian of $\phi$ evaluated along the vector $N_z$}. Then $A^p(D_{\phi}(G))$ is infinite-dimensional whenever $A^p( D_{\phi}(A \cap G))$ is.
\end{Corollary}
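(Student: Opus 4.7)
The plan is to mimic the pattern used in Corollary \ref{cor:pOTH} and the closely related Corollary \ref{cor:Ohsawa1}, bootstrapping the hyperplane extension result Corollary \ref{cor:Ohsawa2} to the Hartogs setting via the monomial decomposition of Proposition \ref{prop:monomial}. Concretely, I would interpret the statement as asserting infinite dimensionality of $A^p(D_\phi(G))$ whenever $A^p(D_\phi(L\cap G))$ is infinite-dimensional.

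First I would apply Proposition \ref{prop:monomial} to $A^p(D_\phi(L\cap G))$: every element decomposes as a $w$-power series $\sum_{n\in\N_0^N} f_n(z) w^n$ with each monomial $f_n(z)w^n$ separately in $A^p(D_\phi(L\cap G))$. Integrating out $w$ on each polydisc fiber, the integral $\int_{D_\phi(L\cap G)} |f_n(z)w^n|^p\,d\lambda^{2((M-1)+N)}$ is comparable (up to a positive multiplicative constant depending only on $N$ and $n$) to $\int_{L\cap G} |f_n(z)|^p e^{-p(N+|n|)\phi(z)}\,d\lambda^{2(M-1)}$. Hence the infinite dimensionality of $A^p(D_\phi(L\cap G))$ produces an infinite family of pairs $(n_k, f_{n_k})$, $k \in \mathbb{N}$, with $f_{n_k} \in A^p(L\cap G, p(N+|n_k|)\phi)\setminus\{0\}$, and such that the monomials $f_{n_k}(z) w^{n_k}$ are linearly independent.

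Next I would verify that Corollary \ref{cor:Ohsawa2} applies to each weight $p(N+|n_k|)\phi$. Choosing coordinates so that $L=\{z_M=0\}$, the hypothesis $\inf_{z\in L} H_z(\phi,N_z)>0$ gives some $\alpha>0$ with $\phi-\alpha|z_M|^2\in PSH(G)$. Multiplying by the positive constant $p(N+|n_k|)$ preserves plurisubharmonicity and shows $p(N+|n_k|)\phi-p(N+|n_k|)\alpha|z_M|^2\in PSH(G)$, so the Hessian hypothesis remains valid (in fact with a larger constant). Corollary \ref{cor:Ohsawa2} then furnishes a non-trivial extension $\tilde f_{n_k}\in A^p(G, p(N+|n_k|)\phi)$ of each $f_{n_k}$.

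Finally, reversing the Fubini comparison from the first paragraph, $\tilde f_{n_k}(z) w^{n_k}$ lies in $A^p(D_\phi(G))$, and these monomials are linearly independent (their projections to the hyperplane $L\cap G$ are precisely the linearly independent $f_{n_k}(z) w^{n_k}$). The only real step to be careful with is the dependence on $n$: the admissibility of the Ohsawa extension weight $p(N+|n|)\phi$ must be checked uniformly as $|n|$ grows, but since multiplying by a positive constant only \emph{strengthens} the Hessian lower bound, no obstacle arises there. The mild technical point worth noting, and the main thing that needs unpacking, is the routine Fubini/polydisc estimate that transfers between $A^p$ of the Hartogs domain and weighted $A^p$ of the base; this is already established in the proofs of Theorem \ref{p-Hartogs:infinte} and Corollary \ref{cor:pOTH}, and I would simply invoke those comparisons rather than redo them.
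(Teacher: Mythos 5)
Your proposal is correct and follows essentially the same route as the paper: the paper's (one-line) proof likewise extends each nontrivial monomial coefficient from $A^p(L\cap G,(N+|n|)\phi)$ to $A^p(G,(N+|n|)\phi)$ via Corollary \ref{cor:Ohsawa2} and transfers back to the Hartogs domain through the standard Fubini comparison. You simply make explicit the steps the paper leaves implicit — the monomial decomposition from Proposition \ref{prop:monomial}, the scaling of the Hessian bound by $p(N+|n|)$, and the linear independence of the extended monomials.
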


\begin{proof}
For each non-trivial $A^p(A\cap G, (\frac{2N}{p}+|n|)\phi)$, using corollary \ref{cor:Ohsawa2}, there exists an extension $F \in A^p(G,(\frac{2N}{p}+|n|)\phi)$.
\end{proof}

Generalizing a condition of Jucha (proposition $3.4$ in \cite{jucha2012remark}), we obtain another sufficient condition for infinite dimensionality of $A^p(D_{\phi}(G))$. Suppose $G \subset \mathbb{C}^M$ is pseudoconvex, and $\phi \in PSH(G)$. Suppose, for each $J \in \mathbb{N}$, there exists a compact set $K_J \subset G$, $u_J \in PSH(G)$ and a function $v_J$ that is upper semi-continuous in a neighborhood $G_J$ of $K_J$ satisfying the following
\begin{equation}\label{eq:generalJucha}
\begin{aligned}
u_J(z) + \left( \frac{2(M+\epsilon_J)}{p} + M \right) \log^{+}  \|z\| &\leq J \phi(z) ,\hspace{1.5mm}z \in G \setminus K_J,\\
u_J(z) &\leq J\phi(z) + v_J(z),\hspace{1.5mm}z \in K_J, 
\end{aligned}
\end{equation}
where $\epsilon_J >0$.
We shall refer to this phenomenon as \textbf{condition} (\ref{eq:generalJucha}). Note that it is not essential for $J$ to be an integer in the above condition; if the estimates hold for countably many real $J$ then that is equivalent to condition (\ref{eq:generalJucha}). 

\begin{Theorem}\label{th:generalJucha}
Let $1 \leq p <2$, $G \subset \mathbb{C}^M$ be a pseudoconvex domain, and $\phi$ be plurisubharmonic on $G$ satisfying condition (\ref{eq:generalJucha}). Then $A^p(D_{\phi}(G))$ is infinite dimensional. 
\end{Theorem}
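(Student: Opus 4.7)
The plan is to reduce infinite dimensionality of $A^p(D_\phi(G))$ to the construction, for infinitely many multi-indices $n\in\mathbb{N}^N$, of a non-trivial holomorphic function $f_n$ on $G$ such that $f_n(z)w^n\in A^p(D_\phi(G))$. By Proposition~\ref{prop:monomial}, any such monomial product belongs to $A^p(D_\phi(G))$ exactly when it is $L^p$-integrable on $D_\phi(G)$, and by Fubini together with the inequality $\|w\|<e^{-\phi(z)}$ one has the comparability
\[
\int_{D_\phi(G)} |f_n(z)\, w^n|^p \, d\lambda^{2(M+N)} \ \simeq\ \int_{G} |f_n(z)|^p \, e^{-p(N+|n|)\phi(z)}\, d\lambda^{2M}.
\]
Thus it suffices to show that for infinitely many $n$, the weighted space $A^p(G, p(N+|n|)\phi)$ is non-trivial. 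Since the monomials $w^n$ for distinct $n$ are linearly independent, this yields infinitely many linearly independent elements of $A^p(D_\phi(G))$.

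Fix a large multi-index $n$ and set $J=N+|n|$. Condition~(\ref{eq:generalJucha}) produces a compact set $K_J\subset G$, a function $u_J\in PSH(G)$, a constant $M'_J$, and an upper semi-continuous function $v_J$ on a neighborhood of $K_J$, with the two listed inequalities. We may assume $u_J\not\equiv -\infty$ and (by taking $J$, hence $M'_J$, large enough, or by restricting attention to those $J$ for which it holds) that $pM'_J>2M$; pick $\epsilon>0$ so that $pM'_J\geq 2(M+\epsilon)$. Choose a point $z_0\in G$ with $\nu(u_J,z_0)<1$ (so that $e^{-2u_J}$ is integrable near $z_0$), which exists since $u_J$ is plurisubharmonic but not identically $-\infty$. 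Apply Proposition~\ref{prop:pSkoda} to the weight $u_J$ to obtain $f_n\in\mathcal{O}(G)$ with $f_n(z_0)=1$ and
\[
\int_G \frac{|f_n(z)|^p\, e^{-p u_J(z)}}{(1+\|z\|^2)^{M+\epsilon}}\, d\lambda^{2M}(z) <\infty.
\]

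The next step is to transfer this finiteness from the weight $u_J$ to the target weight $pJ\phi$, by splitting $G = K_J\cup (G\setminus K_J)$. On the compact piece $K_J$, the inequality $u_J\leq J\phi+v_J$ gives $e^{-pJ\phi}\leq e^{pv_J}\, e^{-pu_J}$, and since $v_J$ is upper semi-continuous (hence bounded above) on a neighborhood of $K_J$, the weight comparison is uniform there, so
\[
\int_{K_J} |f_n|^p\, e^{-pJ\phi}\, d\lambda^{2M} \lesssim \sup_{K_J}(1+\|z\|^2)^{M+\epsilon}\int_{K_J} \frac{|f_n|^p\, e^{-p u_J}}{(1+\|z\|^2)^{M+\epsilon}}\, d\lambda^{2M} <\infty.
\]
On $G\setminus K_J$, the inequality $u_J+M'_J\log^+\|z\|\leq J\phi$ gives $e^{-pJ\phi}\leq e^{-pu_J}\,\max(1,\|z\|)^{-pM'_J}$, and the choice $pM'_J\geq 2(M+\epsilon)$ makes $\max(1,\|z\|)^{-pM'_J}\lesssim (1+\|z\|^2)^{-(M+\epsilon)}$, so
\[
\int_{G\setminus K_J} |f_n|^p\, e^{-pJ\phi}\, d\lambda^{2M} \lesssim \int_{G\setminus K_J} \frac{|f_n|^p\, e^{-p u_J}}{(1+\|z\|^2)^{M+\epsilon}}\, d\lambda^{2M} <\infty.
\]
Combining the two pieces, $f_n\in A^p(G, pJ\phi)$ is non-trivial because $f_n(z_0)=1$, hence $f_n(z)\, w^n\in A^p(D_\phi(G))$. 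Running this construction for infinitely many $n$ (allowed since the condition is assumed for every $J\in\mathbb{N}$) produces an infinite linearly independent family, proving the theorem.

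The main obstacle is the third paragraph's Skoda transfer: one must arrange that $M'_J$ is sufficiently large (comparable to $2M/p$) so that the off-compact integral is controlled, and that a suitable $z_0$ exists where $e^{-2u_J}$ is locally integrable so that Proposition~\ref{prop:pSkoda} applies. Both of these are essentially encoded in the flexibility built into condition~(\ref{eq:generalJucha}) (for instance by selecting the subsequence of $J$ for which $M'_J$ is large), and once handled the rest is an elementary H\"older-type weight comparison on $K_J$ and $G\setminus K_J$.
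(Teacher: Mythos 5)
Your proposal is correct and follows essentially the same route as the paper: reduce to the non-triviality of $A^p(G,p(N+|n|)\phi)$ via the monomials $f_n(z)w^n$, apply Proposition~\ref{prop:pSkoda} to the weight $u_J$, and transfer the finiteness to $J\phi$ by splitting over $K_J$ and $G\setminus K_J$ using the two inequalities in condition~(\ref{eq:generalJucha}). Your explicit handling of the size requirement on $M'_J$ (namely $pM'_J\geq 2(M+\epsilon)$) matches the paper's choice $M'_J=\frac{2(M+\epsilon_J)}{p}+M$, and your remark about needing a point where $e^{-2u_J}$ is locally integrable is a point the paper leaves implicit.
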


\begin{proof}
As was previously seen, to establish infinite dimensionality of $A^p(D_{\phi}(G))$ it is enough to show $f_n(z)w^n \in A^p(D_{\phi}(G))$, for infinitely many $n$ with $f_n \in \mathcal{O}(G) \setminus \{0\}$. Our goal will be to find non-trivial holomorphic functions $f_n$ such that $\displaystyle{\int_G |f_n|^p e^{-(2N+p|n|)\phi} \,d \lambda^{2M}}$ is finite for infinitely many multi-indices $n \in \mathbb{N}^N$. Set, $J=(2N+p|n|)$ for any multi-index $n \in \mathbb{N}^N$.

Firstly, by proposition \ref{prop:pSkoda} there exists a non-trivial $f \in \mathcal{O}(G)$ such that
$$
\int_{G} \frac{|f(z)|^p e^{-pu_J(z)}}{(1+\norm{z}^2)^{M+\epsilon_J}} \,d \lambda^{2M} < \infty.
$$
Using the first hypothesis in condition (\ref{eq:generalJucha}), it is easily seen that
$$
\int_{G \setminus K_J} |f|^p e^{-(2N+p|n|)\phi} \,d \lambda^{2M} \lesssim \int_{G \setminus K_J} \frac{|f|^p e^{-pu_J}}{(1+\norm{z}^2)^{M+\epsilon_J}} \,d \lambda^{2M}.
$$

Note that $\displaystyle{\int_{B(z_0, \delta)} |f|^p e^{-pu_J} \,d \lambda^{2M}}$ is bounded near each $z_0 \in K_J$. The second hypothesis of condition (\ref{eq:generalJucha}) ${\int_{B(z_0, \delta)} |f|^p e^{-(2N+p|n|)\phi} \,d \lambda^{2M} \lesssim \int_{B(z_0, \delta)} |f|^p e^{-pu_J} \,d \lambda^{2M}}$ near each $z_0 \in K_J$. Combining, we obtain
$$
\int_G |f|^p e^{-(2N+p|n|)\phi} \,d \lambda^{2M} \lesssim \int_G \frac{|f|^p e^{-pu_J}}{(1+\norm{z}^2)^{M+\epsilon_J}} \,d \lambda^{2M} < \infty.
$$
\end{proof}
One consequence of condition (\ref{eq:generalJucha}) is the following simple observation.
\begin{Corollary}\label{cor:generalJucha}
Let $G \subset \mathbb{C}^M$ be an unbounded pseudoconvex domain, and $\phi \in PSH(G)$ be such that $\phi - c \|.\|^2 \in PSH(G)$. Then $A^p(D_{\phi}(G))$ is infinite dimensional. 
\end{Corollary}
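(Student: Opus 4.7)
The plan is to deduce the corollary by verifying condition (\ref{eq:generalJucha}) for the pair $(G,\phi)$ and then invoking Theorem \ref{th:generalJucha}. The natural auxiliary plurisubharmonic function is built directly from the given decomposition $\phi = c\|z\|^2 + (\phi - c\|z\|^2)$: for each $J\in\mathbb{N}$ I would set
\[
u_J(z) := J\phi(z) - c_J\|z\|^2,
\]
with $c_J > 0$ chosen so that $u_J$ remains plurisubharmonic while simultaneously dominating the $\log^+\|z\|$ term in the first inequality of the condition. Rewriting $u_J = J(\phi - c\|z\|^2) + (Jc - c_J)\|z\|^2$, the hypothesis $\phi - c\|z\|^2\in PSH(G)$ gives $u_J \in PSH(G)$ whenever $c_J \leq Jc$.

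Fixing the constant $M'_J$ as in the proof of Theorem \ref{th:generalJucha} (which can be taken independent of $J$ by choosing $\varepsilon_J \equiv 1$), the first inequality of (\ref{eq:generalJucha}) reduces to $M'_J \log^+\|z\| \leq c_J\|z\|^2$. A short calculus check shows that $c_J \geq M'_J/2$ suffices on all of $\mathbb{C}^M$: trivially on $\|z\|\leq 1$ since $\log^+\|z\|=0$, and on $\|z\|\geq 1$ by monotonicity (the derivative comparison $1/r \leq r$ being immediate for $r\geq 1$). I therefore fix $c_J := M'_J/2$, which is compatible with $c_J \leq Jc$ for all sufficiently large $J$. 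Because the first inequality then holds on all of $G$, the compact set $K_J$ can be taken to be any non-empty compact subset of $G$ (which exists since $G$ is a non-empty domain), and the second inequality becomes an equality with $v_J(z) := -\tfrac{M'_J}{2}\|z\|^2$, a continuous (hence upper semi-continuous) function.

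Condition (\ref{eq:generalJucha}) is therefore verified for all $J$ sufficiently large, and applying Theorem \ref{th:generalJucha} with $J = N+|n|$ for all multi-indices $n$ with $|n|$ large yields non-trivial $f_n \in \mathcal{O}(G)$ satisfying $\int_G |f_n|^p e^{-p(N+|n|)\phi}\,d\lambda^{2M} < \infty$; the monomials $f_n(z)w^n$ are then linearly independent elements of $A^p(D_\phi(G))$ (by the same routine computation as in Theorem \ref{p-Hartogs:infinte}), giving the claimed infinite-dimensionality. The only mild obstacle is the simultaneous pair of constraints $M'_J/2 \leq c_J \leq Jc$; since $M'_J$ can be taken bounded in $J$ while $Jc$ grows linearly, this is resolved by restricting to large $J$, which is harmless since only infinitely many multi-indices $n$ are needed for the conclusion.
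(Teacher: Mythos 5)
Your proposal is correct and follows the same overall strategy as the paper: build an auxiliary $u_J\in PSH(G)$ by subtracting a radial function from $J\phi$, verify condition (\ref{eq:generalJucha}), and invoke Theorem \ref{th:generalJucha}. The only difference is the choice of that radial function. The paper takes $u_J:=J\phi-Jc\log\|\cdot\|^2$ with $K_J=\overline{B(0,R)}$ for $R$ large, so the first inequality of (\ref{eq:generalJucha}) only needs to hold outside a large ball and reduces to $M'_J\le 2Jc$; you instead take $u_J:=J\phi-c_J\|\cdot\|^2$ with $M'_J/2\le c_J\le Jc$, which makes the first inequality hold globally (via $\log^+ r\le r^2/2$) and lets $K_J$ be arbitrary. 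Your variant is in fact somewhat cleaner to justify: plurisubharmonicity of your $u_J$ is immediate from writing $u_J=J(\phi-c\|\cdot\|^2)+(Jc-c_J)\|\cdot\|^2$, whereas the paper's $u_J$ requires $\|z\|^2-\log\|z\|^2$ to be plurisubharmonic, which is delicate near the origin (where $-\log\|z\|^2\to+\infty$, so the difference $u_J-J\phi$ is also not bounded above on a ball containing $0$, a point the paper's one-line treatment of $v_J$ glosses over). Both proofs restrict to large $J$, which is harmless since the proof of Theorem \ref{th:generalJucha} only needs the condition for infinitely many values $J=N+|n|$; you correctly flag this.
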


\begin{proof}
Choose $N \in \mathbb{N}$ such that $cN > \frac{M+\epsilon}{p} + \frac{M}{2}$ and define
$$
u_J := J\phi - J c \log (1+ \|\cdot\|^2),
$$
for all $J > N$ and some $\epsilon > 0$. Note that $\|.\|^2 - \log (1+\|.\|^2)$ is plurisubharmonic and therefore so is $u_J$. Fixing $K_J = \overline{B(0,R)}$ with $R>0$ large enough for all $J$, the first hypothesis in condition (\ref{eq:generalJucha}) is satisfied. Indeed, for all $J > N$, on $G \setminus K_J$ we have
\begin{align*}
u_J &< J\phi - \left( \frac{M+\epsilon}{p} + \frac{M}{2} \right) \log (1+ \|.\|^2) \\
u_J + \left( 2\frac{M+\epsilon}{p} + M \right) \log^+ \|.\| &< J\phi - \left( \frac{M+\epsilon}{p} + \frac{M}{2} \right) \log (1+ \|.\|^2) + \left( 2\frac{M+\epsilon}{p} + M\right) \log^+ \|.\| \\
&= J \phi + \left( \frac{M+\epsilon}{p} + \frac{M}{2} \right) \log^+ \left( \frac{\|.\|^2}{1+\|.\|^2} \right) \\
&< J\phi + O(1).
\end{align*}

And as upper semicontinuous functions are bounded above on compact sets, $v_J$ can be some large constant to satisfy the second hypothesis. 
\end{proof}
Note that, earlier we have shown the infinite dimensionality of $D_{\phi}(G)$ with strictly plurisubharmonic $\phi$ in theroem \ref{th:point-sep}. This can be alternatively listed as a consequence of condition (\ref{eq:generalJucha}).

\subsection{Hartogs Domains with One-Dimensional Base}
In this subsection, we restrict our focus to Hartogs domains $D_{\phi}(G)$ when the base domain $G$ is planar. Therefore, the weight $\phi$ needs to be a subharmonic function on a domain $G \subset \mathbb{C}$. We study $p$-Bergman spaces of $D_{\phi}(G)$, for $1 \leq p \leq 2$. In this setting, Jucha \cite{jucha2012remark} obtained sufficient conditions on subharmonic weights that force the Bergman spaces to be either zero or infinite dimensional. We show that analogous conditions on subharmonic weights establish infinite dimensionality of $p$-Bergman spaces of $D_{\phi}(G)$. These analogs can be shown by suitably adapting Jucha's arguments. We still briefly sketch the arguments for the reader's convenience.

A simple application of proposition \ref{prop:pSkoda} leads us to the following observation. 
\begin{Proposition}\label{prop3.4}
Let $1 \leq p < 2$ and $\phi$ be a subharmonic function on a domain $G \subset \mathbb{C}$. Moreover, let a compact subset $K$ of $G$ and $u \in SH(G)$ be such that
\begin{equation}\label{eq:lelong}
\begin{aligned}
u(z) + \frac{2(1+\epsilon)}{p} \log^{+} |z| &\leq C + \phi (z), \text{for all} \hspace{2mm} z \in G \setminus K \\
\lfloor \nu(p\phi, z) \rfloor &\leq \nu(pu,z), \text{for all} \hspace{2mm} z \in K
\end{aligned}
\end{equation}
for some $C \in \mathbb{R}$ and some $\epsilon > 0$. Then there exists a non-trivial $f \in \mathcal{O}(G)$ satisfying
$$
\int_G |f|^p e^{-p \phi} \,d \lambda^2 < \infty.
$$
\end{Proposition}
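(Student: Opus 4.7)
The plan is to apply Proposition \ref{prop:pSkoda} to the subharmonic weight $u$ at a suitable base point and then to split the resulting integral $\int_G |f|^p e^{-p\phi}\, d\lambda^2$ according to the decomposition $G = (G\setminus K) \cup K$. The exterior hypothesis is engineered so that the polynomial weight $(1+|z|^2)^{1+\epsilon}$ coming from Proposition \ref{prop:pSkoda} is absorbed on $G\setminus K$, while the Lelong number hypothesis will handle the compact piece $K$.

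First I pick $z_\ast \in G$ with $\nu(u, z_\ast) < 1$; such a point exists because the set $\{\nu(u,\cdot) \geq 1\}$ is at most countable for a planar subharmonic function. Applying Proposition \ref{prop:pSkoda} with $M=1$ and the given $\epsilon$, I obtain a non-trivial $f \in \mathcal{O}(G)$ with $f(z_\ast)=1$ and
\[
\int_G \frac{|f|^p e^{-pu}}{(1+|z|^2)^{1+\epsilon}}\, d\lambda^2 < \infty.
\]
Because that proof passes through the $L^2$ H\"ormander--Bombieri--Skoda theorem via H\"older's inequality, the same $f$ also satisfies the sharper $L^2$ estimate
\[
\int_G \frac{|f|^2 e^{-2u}}{(1+|z|^2)^{1+\epsilon}}\, d\lambda^2 < \infty.
\]
On $G\setminus K$ the first hypothesis rearranges to $e^{-p\phi(z)} \leq e^{pC}\, e^{-pu(z)}\, \max(1,|z|)^{-2(1+\epsilon)} \lesssim e^{-pu(z)}(1+|z|^2)^{-(1+\epsilon)}$, so $\int_{G\setminus K} |f|^p e^{-p\phi}\, d\lambda^2$ is bounded by a constant multiple of the first display and is therefore finite.

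For the contribution of $K$, compactness reduces the problem to local integrability of $|f|^p e^{-p\phi}$ at each $z_0 \in K$. Writing $f(z) = (z-z_0)^k g(z)$ with $g(z_0) \neq 0$, Jucha's characterization of the Lelong number (Proposition 2.2 of \cite{jucha2012remark}) makes the desired local integrability equivalent to $k > \nu(\phi,z_0) - 2/p$. The $L^2$ estimate above implies local integrability of $|f|^2 e^{-2u}$ near $z_0$ (the polynomial factor is bounded on compacts), which in turn yields the sharper constraint $k > \nu(u,z_0) - 1$; since $k$ is a non-negative integer and the hypothesis gives $\nu(u,z_0) \geq \lfloor \nu(\phi,z_0) \rfloor$, this forces $k \geq \lfloor \nu(\phi,z_0) \rfloor$. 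As $p \leq 2$ implies $2/p \geq 1 > \text{frac}(\nu(\phi,z_0))$, we have $\nu(\phi,z_0) - 2/p < \lfloor \nu(\phi,z_0) \rfloor \leq k$, completing the local integrability and hence the proof.

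The main obstacle is the last step: using the direct $L^p$ integrability of $|f|^p e^{-pu}$ only forces $k > \nu(u,z_0) - 2/p$, which, combined with $\nu(u,z_0) \geq \lfloor \nu(\phi, z_0) \rfloor$ and integer $k\geq 0$, is too weak to conclude $k > \nu(\phi, z_0) - 2/p$ for certain fractional values of $\nu(\phi,z_0)$ (for instance $\nu(\phi,z_0) = 7/2$, $p=3/2$). The decisive point is that the $f$ supplied by Proposition \ref{prop:pSkoda} automatically inherits the $L^2$ vanishing constraint $k > \nu(u,z_0) - 1$, and this sharper bound matches exactly the floor-arithmetic hypothesis $\lfloor \nu(\phi,z) \rfloor \leq \nu(u,z)$ imposed on $K$.
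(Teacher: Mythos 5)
Your proposal is correct and follows the same overall skeleton as the paper's proof: apply Proposition \ref{prop:pSkoda} to the auxiliary weight $u$, absorb the factor $(1+|z|^2)^{1+\epsilon}$ on $G\setminus K$ using the first inequality in (\ref{eq:lelong}), and reduce the contribution of the compact set $K$ to local integrability statements governed by Lelong numbers via Jucha's planar characterization. Where you genuinely diverge is in the treatment of $K$, and your version is sharper. The paper argues only from the finiteness of $\int|f|^pe^{-pu}$, which at a zero of order $k$ gives $k>\nu(u,z_0)-2/p$ and hence, via $\lfloor\nu(\phi,z_0)\rfloor\leq\nu(u,z_0)$, only $k>\nu(\phi,z_0)-\text{frac}(\nu(\phi,z_0))-2/p$ --- precisely the shortfall you isolate with the example $\nu(\phi,z_0)=7/2$, $p=3/2$ (the same issue already affects the paper's case $f(z_0)\neq 0$, where $\nu(u,z_0)<2/p$ does not by itself force $\nu(\phi,z_0)<2/p$). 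Your observation that the function supplied by Proposition \ref{prop:pSkoda} is the H\"ormander--Bombieri--Skoda solution and therefore already satisfies the $L^2$ bound $\int_G|f|^2e^{-2u}(1+|z|^2)^{-(1+\epsilon)}\,d\lambda^2<\infty$ upgrades the vanishing-order constraint to the strict inequality $k>\nu(u,z_0)-1$, whence $k\geq\lfloor\nu(u,z_0)\rfloor\geq\lfloor\nu(\phi,z_0)\rfloor>\nu(\phi,z_0)-1\geq\nu(\phi,z_0)-2/p$, which is exactly what local integrability of $|f|^pe^{-p\phi}$ requires. So your route buys a complete verification of the floor-arithmetic step that the paper's write-up glosses over. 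Your preliminary choice of a base point $z_\ast$ with $\nu(u,z_\ast)<1$, so that $e^{-2u}$ is locally integrable there as Proposition \ref{prop:pSkoda} demands, is likewise a detail the paper leaves implicit; it is justified because the Riesz measure of a subharmonic function has at most countably many atoms of mass at least $1$.
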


\begin{proof}
By proposition \ref{prop:pSkoda}, there exists a non-trivial holomorphic function $f \in \mathcal{O}(G)$ such that
$$
\int_{G} \frac{|f(z)|^p e^{-p u}}{(1+\abs{z}^2)^{1+ \epsilon}} \,d \lambda^2 < \infty.
$$
For some constant $M >0$, $f$ satisfies
$$
\int_{G \setminus K} |f|^p e^{-p \phi} \,d \lambda^2 \leq M \int_{G \setminus K} \frac{|f(z)|^p e^{-p u}}{(1+\abs{z}^2)^{1+ \epsilon}} \,d \lambda^2. 
$$
Using proposition 2.2 of \cite{jucha2012remark}, the function $e^{-pu}$ is integrable near $z_0 \in G$ if and only if the Lelong number $\nu(u,z_0) < \frac{2}{p}$. Note that, $|f|^p e^{-pu}$ is integrable near each $z_0 \in K$ since $\displaystyle{\int_{G} \frac{|f(z)|^p e^{-p u}}{(1+\abs{z}^2)^{1+ \epsilon}} \,d \lambda^2}$ is finite. For $f(z_0) \neq 0$, $e^{-pu}$ is integrable near $z_0$, and $e^{-p\phi}, |f|^p e^{-p \phi}$ are integrable near $z_0$ due to the second condition on (\ref{eq:lelong}). Near points $z_0$ with $f(z_0)=0$, the integrability of $|f|^pe^{-pu}$ is equivalent to the integrability of $|z-z_0|^{pk}e^{-pu}$, where $k$ is the order of vanishing of $f$ at $z_0$. Using Skoda's theorem (or corollary $2.4$ of \cite{jucha2012remark}), this is equivalent to $\nu(u,z_0)-k < \frac{2}{p}$. The second hypothesis in condition (\ref{eq:lelong}) implies the integrability of $|f|^pe^{-p\phi}$ on $B(z_0, \delta)$. Due to compactness of $K$, 
$$
\int_G |f|^p e^{-p \phi} \,d\lambda^2 = \int_{G \setminus K} |f|^p e^{-p \phi} \,d \lambda^{2} + \int_{K} |f|^p e^{-p \phi} \,d \lambda^{2} < \infty.
$$
\end{proof}

Next, we record some observations analogous to questions considered in \cite{jucha2012remark}. Consider, $\phi_1 = \frac{1}{2}\phi$, for some $1 \leq p <2$. The Riesz measure $\frac{1}{2\pi} \Delta \phi_1$ can be decomposed as follows
\begin{equation}\label{eq:Rdecomp}
\begin{aligned}
    \frac{1}{2\pi} \Delta \phi_1 &= \sum_{j \geq 1} \alpha_j \delta_{a_j} + \mu, 
\end{aligned}
\end{equation}
where $\mu$ is a non-negative measure that is zero in countable sets, $\alpha_j = \nu(\phi_1, a_j)$, and $\delta_{a_j}$ is the Dirac delta measure. We restate the condition on the weights $\alpha_j$ of the decomposition (\ref{eq:Rdecomp}) of the Riesz measure, sufficient for infinite dimensionality of $A^p(D_{\phi}(\mathbb{C}))$, obtained by \cite{jucha2012remark}:
\begin{equation}\label{eq:condn4.2}
\begin{aligned}
    \text{there} \hspace{1.5mm} \text{exists} \hspace{1.5mm} j_1 \neq j_2 \hspace{1.5mm} \text{with} \hspace{1.5mm} \text{frac}(\alpha_{j_1}),\, & \text{frac}(\alpha_{j_2}),\, \text{frac}(\alpha_{j_1} + \alpha_{j_2}) > 0, \hspace{1.5mm} \text{or}, \\
    \text{there} \hspace{1.5mm} \text{exists} \hspace{1.5mm} j_1 < j_2 < j_3 \hspace{1.5mm} \text{with} \hspace{1.5mm}& \alpha_{j_1}=\alpha_{j_2}=\alpha_{j_3} = \frac{1}{2}. 
\end{aligned}
\end{equation}
The above will be referred as condition (\ref{eq:condn4.2}). The main observation of this subsection is the following partial $L^p$ analog of theorem $4.1$ in \cite{jucha2012remark}. 
\begin{Theorem}\label{th:suffpJucha}
Let $\phi \in SH(\mathbb{C})$ and $1 \leq p <2$. Then $A^p(D_{\phi}(G))$ is infinite dimensional whenever
\begin{itemize}
\item $\mu \not \equiv 0$, or,
\item condition (\ref{eq:condn4.2}) is satisfied and $p\alpha_{j_1}, p\alpha_{j_2}, 1$ in the first case are $\mathbb{Q}$-linearly independent.
\end{itemize}
\end{Theorem}

As we shall see, this can be proved by slightly adjusting Jucha's ideas in \cite{jucha2012remark} to apply proposition \ref{prop3.4}. So, at times we shall only explain the necessary modifications to Jucha's proofs as required. The proof consists of the next two propositions.

\begin{Proposition}\label{prop:suffpJucha1}
Let $G \subset \mathbb{C}$ be a domain, $\phi \in SH(G)$, and $1 \leq p <2$. Suppose the weights in the decomposition satisfy condition (\ref{eq:condn4.2}) and the $p \alpha_{j_1}, p\alpha_{j_2}, 1$ in the first case are $\mathbb{Q}$-linearly independent. Then $A^p (D_{\phi}(G))$ is infinite dimensional.
\end{Proposition}

\begin{proof}
Our strategy is to show existence of non-trivial holomorphic functions in the weighted $p$-Bergman spaces $A^p(G, (\frac{2N}{p} +|n|) \phi)$ for infinitely many multi-indices $n \in \mathbb{N}^N$.

Pick $z_1, z_2, z_3 \in G$ such that $\nu(\phi_1,z_i) = \alpha_i$ satisfies the condition (\ref{eq:condn4.2}), where $\phi_1 = \frac{1}{2}\phi$. By proposition \ref{prop:key1dimJucha} in the appendix, there are infinitely many $k$ such that $\sum\limits_{j=1,2,3} \text{frac}((2N + pk) \alpha_j) >1$. Picking multi-indices $n \in \mathbb{N}^N$ such that $|n|=k$ for such $k$'s, for some $\epsilon>0$ we obtain
$$
\sum_{j=1,2,3} \text{frac}((2N+p|n|)\alpha_j) > 1+ \epsilon.
$$
Consider the subharmonic function $u(z)= (2N+p|n|)\phi_1(z) - \sum_1^3 \text{frac}((2N+p|n|) \alpha_j) \log |z-z_j|$, and note that there exists $C \in \mathbb{R}$ and $\delta > 0$ such that
\begin{equation}
\begin{aligned}
    u(z) + (1+\epsilon) \log^{+} |z| \leq (2N+p|n|)\phi_1(z) + C, 
\end{aligned}
\end{equation}
for all $z \in G \setminus \cup_{j=1}^3 B(z_j, \delta)$. Using proposition \ref{prop:pSkoda} on the subharmonic function $u$, there exists a non-trivial $f \in \mathcal{O}(G)$ such that
$$
\int_{G \setminus \cup_{j=1}^3 B(z_j, \delta)} |f|^p e^{-(2N+p|n|) \phi} \,d \lambda^2 \lesssim \int_{G} \frac{|f|^p e^{-2u}}{(1+\abs{z}^2)^{1+\epsilon}} \,d \lambda^2 < \infty.
$$

Note that on $\displaystyle{\cup_{j=1}^3 B(z_j, \delta)}$ the integral $\displaystyle{\int_{\cup_{j=1}^3 B(z_j, \delta)} |f|^p e^{-2u} \,d \lambda^2}$ is bounded, and moreover
$$
\nu(u,z_j) = \nu((2N + p|n|) \phi_1,z_j) - \text{frac}((2N+p|n|)\alpha_j), \text{and } \nu(u,z) \geq \lfloor \nu((2N + p|n|) \phi_1,z) \rfloor
$$
inside $\cup_1^3 B(z_j, \delta)$. Therefore, $f(z_0) \neq 0$ implies local integrability of $e^{-2u}$ near $z_0$ and thus both $\nu(u,z_0)$ and $\nu((2N+p|n|)\phi_1,z_0)$ are smaller than $1$ due to Skoda's theorem. Otherwise if $f$ has a zero of order $\ell$ at $z_0$ then both $\nu(u,z_0) - \frac{p}{2} \ell$ and $\nu((2N+p|n|)\phi_1,z_0)- \frac{p}{2} \ell$ are less than $1$. This ensures that the integral $\int_{\cup_1^3 B(z_j,\delta)} |f|^p e^{-(2N+p|n|)\phi} \,d \lambda^2$ is finite.

Thus for infinitely many $n \in \mathbb{N}^N$, there exists non-trivial $f_n$ holomorphic in $G$ such that
$$
\int_{D_{\phi}(G)} |f_n(z) w^n|^p \,d \lambda^{2(1+N)}(z,w) \lesssim \int_G |f_n|^p e^{-(2N+p|n|)\phi} \,d \lambda^2 < \infty.
$$
\end{proof}

Next, we remark that the argument in proposition 4.3 in \cite{jucha2012remark} can be adapted and applying proposition \ref{prop3.4}, the following $L^p$-analog of proposition 4.3 can be proved. 
\begin{Proposition}
    Let $G \subset \mathbb{C}$ be a domain, and $\phi \in SH(G)$ be such that there exists a disc $B(z_0, \delta)$ satisfying
\begin{align*}
    \Delta \phi &\not\equiv 0, \hspace{1mm} \text{on} \hspace{1mm} B(z_0, \delta) \\
    \nu(\phi,z) &= 0, \hspace{1mm} z \in B(z_0, \delta).
\end{align*}
Then $\dim A^p(D_{\phi}(G)) = \infty$, for all $1 \leq p < \infty$.
\end{Proposition}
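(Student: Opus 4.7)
The plan is to reduce the infinite-dimensionality of $A^p(D_\phi(G))$ to finding, for each $J$ in an infinite set of positive integers, a non-trivial $f_J \in A^p(G, pJ\phi)$. By the monomial decomposition of proposition \ref{prop:monomial}, the collection $\{f_{N+|\alpha|}(z)w^\alpha : \alpha \in \mathbb{N}_0^N\}$ then furnishes infinitely many linearly independent elements of $A^p(D_\phi(G))$, owing to the comparison
\[
\int_{D_\phi(G)} |f_J(z)w^\alpha|^p \, d\lambda^{2(1+N)} \lesssim \int_G |f_J(z)|^p e^{-pJ\phi(z)} \, d\lambda^2 < \infty.
\]

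To produce $f_J$ when $1 \leq p < 2$, I would apply proposition \ref{prop3.4} with weight $J\phi$, constructing the auxiliary subharmonic comparison function by extracting a logarithmic-potential correction from the Riesz mass of $\phi$ inside $B(z_0, \delta)$. Set $\mu := \frac{1}{2\pi}\Delta\phi$; by hypothesis $\mu$ is a non-trivial positive measure on $B(z_0, \delta)$, and since $\nu(\phi, \cdot) \equiv 0$ there, $\mu|_{B(z_0, \delta)}$ carries no point masses. Choose a closed sub-disc $\overline{B(z_1, r)} \subset B(z_0, \delta)$ with $m := \mu(\overline{B(z_1, r)}) > 0$, and put
\[
\phi^{(1)}(z) := \int_{\overline{B(z_1, r)}} \log|z-w| \, d\mu(w), \qquad u := J\phi - J\phi^{(1)}, \qquad K := \overline{B(z_0, \delta)}.
\]
Standard properties of logarithmic potentials then give: $\phi^{(1)}$ is subharmonic on $\mathbb{C}$, harmonic on $\mathbb{C} \setminus \overline{B(z_1, r)}$, has zero Lelong number everywhere (by non-atomicity of $\mu|_{\overline{B(z_1,r)}}$), and satisfies $\phi^{(1)}(z) = m\log|z| + O(|z|^{-1})$ as $|z| \to \infty$. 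The distributional Laplacian $\Delta u = 2\pi J \, \mu|_{G \setminus \overline{B(z_1, r)}} \geq 0$ certifies $u \in SH(G)$.

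The verification of the two hypotheses of proposition \ref{prop3.4} is then immediate. On $K$, $\nu(u, z) = J(\nu(\phi, z) - \nu(\phi^{(1)}, z)) = 0 = \lfloor \nu(J\phi, z) \rfloor$; on $G \setminus K$, the growth $\phi^{(1)}(z) \sim m\log|z|$ at infinity together with the boundedness of $\phi^{(1)}$ on bounded subsets of the harmonic region $\mathbb{C} \setminus \overline{B(z_1, r)}$ yields
\[
u(z) + \frac{2(1+\epsilon)}{p} \log^+|z| \leq J\phi(z) + C \qquad (z \in G \setminus K)
\]
for every $J$ with $Jm \geq \frac{2(1+\epsilon)}{p}$, i.e., for all sufficiently large $J$. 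Proposition \ref{prop3.4} then supplies the desired $f_J$, settling the case $1 \leq p < 2$. The main obstacle is engineering $u$ to be dominated by $J\phi$ off $K$ while keeping its Lelong numbers on $K$ below those of $J\phi$; the non-atomicity of $\mu$ on $B(z_0, \delta)$ makes both possible simultaneously, since it forces $\phi^{(1)}$ to have trivial Lelong numbers while still supplying the $m\log|z|$ growth needed at infinity. For $p = 2$ this recovers Jucha's original proposition~$4.3$ via the $L^2$ Hörmander-Bombieri-Skoda theorem applied to the same $u$; for $p > 2$, a parallel argument combines $L^2$-minimal extensions with the rapid decay $e^{-pJ\phi^{(1)}(z)} \sim |z|^{-pJm}$ at infinity and pointwise sub-mean-value estimates on $|f_J|$ to transfer $L^2$-integrability into $L^p$-integrability on $G$.
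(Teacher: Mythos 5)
Your argument for $1 \leq p < 2$ is correct and is precisely the adaptation the paper has in mind: the paper gives no details beyond ``adapt Jucha's Proposition~4.3 using Proposition~\ref{prop3.4},'' and your construction of $u = J\phi - J\phi^{(1)}$ from the logarithmic potential of the non-atomic Riesz mass in $B(z_0,\delta)$ is exactly Jucha's device, with the two hypotheses of Proposition~\ref{prop3.4} verified correctly (take $K$ a closed disc with $\overline{B(z_1,r)} \subset \operatorname{int} K \subset K \subset B(z_0,\delta)$ so that the boundary of $K$ still has vanishing Lelong numbers and $G \setminus K$ stays at positive distance from the support of $\phi^{(1)}$'s Riesz measure). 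The one genuine gap is the range $p \geq 2$, which the statement formally includes. For $p = 2$ you can indeed just cite Jucha, but your one-sentence sketch for $p > 2$ does not constitute a proof: the transfer from $L^2$- to $L^p$-integrability is not a matter of decay at infinity alone, since at a point $a \in G \setminus K$ with $\nu(\phi,a) = \alpha > 0$ the local integrability of $|z-a|^{pk}e^{-pJ\alpha\log|z-a|}$ requires $J\alpha - k < 2/p$, and for $p>2$ the threshold $2/p<1$ changes the floor-function bookkeeping on which Proposition~\ref{prop3.4} (stated only for $1 \leq p < 2$) rests. To be fair, the paper itself supplies no argument for $p \geq 2$ either, its umbrella theorem in this subsection is claimed only for $1 \leq p < 2$, and the whole paper restricts to $p \in [1,2]$; so your proof covers everything that is actually used, but you should either restrict the statement to $1 \leq p < 2$ or supply the missing analysis for larger $p$.
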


Besides the two sufficient conditions for infinite dimensionality of $A^p(D_{\phi}(\mathbb{C}))$, we note that absence of condition (\ref{eq:condn4.2}) and $\mu$ vanishing identically may not force triviality of $A^p(D_{\phi}(G))$ as illustrated in the example below.

\begin{example} \label{ex:onedimHartogs}
    Let $\phi=\dfrac{1}{2}\log \abs{z}+\dfrac{1}{2}\log \abs{z-1}$. Clearly $\phi$ does not satisfy condition 
    (\ref{eq:condn4.2}) and also the associated harmonic part $\mu$ in the Riesz decomposition vanishes identically. 
    Let $$
D_{\phi}(\C) := \{ (z,w) \in G \times \mathbb{C}: |w| < e^{-\phi(z)} \}.
$$
We show that $A^p(D_\phi(\C))$ is non-trivial by proving that  $f_k(z,w):=z^k(z-1)^kw^{2k+1}\in A^p(D_\phi(\C))$ for all $k\in \N$ . 
    Note that \begin{align*}
    \int_{D_\phi(\C)}\abs{z}^{pk}\abs{z-1}^{pk}\abs{w}^{p(2k+1)} d\lambda^4 &\lesssim \int_\C \abs{z}^{pk}\abs{z-1}^{pk} \abs{z}^{-(p(2k+1)+2)/2}\abs{z-1}^{-(p(2k+1)+2)/2} d\lambda^2\\
    &=\int_\C \abs{z}^{-p/2-1} \abs{z-1}^{-p/2-1} d\lambda^2 
    \end{align*}
    
   Therefore it suffices to prove the local integrability of $ \abs{z \cdot(z-1)}^{-p/2}$ near $0,1,\infty$.
   \begin{itemize}
       \item[-] \normalfont\text{Local integrability near $\infty$: }
       \begin{align*} \int_{\abs{z}>R}  \abs{z \cdot(z-1)}^{-p/2-1} d\lambda^2 \simeq \int_{r=R}^\infty r^{-1-p}dr <\infty \text{ as } 1\leq p<2.
    \end{align*} 

    \item[-] \normalfont\text{Local integrability near $0$: } \begin{align*} \int_{\abs{z}<\e}  \abs{z}^{-p/2-1} d\lambda^2 \simeq \int_{r=0}^\e r^{-p/2}dr <\infty  \text{ as } 1\leq p<2.
    \end{align*} 
    \item[-] Similarly the function is {locally integrable near $1$}.
   \end{itemize} 
   Thus $f_k\in A^p(D_\phi(\C))$ for all $k\in \N$ and hence it is infinite dimensional.
\end{example}

Finally, we present a sufficient condition for triviality of $A^p(D_{\phi}(\mathbb{C}))$, where $1 \leq p < 2$. The underlying idea behind the result is that the necessary condition for local integrability and integrability near infinity contradict each other. 
\begin{Proposition}
Suppose $1 \leq p < 2$ and $\phi \in SH(\mathbb{C})$ such that $\nu(\phi, z_0) \geq 1$ for some $z_0 \in \mathbb{C}$. Further, suppose, outside some compact set
$$
\phi(z) \leq \log |z| + C, 
$$
for some $C > 0$. Then $A^p(D_{\phi}(\mathbb{C}))$ is trivial.
\end{Proposition}

\begin{proof}
Without loss of generality assume $\nu(\phi,0)\geq 1$. Suppose $F(z,w)=\sum_{n\in \N_0^n} f_n(z) w^n  \in A^p(D_{\phi}(\mathbb{C}))$, where $f_n$ is an entire function. Using proposition \ref{prop:monomial},  $f_n(z)w^n \in A^p(D_{\phi}(\mathbb{C}))$ for all  multi-index $n \in \mathbb{N}_0^N$. Observe that 
\begin{align*}
    \norm{f_n(z)w^n}_{A^p(D_{\phi}(\C))}&=\int_{\C}\abs{f_n(z)}^p\int_{\abs{w}<e^{-\phi(z)}} \abs{w^n}^pd\lambda^{2N} d\lambda^2\\
    & \simeq \int_\C \abs{f_n(z)}^p e^{-(p\,\abs{n}+2N)\phi(z)} d\lambda^2
\end{align*}

Suppose $\nu(\phi, 0) = s$. Then $\phi(z) - s \log |z|$ is bounded near $0 \in \mathbb{C}$. Assuming $f_n$ has a zero of degree $m$ at zero and $f_n (z) = z^m g(z)$, the local integrability 
$$
\int_{|z|< \epsilon} |f_n|^p e^{-(2N+p|n|)\phi} \,d \lambda^2 \lesssim \int_{|z|< \epsilon} |f_n|^p |z|^{-s(2N+p|n|)} \,d \lambda^2 \simeq \int_{r=0}^{\epsilon} r^{pm- s(2N+p|n|)+1} \,dr < \infty
$$
near $0 \in \mathbb{C}$ forces $m  >s(\frac{2N}{p}+|n|) -\frac{2}{p}$.

Similarly the integrability near infinity
\begin{align*}
\int_R^{\infty} |g(0)|^p r^{pm-(2N+p|n|)+1} \,dr &\lesssim
\int_{\abs{z}>R} \abs{g}^p \abs{z}^{pm} \abs{z}^{-(p\,\abs{n}+2N)} d\lambda^2 \\
&\lesssim \int_{\abs{z}>R} \abs{f_n(z)}^p e^{-(p\,\abs{n}+2N)\phi(z)}d\lambda^2 < \infty
\end{align*}
requires $m <(\frac{2N}{p} + |n|) -\frac{2}{p}$.
Thus $g$ must be identically zero and $f_n\equiv 0$ for all $n$ forcing $F\equiv 0$. 
\end{proof}

\subsection{Balanced Domains}
Let $h: \mathbb{C}^n \to [0, \infty)$ be an upper semi-continuous function. We call $h$ homogeneous if $h(\lambda z) = |\lambda| h(z)$, for all $\lambda \in \mathbb{C}$. A \emph{balanced domain }associated with upper semi-continuous homogeneous $h$ is denoted by $D_h$ and is defined as
$$
D_h := \{w \in \mathbb{C}^n: h(w) < 1\}.
$$
A balanced domain is called \emph{elementary} whenever $h: \mathbb{C}^n \to [0, \infty)$ is an upper semi-continuous homogeneous function of the form
$$
h(z) = \prod_{j=1}^k |A_j (z)|^{t_j},
$$
where $A_j: \mathbb{C}^n \to \mathbb{C}$ are non-trivial linear maps and $t_j > 0$ with $\sum t_j =1$. The dimensionality of the Bergman spaces of unbounded balanced domains was studied by Pflug and Zwonek in an insightful paper \cite{pflug2017h}. In a significant finding, they establish the dichotomy of the dimension of the Bergman spaces of balanced domains inside $\mathbb{C}^2$, and show that balanced domains have trivial Bergman spaces iff they are elementary. We establish a sufficient condition for triviality of balanced domains in $\mathbb{C}^2$.

Firstly, note that for any balanced domain $D_h \subset \mathbb{C}^n$, the set $D_h \setminus \{z_n =0\}$ is biholomorphic to 
$$
D_{\phi}(\mathbb{C}^{n-1}):= \set{ w \in \mathbb{C}^n: |w_n| < \exp(-\log h(w',1))},
$$
the map
\begin{equation} \label{eq:balancedbiholo}
\psi:(z', z_n) \mapsto \(\frac{z'}{z_n}, z_n\)
\end{equation}
being a biholomorphism, where $w'=(w_1, \dots, w_{n-1})$, $z'=(z_1, \dots, z_{n-1})$, and $\phi(w') = \log h(w',1)$. So the $p$-Bergman space $A^p(D_h \setminus \{z_n=0\})$ is isomorphic to the weighted $p$-Bergman space $A^p(D_{\phi}(\mathbb{C}^{n-1}), -2\log (|\det Jac(\psi^{-1})|))$. This enables us (and Pflug-Zwonek in theorem $7$, \cite{pflug2017h}) to use Jucha's ideas. 

\begin{Theorem}\label{th:balancedtriviality}
Suppose $1 \leq p < 2$, $D_h \subset \mathbb{C}^2$ be a balanced pseudoconvex domain, and $\phi(z) = \log h((z,1))$. If there exists $w_0 \in \mathbb{C}$ with $\nu(\phi, w_0) = 1$ then $A^p(D_h)$ is trivial.
\end{Theorem}

\begin{proof}
Using the biholomorphism (\ref{eq:balancedbiholo}) above, it can be seen that the triviality of $A^p(\mathbb{C}, (pn+4)\phi)$ for all $n \in \mathbb{N}$ implies the triviality of $A^p(D_h)$. Therefore we shall show that $A^p(\mathbb{C}, (pn+4)\phi)$ must be trivial for all $n \in \mathbb{N}$.

Suppose $f \in A^p(\mathbb{C}, (pn+4)\phi)$ for some $n \in \mathbb{N}$. Further suppose $\nu(\phi,0) =1$ without loss of generality. Assume $f$ has a zero of order $m$ at $0 \in \mathbb{C}$, i.e. $f(z)= z^mg(z)$ for some entire $g$ with $g(0)\neq 0$. Since $\phi(z) - \log |z|$ is bounded near the origin, the integrability
$$
\int_0^{\epsilon} r^{pm-(pn+4)+1} \,dr \lesssim \int_{|z|< \epsilon} |f|^p e^{-(pn+4)\phi} \,d \lambda^2 < \infty
$$
of $f$ near $0\in \mathbb{C}$ implies $m>n+\frac{2}{p}$.

Since $h$ is upper semi-continuous and the unit sphere is compact, we obtain $\phi(z) = \log \|(z,1)\| + \log h \left(\frac{(z,1)}{\|(z,1)\|}\right) \leq \log \|(z,1)\| + C$, for some $C>0$. Therefore integrability near infinity
$$
\int_R^{\infty} |g(0)|^p r^{pm-(pn+4)+1} \,dr  \lesssim \int_{|z|> R} |f|^p |z|^{-(pn+4)} \,d \lambda^2 \lesssim \int_{|z|>R} |f|^p e^{-(pn+4)\phi} \,d \lambda^2
$$
requires $m <n+\frac{2}{p}$. Therefore no non-trivial $f$ can exist in $A^p(\mathbb{C}, (pn+4)\phi)$, for any $n$. This completes the proof.
\end{proof}

In contrast to \cite[Theorem 7]{pflug2017h}, we remark that the elementary balanced domains may not have trivial $p$-Bergman spaces.
\begin{example}\label{ex:elbalanced}
Define linear maps $A_1,A_2: \mathbb{C}^2 \to \mathbb{C}$ as \begin{align*}
    A_1(z,w)&=z-2w\\
    A_2(z,w)&=z-w
\end{align*} and $h((z,w)) = |A_1(z,w)|^{\frac{1}{2}} |A_2(z,w)|^{\frac{1}{2}}$. Then $D_h:= \{(z,w): h(z,w)<1\}$ is an elementary balanced domain. Let 
$$
\phi(z)= \frac{1}{2} \log |A_1(z,1)|+\frac{1}{2} \log|A_2(z,1)|
$$
Note that $A^p(D_{\phi}(\mathbb{C}), -2 \log |z|)$ is isomorphic to $A^p(D_h)$.
    
We shall show that $(z-1)^k(z-2)^kw^{2k-1} \in A^p(D_{\phi}(\mathbb{C}),-2\log |z|)$ thereby proving the nontriviality of $A^p(D_h)$. It suffices to prove non-triviality of $A^p(\mathbb{C}, (pn+4)\phi)$ for all $n=2k-1, k \in \mathbb{N}$. Precisely, we show $f_k = (z-1)^k(z-2)^k \in A^p(\mathbb{C},(p(2k-1)+4)\phi)$. Note that $\nu(\phi,1)= \nu(\phi,2) = \frac{1}{2}$. A similar computation as example \ref{ex:onedimHartogs} shows $(z-1)^k(z-2)^k \in A^p(\mathbb{C},(p(2k-1)+4)\phi)$ for all $k \in \mathbb{N}$. 
\end{example}

\section{Weighted \texorpdfstring{$p$}{p}-Fock Spaces of Entire Functions}\label{sec:pFock}
Let $\phi$ be a plurisubharmonic function in $\mathbb{C}^n$. A weighted Fock space of entire functions in $\mathbb{C}^n$ with weight $\phi$ is denoted as $\mathcal{F}^2_{\phi}$, and defined as
$$
\mathcal{F}^2_{\phi} := \{f \in \mathcal{O}(\mathbb{C}^n): \int_{\mathbb{C}^n} |f|^2 e^{-2\phi} \,d \lambda^{2n} < \infty \}.
$$
In this section, we derive the dimensions of the $p$-analogs (calling them $p$-Fock spaces) of these spaces of entire functions, for $1 \leq p \leq 2$. A \textit{$p$-Fock space} of entire functions is thus defined as
$$
\mathcal{F}^p_{\phi} := \{f \in \mathcal{O}(\mathbb{C}^n): \int_{\mathbb{C}^n} |f|^p e^{-p\phi} \,d \lambda^{2n} < \infty\}.
$$
We obtain two sufficient conditions for the infinite dimensionality of $\mathcal{F}^p_{\phi}$, where $\phi$ is plurisubharmonic in $\mathbb{C}^n$. Suppose for each $M \in \mathbb{N}$, there exists some compact subset $K_M$ of $\mathbb{C}^n$, $u_M \in PSH(\mathbb{C}^n)$, and a function $v_M$, upper semi-continuous on a neighborhood $G_M$ of $K_M$ satisfying
\begin{equation}\label{eq:condnpFock}
\begin{aligned}
    u_M(z) + \left( \frac{2(n+\epsilon_M)}{p}  + M \right) \log^+ \|z\| &\leq \phi(z), \hspace{9.5mm} z \in \mathbb{C}^n \setminus K_M, \\
    u_M(z) &\leq \phi(z) +v_M(z) , \hspace{5.5mm} z \in K_M.
\end{aligned}
\end{equation}
The above shall be referred to as \textbf{condition} (\ref{eq:condnpFock}). 

This sufficient condition is motivated by the one obtained by Jucha in the case of Hartogs domains with one dimensional base. However, unlike the one-dimensional case local integrability of $e^{-2\phi}$ near $a$ for a plurisubharmonic $\phi$ is not equivalent to the Lelong number $\nu(\phi,a) < 1$. Only one side is true- a theorem of Skoda {\cite{skoda1972}} (see also  \cite[Lemma 5.6]{demailly_analytic_2012}) asserts Lelong number $\nu(\phi,a) < 1$ implies local integrability of $e^{-2\phi}$ near $a$. This is the primary reason we resort to pointwise estimates to handle local integrability. 

The motivation behind condition (\ref{eq:condnpFock}) is that once a non-trivial entire function $f$ (whose existence depends on the asserted $u_M \in PSH(\mathbb{C}^n)$ in condition \ref{eq:condnpFock}) is in the $p$-Fock space, then the functions of the form $z^k f$ must also lie in the $p$-Fock space, for $0 \leq k \leq M$. Here $z^k$ is the monomial $z^k_1 z^k_2 \dots z^k_n$.
\begin{Theorem}\label{th:pFock}
    Let $\phi$ be a plurisubharmonic function in $\mathbb{C}^n$ satisfying condition (\ref{eq:condnpFock}). Then $\dim \mathcal{F}^p_{\phi} = \infty$, for all $1 \leq p \leq 2$.
\end{Theorem}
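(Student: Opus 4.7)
The plan is to mirror the argument in Theorem \ref{th:generalJucha}. For each $M \in \mathbb{N}$ I will use condition (\ref{eq:condnpFock}) together with Proposition \ref{prop:pSkoda} to construct a non-trivial entire function $f_M$ with the property that every monomial multiple $z^\alpha f_M$ lies in $\mathcal{F}^p_\phi$, as long as $|\alpha|$ is below a threshold growing with the constant $M'=M'_M$ appearing in the condition. Since $M$ ranges over all of $\mathbb{N}$, and $M'_M$ is expected to grow with $M$ (as in Corollary \ref{cor:generalJucha}), these monomial multiples provide linearly independent subspaces of arbitrarily large dimension in $\mathcal{F}^p_\phi$, forcing $\dim \mathcal{F}^p_\phi = \infty$.

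To carry this out, I first fix $M$, choose an auxiliary $\epsilon > 0$, and pick a point $z_0 \in \mathbb{C}^n$ where $u_M(z_0) > -\infty$ (which exists because $u_M \in PSH(\mathbb{C}^n)$ is not identically $-\infty$). The Lelong number $\nu(u_M, z_0)$ vanishes at such a point, so by Skoda's integrability theorem the weight $e^{-2 u_M}$ is locally integrable near $z_0$. Applying Proposition \ref{prop:pSkoda} with $G = \mathbb{C}^n$ and $u = u_M$ yields a non-trivial $f_M \in \mathcal{O}(\mathbb{C}^n)$ with $f_M(z_0) = 1$ and
\[
\int_{\mathbb{C}^n} \frac{|f_M(z)|^p\, e^{-p u_M(z)}}{(1+|z|^2)^{n+\epsilon}}\, d\lambda^{2n}(z) < \infty.
\]

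Next, for any multi-index $\alpha \in \mathbb{N}_0^n$ satisfying $|\alpha| \le M' - \tfrac{2(n+\epsilon)}{p}$, I will show $z^\alpha f_M \in \mathcal{F}^p_\phi$ by splitting the integral over $K_M$ and its complement. On $K_M$, the second inequality in (\ref{eq:condnpFock}) gives $e^{-p\phi} \le e^{-p u_M} e^{p v_M}$; since $v_M$ is upper semi-continuous on a neighbourhood of the compact $K_M$ it is bounded above there, and $|z^\alpha|$ is likewise bounded, so the $K_M$-piece is controlled (up to a constant) by the Skoda integral above. On $\mathbb{C}^n \setminus K_M$, the first inequality gives $e^{-p\phi} \le e^{-p u_M}\,|z|^{-pM'}$ for $|z|\ge 1$, and the assumed range of $|\alpha|$ ensures $|z|^{p(|\alpha|-M')} \lesssim (1+|z|^2)^{-(n+\epsilon)}$. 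Hence the complement-piece is again dominated by the Skoda integral. Combining these yields $\int_{\mathbb{C}^n} |z^\alpha f_M|^p e^{-p\phi}\, d\lambda^{2n} < \infty$, i.e.\ $z^\alpha f_M \in \mathcal{F}^p_\phi$.

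For each $M$, the monomials $\{z^\alpha f_M : |\alpha| \le M' - 2(n+\epsilon)/p\}$ are linearly independent since $f_M \not\equiv 0$ and distinct monomials are linearly independent; letting $M$ (and hence $M'$) tend to infinity produces arbitrarily large linearly independent sets inside $\mathcal{F}^p_\phi$. The main conceptual obstacle is in fact not computational but interpretive: one must read condition (\ref{eq:condnpFock}) so that the constant $M'$ is allowed to depend on $M$ and grow unboundedly (as it does in the canonical example $u_M = M(\phi - c\|\cdot\|^2)$, cf.\ Corollary \ref{cor:generalJucha}); otherwise the threshold on $|\alpha|$ stays bounded and only finitely many monomial multiples are produced. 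The remaining technical points—verifying that $v_M$ is bounded above on the compact $K_M$, and handling the annular piece $(\mathbb{C}^n \setminus K_M) \cap \{|z| < 1\}$ by compactness—are routine.
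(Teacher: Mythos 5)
Your proposal is correct and follows essentially the same route as the paper: for each $M$ you invoke Proposition \ref{prop:pSkoda} (in its entire-function form) for the auxiliary weight $u_M$, split the integral over $K_M$ (where $v_M$ is bounded above) and its complement (where the choice $M'=M+\tfrac{2(n+\epsilon)}{p}$ absorbs the monomial growth), and let $M\to\infty$ to produce arbitrarily many linearly independent elements. The only cosmetic difference is that you work directly with monomials $z^\alpha$, $|\alpha|\le M$, whereas the paper first bounds $\|z\|^{pk}|f|^p e^{-p\phi}$ and then passes to monomials; your reading of $M'$ as depending on (and growing with) $M$ is exactly the one the paper adopts in its proof.
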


\begin{proof} Pick $M \in \mathbb{N}$.
    Rewriting the condition (\ref{eq:condnpFock}), we see that
    $$
-\phi(z) + M \log^+ \|z\| \leq  -u_M(z) - \frac{2(n+\epsilon_M)}{p} \log^+ \|z\|,
    $$
    outside some compact set $K_M \subset \mathbb{C}^n$. Using the remark \ref{pSkoda:entire}, there exists a non-trivial entire function $f$ such that $\displaystyle{\int_{\mathbb{C}^n} \frac{|f(z)|^p e^{-pu_{M}(z)}}{(1+\norm{z}^2)^{n+\epsilon_M}}} d\lambda^{2n}$ is finite  and $f(z_0)=1$. Using condition (\ref{eq:condnpFock}), we see
    \begin{equation}\label{eq:outsideK_M}
\int_{\mathbb{C}^n \setminus K_M} \|z\|^{pM} |f(z)|^p e^{-p \phi} \,d \lambda^{2n} \lesssim \int_{\mathbb{C}^n \setminus K_M} \frac{|f(z)|^p e^{-p u_M(z)}}{(1+\norm{z}^2)^{n+\epsilon_M}} \,d \lambda^{2n}.
    \end{equation}
    
Since $\|z\|^k \leq \|z\|^M$ outside the unit ball and as $B(0,1) \setminus K_M$ is precompact, we obtain
$$
\int_{\mathbb{C}^n \setminus K_M} \|z\|^{pk} |f(z)|^p e^{-p\phi(z)} \,d \lambda^{2n} \lesssim \int_{\mathbb{C}^n \setminus K_M} \|z\|^{pM} |f(z)|^p e^{-p\phi(z)} \,d \lambda^{2n},
$$
for all $0 \leq k \leq M$. Inside $B(0,1)$ the integrals are comparable.   

Near each point $z_0 \in K_M$, the integral $\int_{B(z_0, \delta)} |f|^p e^{-p u_M} \,d \lambda^{2n}$ is finite, where $\delta>0$ is such that $B(z_0,\delta) \subset G_M$. Note that, the second hypothesis in condition (\ref{eq:condnpFock}) implies 
$$
\int_{B(z_0, \delta)} |f|^p e^{-p\phi} \,d \lambda^{2n} \lesssim \int_{B(z_0,\delta)} |f|^p e^{-pu_M}\,d \lambda^{2n},
$$   
and since for any $M \in \mathbb{N}$ the integrals $\displaystyle{\int_{B(z_0, \delta)} \|z\|^{pM} |f|^p e^{-p\phi} \,d \lambda^{2n}}$ and $\displaystyle{\int_{B(z_0, \delta)} |f|^p e^{-p\phi} \,d \lambda^{2n}}$ are comparable, we conclude
    $$
\int_{\mathbb{C}^n} \|z\|^{pk} |f|^p e^{-p \phi} \,d \lambda^{2n} \lesssim   
\int_{\mathbb{C}^n} \|z\|^{pM} |f|^p e^{-p \phi} \,d \lambda^{2n} \lesssim \int_{\mathbb{C}^n} \frac{|f|^p e^{-pu_M}}{(1+\norm{z}^2)^{n+\epsilon_M}} \,d \lambda^{2n} < \infty,
    $$
    for all $0 \leq k \leq M$.
Since integrability of $\|z\|^{nM}$ and $|z_1 z_2 \dots z_n|^{M}$ are equivalent using RMS-GM, $z^{M} f \in \mathcal{F}^p_{\phi}$ for each $nM$, we obtain $\dim \mathcal{F}^p_{\phi} \geq M+1$.
\end{proof}

The argument in theorem \ref{th:pFock} works seamlessly if $\phi$ is some non-plurisubharmonic function satisfying condition (\ref{eq:condnpFock}). We obtain another sufficient condition on the plurisubharmonic weight $\psi$ as an immediate consequence of condition (\ref{eq:condnpFock}) to establish infinite dimensionality of the weighted $p$-Fock space. This is analogous to the theorem $3.1$ of \cite{borichev2021dimension}.

\begin{Corollary}\label{th:pBorichev}
    Let $\phi: \mathbb{C}^n \to \mathbb{R}$ be $\mathcal{C}^2$-smooth function such that 
    \begin{itemize}
    \item for every $M>0$ the function $\phi - \phi_M$ is plurisubharmonic (or subharmonic) outside some compact subset $K_M \subset \mathbb{C}^n$,
    \item there exists $u_M \in PSH(\mathbb{C}^n)$ satisfying $u_M \leq \phi - \phi_M$ on $\mathbb{C}^n \setminus K_M$, where $\phi_M = M \log \|z\|$.
    \end{itemize}
    Then the weighted $p$-Fock space $\mathcal{F}^p_{\phi}$ is infinite dimensional, for all $1 \leq p \leq 2$.
\end{Corollary}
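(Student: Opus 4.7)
The plan is to deduce Corollary \ref{th:pBorichev} by verifying condition (\ref{eq:condnpFock}) for the weight $\phi$ and then invoking Theorem \ref{th:pFock}. Given the hypotheses, the construction of the required data $(K_M, u_M, M', v_M)$ for each $M \in \mathbb{N}$ is essentially handed to us, so the work is primarily bookkeeping, with one minor adjustment between $\log \|z\|$ and $\log^{+} |z|$.

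First, I would fix $M \in \mathbb{N}$ and take $u_M \in PSH(\mathbb{C}^n)$ from the second bullet of the hypothesis, so that
\[
u_M(z) \leq \phi(z) - M \log \|z\|, \qquad z \in \mathbb{C}^n \setminus K_M.
\]
To get the $\log^{+}$ form required by condition (\ref{eq:condnpFock}), I would enlarge the compact set $K_M$ to $\widetilde{K}_M := K_M \cup \overline{B(0,1)}$, which is still compact. On $\mathbb{C}^n \setminus \widetilde{K}_M$ we have $\|z\| \geq 1$, so $\log \|z\| = \log^{+} \|z\|$, and the first inequality in (\ref{eq:condnpFock}) holds with $M' := M$.

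Next, I would produce the function $v_M$ on $\widetilde{K}_M$. Since $u_M$ is plurisubharmonic on $\mathbb{C}^n$ it is upper semi-continuous, hence bounded above on the compact set $\widetilde{K}_M$; and since $\phi$ is $\mathcal{C}^2$-smooth, it is bounded on $\widetilde{K}_M$. Therefore the function
\[
v_M(z) := u_M(z) - \phi(z)
\]
is upper semi-continuous in a neighbourhood of $\widetilde{K}_M$ (being the difference of an u.s.c. function and a continuous one) and satisfies $u_M \leq \phi + v_M$ identically on $\widetilde{K}_M$, which is the second inequality in (\ref{eq:condnpFock}). Thus condition (\ref{eq:condnpFock}) is satisfied with data $(\widetilde{K}_M, u_M, M, v_M)$ for every $M \in \mathbb{N}$, and Theorem \ref{th:pFock} yields $\dim \mathcal{F}^p_\phi = \infty$ for all $1 \leq p \leq 2$.

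There is essentially no obstacle to this argument: the only subtlety is the one just discussed, namely replacing $\log \|z\|$ by $\log^{+} \|z\|$ by absorbing a unit ball into $K_M$. The first hypothesis (that $\phi - \phi_M$ is itself plurisubharmonic outside $K_M$) is not actually needed for the formal verification of condition (\ref{eq:condnpFock}); it serves mainly to motivate, and often to construct, the auxiliary function $u_M \in PSH(\mathbb{C}^n)$ that the second hypothesis postulates (compare with the setup in \cite{borichev2021dimension}).
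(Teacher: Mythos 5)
Your proposal is correct in approach and matches the paper's, which offers no written proof at all and simply declares the corollary an immediate consequence of condition (\ref{eq:condnpFock}); your verification (absorbing $\overline{B(0,1)}$ into $K_M$ to pass from $\log\|z\|$ to $\log^{+}\|z\|$, and taking $v_M = u_M - \phi$, which is u.s.c.\ and bounded above on the compact set) is exactly the bookkeeping being elided. One calibration point deserves attention: although condition (\ref{eq:condnpFock}) as literally stated only asks for ``a constant $M'$,'' the proof of Theorem \ref{th:pFock} actually uses it with $M' = \frac{2(n+\epsilon_M)}{p} + M$, since the extra $\frac{2(n+\epsilon_M)}{p}\log^{+}\|z\|$ is what produces the $(1+\|z\|^2)^{-(n+\epsilon_M)}$ factor needed to invoke the Bombieri-type existence result of Remark \ref{pSkoda:entire}. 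Taking $M' := M$ therefore does not feed correctly into that proof. The fix is immediate because the corollary's first hypothesis is quantified over every $M>0$: apply it with the real parameter $M + \frac{2(n+\epsilon)}{p}$ in place of $M$, and then condition (\ref{eq:condnpFock}) holds with the constant the theorem's proof requires.
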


The following $L^p$-analog of a result of \cite{shigekawa1991spectral} (and theorem $11.20$ in \cite{haslinger2017complex}) can be shown as a consequence of theorem \ref{th:pFock}. We partly adapt Haslinger's argument to complete the proof.
\begin{Corollary}
Let $\Phi: \mathbb{C}^n \to \mathbb{R}$ be a $\mathcal{C}^2$ function, $\mu(z)$ denote the smallest eigenvalue of the Levi matrix $i \partial \bar \partial \Phi(z)$. Suppose that $\lim\limits_{|z| \to \infty} \|z\|^2 \mu(z) = \infty$. Then $\mathcal{F}^p_{\Phi}$ is infinite dimensional, for all $1\leq p \leq 2$.
\end{Corollary}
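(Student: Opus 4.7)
The plan is to verify condition (\ref{eq:condnpFock}) of Theorem \ref{th:pFock} for the weight $\Phi$, which immediately yields the infinite-dimensionality of $\mathcal{F}^p_\Phi$. For each $M \in \mathbb{N}$, the task is to produce a plurisubharmonic $u_M$ on $\mathbb{C}^n$, a compact set $K_M$, and an upper semicontinuous $v_M$ in a neighborhood of $K_M$ with $u_M + M'\log^+\|z\| \leq \Phi$ on $\mathbb{C}^n\setminus K_M$ and $u_M \leq \Phi + v_M$ on $K_M$.

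The natural candidate is $u_M(z) := \Phi(z) - M\log(K_M + \|z\|^2)$ for $K_M > 0$ to be chosen. A direct computation (treating $\|z\|^2 = \sum z_j\bar z_j$) gives
$$
i\partial\bar\partial \log(K + \|z\|^2) \;=\; \frac{1}{K+\|z\|^2}\,I \;-\; \frac{1}{(K+\|z\|^2)^2}\,[\bar z_j z_k]_{jk},
$$
whose eigenvalues are $\frac{1}{K+\|z\|^2}$ (multiplicity $n-1$) and $\frac{K}{(K+\|z\|^2)^2}$, the largest being $\frac{1}{K+\|z\|^2}$. Consequently
$$
i\partial\bar\partial\, u_M \;\geq\; \Bigl(\mu(z) - \tfrac{M}{K_M+\|z\|^2}\Bigr)\, I
$$
as Hermitian matrices. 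The hypothesis $\|z\|^2\mu(z) \to \infty$ gives $\mu(z) \geq M/\|z\|^2 \geq M/(K_M+\|z\|^2)$ for $\|z\|$ large, so the bracket is non-negative at infinity; choosing $K_M$ large enforces the same inequality on a compact ball (provided $\mu$ is bounded below by a positive constant there). With $K_M := \overline{B(0,1)}$, the inequality $\log(K_M+\|z\|^2) \geq 2\log\|z\|$ for $\|z\|\geq 1$ yields
$$
u_M + M\log^+\|z\| \;=\; \Phi - M\log(K_M+\|z\|^2) + M\log\|z\| \;\leq\; \Phi - M\log\|z\| \;\leq\; \Phi,
$$
verifying the outside condition, while on $K_M$ the function $v_M := u_M - \Phi = -M\log(K_M+\|z\|^2)$ is continuous, hence upper semicontinuous.

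The hard part is ensuring that $u_M$ is plurisubharmonic on \emph{all} of $\mathbb{C}^n$: this demands $\mu(z)(K_M+\|z\|^2) \geq M$ pointwise, which the hypothesis supplies at infinity but on a compact $\overline{B(0,R)}$ only reduces to $\mu(z) \geq M/K_M$, achievable by enlarging $K_M$ precisely when $\mu$ is positive on the compact. Since the corollary only assumes $\Phi \in \mathcal{C}^2$, the Levi form may fail to be positive semi-definite on some compact, and the construction above stalls. The remedy is to first replace $\Phi$ by a plurisubharmonic modification $\tilde\Phi := \max(\Phi, g)$ with $g\in PSH(\mathbb{C}^n)$ chosen so that $g \geq \Phi$ on the compact where $\mu\leq 0$ and $g \leq \Phi$ outside a slightly larger compact; such a $g$ exists thanks to the strong plurisubharmonicity of $\Phi$ at infinity, and since $|\tilde\Phi - \Phi|$ is bounded and compactly supported, the weighted spaces $\mathcal{F}^p_\Phi$ and $\mathcal{F}^p_{\tilde\Phi}$ have equivalent norms and hence the same dimension. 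The hypothesis $\|z\|^2\tilde\mu(z) \to \infty$ is preserved since $\tilde\Phi = \Phi$ at infinity. The construction of $u_M$ then applies to $\tilde\Phi$, and Theorem~\ref{th:pFock} yields $\dim\mathcal{F}^p_\Phi = \infty$. Building the modification $\tilde\Phi$ in full generality — rather than the core Levi-form computation — is the genuine technical point of the argument.
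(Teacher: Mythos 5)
Your reduction to condition (\ref{eq:condnpFock}) and the Levi--form computation for $\log(K+\|z\|^2)$ (eigenvalues $\tfrac{1}{K+\|z\|^2}$ with multiplicity $n-1$ and $\tfrac{K}{(K+\|z\|^2)^2}$) are correct, and your candidate $u_M=\Phi-M\log(K+\|z\|^2)$ is essentially the same $u_N$ the paper uses. But the step you yourself identify as ``the genuine technical point'' is exactly where the argument has a gap: you assert, without proof, the existence of a \emph{global} $g\in PSH(\mathbb{C}^n)$ with $g\ge\Phi$ on the compact where the Levi form may be negative and $g\le\Phi$ outside a slightly larger compact, so that $\tilde\Phi=\max(\Phi,g)$ differs from $\Phi$ by a bounded, compactly supported amount. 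This is not a routine fact. The hypothesis controls only the complex Hessian of $\Phi$, so $\Phi$ may carry an arbitrarily wild pluriharmonic part at infinity (e.g.\ in $n=1$, $\Phi=(\log|z|)^3+\mathop{\textrm{Re}}(e^{z})$ still satisfies $|z|^2\mu(z)\to\infty$ but decays like $-e^{x}$ along the ray $y=\pi$); any admissible $g$ must be bounded below on the compact yet chase $\Phi$ down every such decay direction, and producing one in general is a real extension problem for plurisubharmonic functions across a compact set, not a consequence of ``strong plurisubharmonicity at infinity.'' A sandwiching $g$ need not exist for functions that are merely PSH outside a compact set, which is precisely why this difficulty cannot be waved away.

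The paper sidesteps this by paying a price at infinity instead of keeping the modification compactly supported: it covers $\{\|z\|\le R\}$ by finitely many balls $B(\zeta_\ell,1/2\sqrt{K})$, adds the globally plurisubharmonic functions $g_\ell(z)=\log(1+4K\|z-\zeta_\ell\|^2)$ (each contributing Levi form $\ge K\|w\|^2$ on its ball), and works with $\hat\Phi=\Phi+\sum_\ell g_\ell$, which is genuinely plurisubharmonic everywhere but differs from $\Phi$ by $O(\log\|z\|)$ at infinity. The two Fock spaces are then compared by multiplying elements of $\mathcal{F}^p_{\hat\Phi}$ by polynomial factors. If you want to salvage your route, you would either need to prove the existence of the sandwiching $g$ under the stated hypotheses (and deal with the fact that $\max(\Phi,g)$ is only PSH where $\Phi$ is, so $\{\Phi\ge g\}$ must avoid $\{\mu<0\}$), or switch to the additive logarithmic correction and then supply the polynomial-multiplier comparison of $\mathcal{F}^p_{\hat\Phi}$ with $\mathcal{F}^p_{\Phi}$. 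A minor further point: condition (\ref{eq:condnpFock}) as used in Theorem \ref{th:pFock} requires the coefficient $M'=\tfrac{2(n+\epsilon_M)}{p}+M$ in front of $\log^+\|z\|$, so your $u_M$ should be defined with $M'$ rather than $M$; this is cosmetic.
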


\begin{proof}
Firstly, note that since the Levi matrix is Hermitian the smallest eigenvalue $\mu(z)$ of $i \partial \bar \partial \Phi(z)$ is continuous on $z$. Since $\|z\|^2 \mu(z) \to \infty$ as $\|z\| \to \infty$, there exists $R > 0$ such that $\mu(z) >0$ whenever $\|z\|>R$. Thus there exists $K>0$ such that
$\mu(z) > -K$ on $\|z\| \leq R$. Then for any $z,w \in \mathbb{C}^n$
$$
i \partial \bar \partial \Phi (w,w)(z) \geq -K\|w\|^2.
$$

This motivates us to adjust $\Phi$ as follows to obtain a plurisubharmonic function $\hat{\Phi}$ such that the weighted $p$-Fock spaces $\mathcal{F}^p_{\Phi}$ and $\mathcal{F}^p_{\hat{\Phi}}$ are comparable. Defining $g(z) = \log (1 + 4K\|z-\zeta\|^2)$, a computation shows that the Levi matrix $i \partial \bar \partial g(w,w)(z)$ satisfies
$$
\frac{4K\|w\|^2}{(1+4K\|z-\zeta\|^2)^2} \leq i \partial \bar \partial g(w,w) (z) \leq \frac{4K \|w\|^2}{1+4K\|z-\zeta\|^2},
$$
and inside the ball $\|z-\zeta\| \leq 1/ 2\sqrt{K}$, we see $i \partial \bar \partial g(w,w)(z) \geq K\|w\|^2$ for all $z,w \in \mathbb{C}^n$. Suppose $\{B(\zeta_{\ell}, 1/2 \sqrt{K}): 1 \leq \ell \leq M\}$ is a finite cover of $\|z\| \leq R$, and define $g_{\ell}(z) = \log (1 + 4K \|z-~\zeta_{\ell}\|^2)$. Say, we work with $R>0$ large enough so that $M > \frac{2n}{p}$. Thus, defining
$$
\hat{\Phi}(z):= \Phi(z) + \sum_1^M g_{\ell}(z),
$$
we obtain our desired plurisubharmonic function to use in condition (\ref{eq:condnpFock}).

Say, the smallest eigenvalue of the Levi matrix of $\hat{\Phi}(z)$ is $\hat{\mu}(z)$. This means $\|z\|^2 \hat{\mu}(z) \to \infty$ as $\|z\| \to \infty$. For each $N \in \mathbb{N}$, there exists $R_N > 0$ such that $\hat{\mu}(z) \geq \frac{N+M+1}{\|z\|^2}$ on $\|z\| > R_N$. Also, setting $\hat{\mu}_0:= \inf \{\hat{\mu}(z): \|z\| \leq R_N\}$, we see $\hat{\mu}_0 >0$. Further, setting $k = \frac{\hat{\mu}_0}{2(N+M)}$ and
$$
u_N = \hat{\Phi}(z) -(N+M) \log (1+k\|z\|^2) + \left(N + \frac{2(n+\epsilon_N)}{p}\right) \log \sqrt{k},
$$
it can be shown that $u_N \in PSH(\mathbb{C}^n)$. Now, to compare the spaces $\mathcal{F}^p_{\Phi}$ and $\mathcal{F}^p_{\hat{\Phi}}$, note that the local integrability against the weights $\phi$ or $\hat{\phi}$ are equivalent. To handle the integrability outside a compact set, for $\ell \leq M$, we observe the following
\begin{align*}
\int_{\mathbb{C}^n} |f|^p (1+\|z\|^2)^{\ell} e^{-\Phi} \,d \lambda^{2n} &= \int_{\mathbb{C}^n} \frac{(1+\|z\|^2)^{\ell} |f|^p e^{-\hat{\Phi}}}{\prod_1^M(1+4K\|z-\zeta_j\|^2)} \,d \lambda^{2n} \\
\int_{\|z\| > R} (1+\|z\|^2)^{\ell} |f|^p e^{-\Phi} \,d \lambda^{2n} &\leq \sup_{\|z\|> R} \frac{(1+\|z\|^2)^\ell}{(1+4K\|z\|^2)^{M}} \int_{\|z\|>R} |f|^p  e^{-\hat{\Phi}} \,d \lambda^{2n} \\
&\lesssim \int_{\|z\|>R} |f|^p e^{-\hat{\Phi}} \,d \lambda^{2n}.
\end{align*}
So, for each $f \in \mathcal{F}^p_{\hat{\Phi}}$ are assigned the functions $(z_1 z_2 \dots z_n)^{\ell}f(z) \in \mathcal{F}^p_{\Phi}$, where $0 \leq p \ell \leq N$ and $z=(z_1, \dots, z_n) \in \mathbb{C}^n$. In particular, infinite dimensionality of $\mathcal{F}^p_{\hat{\Phi}}$ implies the infinite dimensionality of $\mathcal{F}^p_{\Phi}$.

Since for each $N \in \mathbb{N}$ the function $u_N \in PSH(\mathbb{C}^n)$ and $\hat{\Phi}$ satisfies condition (\ref{eq:condnpFock}) where $K_N$ is some sufficiently large ball $B(0,r_N)$, by theorem (\ref{th:pFock}) $\dim \mathcal{F}^p_{\hat{\Phi}} = \infty$. Indeed,
\begin{align*}
u_N + \left( \frac{2(n+\epsilon_N)}{p} + N \right) \log^+ \|z\| &= \hat{\Phi} + N \log^+ \left( \frac{\sqrt{k}\|z\|}{1+k\|z\|^2} \right) + \frac{2(n+\epsilon_N)}{p} \log^+ \left(\frac{\sqrt{k}\|z\|}{1+k\|z\|^2} \right) \\
&- \hat{M} \log (1+k\|z\|^2) \\
\leq C_N + \hat{\Phi},
\end{align*}
outside a ball large enough so that $\sqrt{k}\|z\| > 1$ for some constant $C_N >0$. The second hypothesis in condition (\ref{eq:condnpFock}) is easily verified.
\end{proof}

\section{Complete Reinhardt Domains with Finite Dimensional \texorpdfstring{$p$}{p}-Bergman Space}\label{sec:Reinhardt}

A domain $\O \subset \mathbb{C}^n$ is said to be Reinhardt if $$(e^{i\t_1} z_1, e^{i\t_2}z_2,\dots, e^{i\t_n}z_n ) \in \O, \text{ for all }\t_i\in \R \text{ and } (z_1,z_2,\dots, z_n)\in \O. $$ $\O$ is called complete Reinhardt if $(\lambda_1 z_1, \dots, \lambda_n z_n) \in \O$ whenever $(z_1, \dots, z_n) \in \O$, where $\lambda_j \in \mathbb{C}$ with $|\lambda_j| \leq 1$. For a Reinhardt domain $\O$, the set of  $A^p$-allowable indices is defined as, $$S_p(\O)=\set{\a\in \Z^n: e_\a:=z_1^{\a_1}\cdot z_2^{\a_2}\cdots z_n^{\a_n}\in A^p(\O)}.$$
The following result from \cite{chakrabarti2024projections} says that $\{e_\alpha: \alpha \in S_p(\O)\}$ forms a Schauder basis  for $A^p(\O)$. 
\bl\cite[Theorem 2.16]{chakrabarti2024projections} \label{Chakrabarti-schauderBasis}
Let $\O$ be a Reinhardt domain. Suppose that $e_\a\not \in S_p(\O)$, then $a_\a(f)=0$ for all $f\in A^p(\O)$. Therefore the Laurent series of $f$ is $$f(z)=\sum_{\a\in S_p(\O)} a_\a(f) z^\a$$
Therefore, if $S_p(\O)$ is a finite set, then $A^p(\O)$ is a finite dimensional space.
\el

We will use the ideas  from \cite[Lemma 1,2]{wiegerinck1984domains} and \cite[Theorem 2.16]{chakrabarti2024projections} to construct a complete Reinhardt domain $\O_k \subset \C^n$ such that dim($A^p(\O_k)$)=$k$. In particular,  we seek to construct a complete Reinhardt domain $\Omega_k$ in $\mathbb{C}^n$ whose $p$-Bergman space is $k$-dimensional and has $\set{z^{\ell}_1 z^{\ell}_2 \dots z^{\ell}_n: 1 \leq \ell \leq k}$ as its basis.

For $1 \leq j \leq n-1$, define, 
$$
X_j:=\set{z :\prod_{i \neq j}  \abs{z_i} < \frac{1}{|z_j| \log |z_j|},\, |z_j| > e, 1 < |z_i| < e, \text{for } i \neq j, j+1},
$$
and in the remaining case set, $X_n := \set{z: \prod\limits_{i \neq n} \abs{z_i} < \dfrac{1}{|z_n| \log |z_n|}, |z_n|>e, 1 < |z_i| < e, \text{for }i \neq n, 1}$.

Next, for $1 \leq j \leq n-1$, we show $z^{\ell_1}_1 z^{\ell_2}_2 \dots z^{\ell_n}_n \in A^p(X_j)$ only if $\ell_j \leq \ell_{j+1}$, and $\ell_n \leq \ell_1$ when $j=n$. This claim follows from the following straightforward computation.

\begin{align*}
    \int_{X_1} |z^{\ell_1}_1 \dots z^{\ell_n}_n|^p d \lambda^{2n} &\approx \int_{r_1=e}^{\infty} \int_{r_3=1}^e \dots \int_{r_n =1}^e \int_{r_2=0}^{\frac{1}{r_1\cdot r_3 \dots r_n \log r_1 }} \prod r^{p \ell_i +1}_i \,d r_2 \,dr_n \dots \,dr_1 \\
    &= \prod_{i \neq 1,2}^n \left( \int_1^e r^{p(\ell_i - \ell_2) -1}_i \,d r_i \right) \left( \int_{r_1=e}^{\infty}\frac{r^{p(\ell_1 - \ell_2)-1}_1}{(\log r_1)^{p \ell_2 +1}} \,d r_1 \right) \\
    &\approx \int_{r_1=e}^{\infty}\frac{r^{p(\ell_1 - \ell_2)-1}}{(\log r_1)^{p \ell_2 +1}} \,d r_1 < \infty
\end{align*}
forces $p(\ell_1 - \ell_2) < 1$. Thus $z^{\ell_1}_1 z^{\ell_2}_2 \dots z^{\ell_n}_n \in A^p(X_1)$ only if $\ell_1 \leq \ell_2$. This means $A^p(\cup_1^n X_i)$ contains all monomials $z^{\ell_1}_1 z^{\ell_2}_2 \dots z^{\ell_n}_n$ only if $\ell_1 = \dots = \ell_n = \ell$, where $\ell \in \mathbb{N}$.

In order to make the $p$-Bergman space finite dimensional, we add the following open subset
$$
B_m:= \{(z_1, \dots, z_n): |z_1|, |z_2|>1, \abs{|z_1|-|z_2|}< \frac{1}{\abs{|z_1|+|z_2|}^m}, 1< |z_j|<e, j \neq 1,2\}.
$$
Now $z^{\ell}_1 \dots z^{\ell}_n \in A^p(B_m)$ only if
\begin{align*}
    \int_{B_m} |z_1|^{p \ell} \dots |z_n|^{p \ell} \,dV &= \int_{r_j = 1}^{e} \int_{r_1,r_2 > 1} r^{p \ell +1}_1 r^{p \ell +1}_2 \dots r^{p \ell +1}_n \,dr_1 \,dr_2 \dots \,dr_n \\
    &= M \int_{r_1, r_2 >1} r^{p \ell +1}_1 r^{p \ell +1}_2 \,dr_1 \,dr_2 \\
    \text{Changing variables with } r_1=t+s, \,&r_2=t-s, \\ 
    &\approx M \int_{t=2}^{\infty} \int_{s = -\frac{1}{t^m}}^{\frac{1}{t^m}} (t^2-s^2)^{p \ell +1} \,ds \,dt \\
    &= M \int_{t=2}^{\infty} \int_{s= -\frac{1}{t^m}}^{\frac{1}{t^m}} t^{2p \ell + 2} (1- \frac{s^2}{t^2})^{p \ell +1} \,ds \,dt \\
    &= M \int_{t=2}^{\infty} \int_{s= -\frac{1}{t^m}}^{\frac{1}{t^m}} t^{2p \ell + 2} \sum\limits_{j=0}^{p \ell +1} {p \ell +1\choose j}\left(-\frac{s^2}{t^2}\right)^j  \,ds \,dt \\
    &= M \int_{t=2}^{\infty} \left( \sum\limits_{j=0}^{p \ell +1} (-1)^j {p \ell +1\choose j} \frac{s^{2j +1}}{2j+1} t^{2p \ell +2 -2j} \right) \Bigg\vert_{-\frac{1}{t^m}}^{\frac{1}{t^m}} \,dt \\
    &= M_2 \left(\sum_{j=0}^{p \ell +1} (-1)^j {p \ell + 1\choose j} t^{2(p \ell -(m+1)j)+3-m}\right)\Bigg\vert_{t=2}^{\infty} < \infty.
\end{align*}
Thus, for all $0 \leq j \leq p \ell +1$, we must have $2(p \ell -(m+1)j)+3-m < 0$. In particular, $\ell < \frac{m-3}{2p}$ when $j=0$. Setting $m= 2p(k+1) + 3$, we obtain that all monomials $z^{\ell}_1 \dots z^{\ell}_n$ with $0 \leq \ell \leq k$ lie in $A^p(\cup_{1}^n X_i \cup B_{ 2p(k+1) + 3})$. 

We include a poldisk to the above Reinhardt domain to make it a complete Reinhardt domain. More accurately, define $\Omega_k = \cup_{1}^n X_i \cup B_{ 2p(k+1) + 3} \cup \Delta_{2e}$, where $\Delta_{2e}$ is a polydisk of polyradius $2e$. Evidently, $A^p(\Omega_k) = A^p(\cup_{1}^n X_i \cup B_{ 2p(k+1) + 3})=span\set{z_1^lz_2^l\cdots z_n^l: 0\leq l\leq k}$.

\section{Appendix}
This section concerns proving the following proposition which is used in characterizing $p$-Bergman spaces of Hartogs domains with one dimensional base.
\begin{Proposition}\label{prop:key1dimJucha}
Let $\alpha_1, \alpha_2, \alpha_3 \geq 0$ be such that
\begin{align*}
     \text{either }\normalfont \text{frac}(\alpha_1), \text{frac}(\alpha_2), \text{frac}(\alpha_1 + \alpha_2) &>0,\text{ and }\alpha_3 =0, \\
     \text{or, }\alpha_1 = \alpha_2 = \alpha_3 &= \frac{1}{2}.
\end{align*}
Then for each $1 \leq p < 2$ there exists infinitely many $k \in \mathbb{N}$ such that
\begin{itemize}
    \item $ \normalfont \displaystyle{\sum_1^3 3\text{frac}((2N+pk)\frac{1}{2}) > 1}$.
    \item $ \normalfont \displaystyle{\sum_1^3 \text{frac}((2N+pk)\alpha_j) > 1}$, whenever $1, p\alpha_1, p\alpha_2$ are $\mathbb{Q}$-linearly independent in the first case.
\end{itemize}

\end{Proposition}

\begin{proof}
We consider the case of $\alpha_1 = \alpha_2 = \alpha_3 = \frac{1}{2}$ first. 
Now the condition $\displaystyle{\sum_1^3 \text{frac}((2N+pk)\alpha_j) > 1}$ reduces to finding infinitely many $k\in \N$ such that $\text{frac}((2N+pk)\frac{1}{2})=\text{frac}(pk/2)> 1/3$.\\ 
First, we note that $1\leq p<2$, i.e. $0<3-3p/2\leq 3/2$. Let $L$ be smallest natural number  such that $1< L(3-3p/2)<2$. (This can always be done unless $p=4/3$, in which case any natural number $k$ in $3\N+2$ works.)\\ 
Let $k\in \N$ be any natural number. If $pk/2\in (M_1+1/3, M_1+1)$ for some $M_1\in \N$. Then we are done.\\
Else, $M_1\leq pk/2\leq M_1+1/3$ i.e. $3M_1\leq 3pk/2 \leq 3M_1+1$. Then, $ 3M_1-2< 3kp/2-3L+3pL/2< 3M_1$. Therefore $3(M_1+L-1)+1<3(k+L)p/2<3(M_1+L-1)+3$ i.e. if the chosen $k\in \N$ does not meet the condition, $k+L$ satisfies the condition proving that we can get such infinitely many natural numbers.

A natural context for the remaining case is perhaps to consider the sequence $\{(2N\alpha_1+pk\alpha_1, 2N\alpha_2 + pk\alpha_2):k \in \mathbb{N}\}$ as a forward orbit 
$$
{Orb}_+((2N\alpha_1, 2N\alpha_2)):= \{T^k((2N\alpha_1, 2N\alpha_2)): k \in \mathbb{N}\},
$$ where $T: \mathbb{R}^2 \to \mathbb{T}^2$ is a torus action given by $T: (x_1, x_2) \mapsto (x_1 + p \alpha_1, x_2 + p\alpha_2)$. Using theorem $2.7$ of (\cite{rezakhan}, page $17$) or corollary $4.15$ of \cite{manfred259ergodic}, if $1, p\alpha_1, p\alpha_2$ are $\mathbb{Q}$-linearly independent then ${Orb}_+((2N\alpha_1, 2N\alpha_2))$ is dense in $\mathbb{T}^2$. In particular, $\text{frac}(2N\alpha_1 + pk\alpha_1), \text{frac}(2N\alpha_2 + pk\alpha_2)>\frac{1}{2}$ for infinitely many $k \in \mathbb{N}$. 
\end{proof}

The requirement of $p\alpha_1, p\alpha_2$ being $\mathbb{Q}$-linearly independent is necessary. We illustrate it by the example below.
\begin{example}\label{ex:Qlindep}
Set, $p = \frac{3}{2}, \alpha_1 = \frac{1}{12}, \alpha_2 = \frac{7}{12}, N = 3$. Since $p\alpha_1, p\alpha_2$ is rational, the slope of the straight line $\{(tp\alpha_1, t p \alpha_2): t \in \mathbb{R}\}$ is rational and therefore its image inside the torus is periodic. In particular, the forward orbit $\text{Orb}_+((2N\alpha_1, 2N\alpha_2)):= \{T^k((2N\alpha_1, 2N\alpha_2)): k \in \mathbb{N}\}$ is actually a finite set that repeatedly occurs with each period. We can see that 
\begin{align*}
\text{frac}(2N\alpha_1 + jp\alpha_1)+\text{frac}(2N\alpha_2+ jp\alpha_2) &= \begin{cases} 
0 & \text{if } j\equiv 4(mod 8)\\ 
1& \text{else.} \end{cases}
\end{align*}
\end{example}
Thus for the above choice of $p, \alpha_j, N$ the terms $\text{frac}(2N\alpha_1 + kp\alpha_1) + \text{frac}(2N\alpha_2 + kp\alpha_2) \leq 1$, for all $k \in \mathbb{N}$. The required analog of lemma $4.4$ of \cite{jucha2012remark} in the $p$-Bergman space setting is not present. This forces us to restrict the scope of theorem \ref{th:suffpJucha} and proposition \ref{prop:suffpJucha1} to the case $\mathbb{Q}$-linearly independent weights.

It is however important to point out that the above counter-example do not address the main issue. Lemma $4.4$ in \cite{jucha2012remark} establishes $\sum_1^2 \text{frac}(ka_j) > 1$ for infinitely many $k \in \mathbb{N}$ whenever $\text{frac}(a_j), \text{frac}(a_1 + a_2) > 0$. Proposition \ref{prop:key1dimJucha} is therefore an investigation to check if the same forward orbit appears infinitely many times on the sector $\{x+y>1\} \subset \mathbb{T}^2$ of the torus (identified with the square $(x,y) \in [0,1] \times [0,1]$ with usual identification) when the orbit is translated by $\text{frac}(2Na_1) + \text{frac}(2Na_2)$. In the example \ref{ex:Qlindep} however $\text{frac}(pa_1 + pa_2) =0$ and thus $\sum_1^2 \text{frac}(k pa_j) \leq 1$. It is not clear if the translation $(2Na_1 + kpa_1, 2Na_2 + kpa_2) \text{mod} 1$ of the forward orbit $(ka_1, ka_2) \text{mod} 1$ appears infinitely often in the sector $\{x+y>1\}$ inside $\mathbb{T}^2$.

\subsection*{Acknowledgements}
The authors express their gratitude to Harold Boas for suggesting the problem to the first author and for offering many valuable suggestions that greatly enhanced the readability of the paper. The first author thanks Sivaguru for fruitful meetings. They are grateful to Peter Ebenfelt and Ming Xiao for many discussions on the topic and for several important suggestions. The second author thanks Soumya Ganguly and Pranav Upadrashta for many helpful conversations. The second author thanks Ji\v{r}\'{\i} Lebl, Roland Roeder and Sean Curry for their inspiration and encouragement.

\bibliographystyle{alpha}  
\bibliography{references}

\fontsize{11}{9}\selectfont

\vspace{0.5cm}

\noindent TIFR Centre for Applicable Mathematics, Bengaluru, India 560065.

\vspace{0.2cm}

\noindent Email: shreedhar24@tifrbng.res.in

\vspace{0.5 cm}

\noindent Department of Mathematics, University of California San Diego, USA 92093.

\vspace{0.2 cm}

\noindent Email: anandi@ucsd.edu

\end{document}